\def\BState{\State\hskip-\ALG@thistlm}
\algnewcommand\algorithmicinput{\textbf{Input:}}
\algnewcommand\algorithmicoutput{\textbf{Output:}}
\algnewcommand\Input{\item[\algorithmicinput]}%
\algnewcommand\Output{\item[\algorithmicoutput]}%
\newcommand{\Z}{\mathbb{Z}} 
\newcommand{\Q}{\mathbb{Q}}
\newcommand{\C}{\mathbb{C}}
\newcommand{\Aut}{\mathrm{Aut}} 
\newcommand{\stab}{\mathrm{stab}} 
\newcommand{\orb}{\mathrm{orb}} 
\newcommand{\legendre}[2]{\ensuremath{\left( \frac{#1}{#2} \right) }}
\newcommand{\GL}{\operatorname{GL}}
\newcommand{\SL}{\operatorname{SL}}
\newcommand{\lcm}{\operatorname{lcm}}
\newcommand{\Gal}{\operatorname{Gal}}
\newcommand{\End}{\operatorname{End}}
\newcommand{\jac}{\mathrm{Jac}}
\newcommand{\pic}{\mathrm{Pic}}
\newcommand{\genus}{\mathrm{genus}}
\newcommand{\Supp}{\mathrm{Supp}}
\newcommand{\im}{\operatorname{im}} 
\newcommand{\Mod}[1]{\ (\mathrm{mod}\ #1)} 
\renewcommand{\phi}{\varphi}
\NewDocumentCommand{\myrule}{O{1pt} O{3pt} O{black}}{%
  \par\nobreak 
  \kern\the\prevdepth 
  \kern#2 
  {\color{#3}\hrule height #1 width\hsize} 
  \kern#2 
  \nointerlineskip 
}
\newtheorem{theorem}{Theorem}
\newtheorem{lemma}[theorem]{Lemma}
\newtheorem{proposition}[theorem]{Proposition}
\newtheorem{remark}[theorem]{Remark}
\newtheorem{definition}[theorem]{Definition}
\newtheorem{corollary}[theorem]{Corollary}
\newtheorem{claim}{Claim}
\theoremstyle{definition}
\newtheorem{example}[theorem]{Example}
\def\@seccntformat#1{\@ifundefined{#1@cntformat}%
   {\csname the#1\endcsname\quad}  
   {\csname #1@cntformat\endcsname}
}
\let\oldappendix\appendix 
\renewcommand\appendix{%
    \oldappendix
    \newcommand{\section@cntformat}{\appendixname~\thesection\quad}
}
\algrenewcommand\algorithmicrequire{\textbf{Input:}}
\algrenewcommand\algorithmicensure{\textbf{Output:}}
\newenvironment{breakablealgorithm}
  {
   \begin{center}
     \refstepcounter{algorithm}
     \hrule height.8pt depth0pt \kern2pt
     \renewcommand{\caption}[2][\relax]{
       {\raggedright\textbf{\fname@algorithm~\thealgorithm} ##2\par}%
       \ifx\relax##1\relax 
         \addcontentsline{loa}{algorithm}{\protect\numberline{\thealgorithm}##2}%
       \else 
         \addcontentsline{loa}{algorithm}{\protect\numberline{\thealgorithm}##1}%
       \fi
       \kern2pt\hrule\kern2pt
     }
  }{
     \kern2pt\hrule\relax
   \end{center}
  }
\title{Isolated $j$-invariants arising from the modular curve $X_0(n)$ }
\author{Meghan Lee}
\date{}
\begin{document}

\newpage
\vspace{-.5cm}
\begin{abstract}
    An isolated point of degree $d$ is a closed point on an algebraic curve which does not belong to an infinite family of degree $d$ points that can be parameterized by some geometric object. We provide an algorithm to test whether a rational, non-CM $j$-invariant gives rise to an isolated point on the modular curve $X_0(n)$, for any $n \in \mathbb{Z}^+$, using key results from Menendez \cite{Zonia} and Zywina \cite{zywina}. This work is inspired by the prior algorithm of Bourdon et al. \cite{bourdon2024classification} which tests whether a rational, non-CM $j$-invariant gives rise to an isolated point on any modular curve $X_1(n)$. From the implementation of our algorithm, we determine that the set of $j$-invariants corresponding to isolated points on $X_1(n)$ is neither a subset nor a superset of those corresponding to isolated points on $X_0(n)$.
\end{abstract}
\maketitle

\section{Introduction}
The modular curve $X_1(n$) is an algebraic curve over $\Q$ whose non-cuspidal points parametrize elliptic curves  with a distinguished point of order $n$. In a seminal result in 1996, Merel \cite{merel} proved the uniform boundedness theorem which states that for a fixed degree $d$, there are no non-cuspidal degree $d$ points on $X_1(n)$ for $n$ sufficiently large. The classification of degree $d$ points on $X_1(n)$ is known for $d \leq 4$; see \cite{mazurdegleq3, kamiennydegleq3, kenkudegleq3, kamienny2degleq3, derickxdegleq3, classificationdeg4points}.

Less is known for the modular curve $X_0(n)$, which is an algebraic curve over $\Q$ whose non-cuspidal points parametrize elliptic curves with a distinguished cyclic subgroup of order $n$. The finite set of $n$ for which $X_0(n)$ has a non-cuspidal rational point is completely determined, by results from Mazur \cite{mazurX0n}, Kenku \cite{kenkuX0n}, and others; for more history, see \cite[Section 9]{lozano-robledo}. As for quadratic points of $X_0(n)$, a direct analogue of Merel's result is false, since non-cuspidal points of degree $2$ corresponding to elliptic curves with complex multiplication (CM) occur for infinitely many $n$. It is known for which $n$ there are infinitely many degree 2 points on $X_0(n)$, by work of Harris and Silverman \cite{harrissilverman}, Ogg \cite{oggpaper}, and Bars \cite{bars}. However, the set of $n$ for which there exists a nonempty and finite set of quadratic, non-CM, non-cuspidal points on $X_0(n)$ — which are isolated, degree 2 points — remains unknown. A recent conjecture of Mazur and Balakrishnan \cite[Conjecture 18]{ogg} states that there are finitely many such $n$. These questions motivate our work toward classifying $j$-invariants which arise from non-CM isolated points on $X_0(n)$.

Let $k$ be a number field. In general, a closed point of degree $d$ on an algebraic curve $C/k$ is \textit{isolated} if it does not belong to any infinite family of degree $d$ points that can be parameterized by some geometric object; i.e., parametrization by $\mathbb{P}^1$ or by a positive rank abelian subvariety of $\jac(C)$. See Section \ref{subsec:isolated} for details. If $x \in X_0(n)$ is an isolated point corresponding to the elliptic curve $E$ with a cyclic subgroup of order $n$, then $j(E) = j(x)$ and we call $j(x) \in X_0(1)$ an \textit{isolated $j$-invariant}. We develop an algorithm to test whether a given rational, non-CM $j$-invariant is isolated, and we show in Theorem \ref{CMthm} that every CM $j$-invariant is isolated. We rely on Zywina's \cite{zywina} algorithm for computing the adelic image of $E/\Q$, and a key result of Menendez \cite{Zonia}. The latter result appears in Menendez's PhD thesis \cite{Zonia}, but since the resulting paper is not yet available on arXiv and the conventions of our work are somewhat inconsistent, we include a general statement and proof for ease of reference in Appendix \ref{appendix:Menendez}, written by Abbey Bourdon.

\begin{algorithm}
\caption{NonIsolated}\label{introalgorithm:main}
    \begin{algorithmic}[1]
        \Require{A non-CM $j$-invariant $j \in \Q$.}
        \Ensure{A finite list \verb|{* <a_1, d_1>, ..., <a_k, d_k> *}| of $($level, degree$)$ pairs such that $j$ is isolated if and only if there exists an isolated point $x \in X_0(a_i)$ of degree $d_i$ with $j(x) = j$ for some pair \verb|<a_i, d_i>| in the list.}
        \end{algorithmic}
\end{algorithm}
\setcounter{algorithm}{0} 

If the output of Algorithm \ref{introalgorithm:main} is empty for a given $j \in \Q$, then $j$ is not an isolated $j$-invariant with respect to $X_0(n)$. If the output is nonempty, then other techniques must be used to determine whether $j$ is isolated.

If $E$ is an elliptic curve with a mod $n$ image $G(n)$ of genus 0, then the modular curve $X_{G(n)}$ is isomorphic to $\mathbb{P}^1$. As part of Algorithm \ref{introalgorithm:main}, we show that in this case, we can construct a degree $d$ map from $X_0(n)$ to $\mathbb{P}^1$ which parametrizes any point $x \in X_0(n)$ with the same $j$-invariant as $E$; see Section \ref{section:genus0}.
\begin{theorem}\label{genus0inintro}
    Let $E/\Q$ be an elliptic curve with mod $n$ image $G(n)$ of genus $0$, where $n$ is a positive integer. Then every $x \in X_0(n)$ with $j$-invariant $j(x) = j(E)$ is $\mathbb{P}^1$-parameterized.
\end{theorem}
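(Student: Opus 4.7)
The plan is to construct a non-constant morphism $\phi:\mathbb{P}^1\to\sym^d X_0(n)$ with $\phi(t_0)=[x]$ for some $t_0\in\mathbb{P}^1(\Q)$, where $d$ is the degree of the closed point $x$. The existence of such $\phi$ is precisely the assertion that $[x]$ lies in a positive-dimensional linear system on $X_0(n)$, i.e., that $x$ is $\mathbb{P}^1$-parameterized.

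The source $\mathbb{P}^1$ will be $X_{G(n)}$ itself, equipped with the rational point $t_0$ corresponding to $E$. To build $\phi$, I would begin by identifying $x$ in group-theoretic terms. Letting $B_0(n)\le\GL_2(\Z/n\Z)$ denote the subgroup stabilizing a cyclic subgroup of order $n$, the $\bar\Q$-points of $X_0(n)$ above $j(E)$ are in bijection with the cyclic order-$n$ subgroups of $E[n]$, with $\Gal(\bar\Q/\Q)$ acting through $G(n)$. Thus $x$ corresponds to a $G(n)$-orbit $O_x$ of size $d=[G(n):H_x]$, where $H_x:=G(n)\cap\sigma B_0(n)\sigma^{-1}$ for an appropriate $\sigma\in\GL_2(\Z/n\Z)$. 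I would then form the fiber product $X_{G(n)}\times_{X(1)}X_0(n)$, whose irreducible components are of the form $X_{G(n)\cap\sigma B_0(n)\sigma^{-1}}$ indexed by the double cosets in $G(n)\backslash\GL_2(\Z/n\Z)/B_0(n)$. The component $Y:=X_{H_x}$ comes equipped with projections $\pi_1:Y\to X_{G(n)}\cong\mathbb{P}^1$ of degree $[G(n):H_x]=d$ and $\pi_2:Y\to X_0(n)$.

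The key computation is $(\pi_2)_*\pi_1^*[t_0]=[x]$ as divisors on $X_0(n)$: over $\bar\Q$, the fiber $\pi_1^{-1}(t_0)$ consists of the $d$ pairs $(E,C)$ for $C\in O_x$, and these push forward under $\pi_2$ to the Galois conjugates comprising $[x]$. Because pullback and pushforward preserve linear equivalence and any two $\Q$-points of $\mathbb{P}^1$ are linearly equivalent, the rule $\phi(s):=(\pi_2)_*\pi_1^*[s]$ defines a morphism $\phi:\mathbb{P}^1\to\sym^d X_0(n)$ whose image lies in the linear equivalence class of $[x]$, with $\phi(t_0)=[x]$. Non-constancy of $\phi$ is immediate: for $s\neq s'$ in $\mathbb{P}^1(\Q)$ with distinct images in $X(1)$ (which exist since $X_{G(n)}\to X(1)$ is non-constant), the divisors $\phi(s)$ and $\phi(s')$ are supported over distinct $j$-invariants and hence differ. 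Dually, the resulting pencil determines a non-constant rational map $X_0(n)\to\mathbb{P}^1$ with $x$ in a $\Q$-rational fiber, yielding the explicit degree-$d$ map from $X_0(n)$ to $\mathbb{P}^1$ promised in the paper.

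The main obstacle I anticipate is the rigorous identification of the components of $X_{G(n)}\times_{X(1)}X_0(n)$ with the modular curves $X_{G(n)\cap\sigma B_0(n)\sigma^{-1}}$, together with the moduli-theoretic description of $\pi_1^{-1}(t_0)$ needed to verify the key pushforward identity; this is standard for modular curves but requires careful bookkeeping to get right.
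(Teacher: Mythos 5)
Your proposal is correct in spirit but takes a genuinely different route from the paper's proof, and the comparison is worth spelling out. The paper explicitly avoids working with geometrically disconnected modular curves: it first observes that $B = U(n)\cap G(n)$ need not have surjective determinant, base-changes to the field $K$ fixed by $\det B \leq (\Z/n\Z)^\times$ so that $X_B'$ becomes a geometrically integral curve over $K$, proves that $x_0 \in X_0(n)_K$ is $\mathbb{P}^1_K$-parameterized by combining Lemma \ref{lemma35}, Lemma \ref{lemma36}, and the degree criterion of Lemma \ref{lemma37}, and then descends to $\Q$ via the Derickx descent lemma (Lemma \ref{descentlemma}). Your construction instead stays over $\Q$ throughout: you form the fiber product $X_{G(n)}\times_{X(1)}X_0(n)$, single out the component $Y$ corresponding to the $G(n)$-orbit $O_x$, and realize the parametrization as the pushforward of a pencil on $\mathbb{P}^1 \cong X_{G(n)}$ along the correspondence $(\pi_1,\pi_2)$. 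That is closer in flavor to Terao's treatment \cite{terao}, which the paper explicitly contrasts itself with.

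What your approach buys is conceptual economy (no auxiliary number field, no descent step) and a more transparent reason for the parametrization: the divisor class $\phi(s)=(\pi_2)_*\pi_1^*[s]$ visibly sweeps out a pencil. What it costs is exactly the bookkeeping you flag at the end, and there is a real gap to fill there that the paper's route is engineered to sidestep: the component $Y = X_{H_x}$ with $H_x = G(n)\cap\sigma B_0(n)\sigma^{-1}$ is in general not geometrically connected over $\Q$ (precisely because $\det H_x$ need not be all of $(\Z/n\Z)^\times$). You therefore cannot quote the standard facts about maps of nice curves (Lemmas \ref{lemma35}--\ref{lemma37}) off the shelf and must instead verify directly that $\pi_1,\pi_2$ are finite flat $\Q$-morphisms, that the algebraic degree of $\pi_1$ over $\Q$ is $d=[G(n):H_x]$, and that $\pi_1$ is unramified over $t_0$ so that $(\pi_2)_*\pi_1^*[t_0]$ is a reduced divisor equal to $[x]$ (this last point is where $j(E)\in\{0,1728\}$ would need separate treatment, though those are CM and outside the scope of the application). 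None of these points is false, but they constitute the content that the paper handles by passing to $K$, where $X_B'$ is geometrically integral and the cited lemmas apply verbatim. If you fill in the identification of the $\Q$-components of the fiber product with the curves $X_{H_x}$ and the étaleness of $\pi_1$ over $t_0$, your argument is complete and is an honest alternative to the proof in the paper.
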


Thus by Theorem \ref{genus0inintro}, we can restrict to running the code associated to Algorithm \ref{algorithm:main} on all non-CM $j$-invariants in the LMFDB associated to an adelic image of genus strictly greater than 0. The resulting output shows that the 14 $j$-invariants of this list which correspond to a known isolated non-CM rational point on some $X_0(n)$ are the only isolated $j$-invariants from this list; see Lozano-Robledo \cite[Table 4]{lozano-robledo}. We list these $j$-invariants in Table \ref{table1}.

\begin{center}
\captionof{table}{Known rational non-CM isolated $j$-invariants for $X_0(n)$}\label{table1}
\begin{tabular}{||c c c||} 
 \hline
 $j$-invariant & level & degree of isolated point \\ [0.5ex] 
 \hline
  \hline
 -121 & 11 & 1 \\
 \hline
 -24729001 & 11 & 1\\
 \hline
 46969655/32768 & 15 & 1 \\
 \hline
-121945/32 & 15 & 1 \\
\hline
-349938025/8 & 15 & 1 \\
\hline
-25/2 & 15 & 1\\
\hline
-882216989/131072 & 17 & 1 \\
 \hline
 -297756989/2 & 17 & 1 \\
 \hline
 3375/2 & 21 & 1 \\ 
 \hline
 -189613868625/128 & 21 & 1 \\
 \hline
-1159088625/2097152 & 21 & 1 \\
\hline
 -140625/8 & 21 & 1
 \\
 \hline
 -9317 & 37 & 1
 \\
 \hline -162677523113838677 & 37 & 1\\
 [1ex]
 \hline
\end{tabular}
\end{center}

It is natural to ask: Are the isolated $j$-invariants of $X_0(n)$ correlated to the isolated $j$-invariants of $X_1(n)$? Prior to this work, it was shown by Bourdon, Hashimoto, Keller, Klagsbrun, Lowry-Duda, Morrison, Najman, and Shukla 
\cite[Theorem 2]{bourdon2024classification} that only 3 of the 14 $j$-invariants in Table 1 correspond to isolated points on $X_1(n)$: $-140625/8, -9317,$ and $ -162677523113838677$. This was determined by the authors' algorithm which tests whether a non-CM $j \in \Q$ arises as the $j$-invariant of an isolated point on any modular curve $X_1(n)$. Conversely, in this paper, we show that there is a $j$-invariant corresponding to an isolated point on $X_1(n)$ which does not correspond to an isolated point on $X_0(n)$. The elliptic curve with $j$-invariant $351/4$ gives an isolated point of degree 9 on $X_1(28)$, as shown in \cite[Theorem 2]{odddegree}. We find that $j = 351/4$ does not correspond to an isolated point on $X_0(n)$ for any $n \in \Z^+$, which implies the following result.
\begin{theorem}\label{subsets result in intro}
    The set of rational $j$-invariants corresponding to isolated points on $X_1(n)$ is neither a subset nor a superset of those corresponding to isolated points on $X_0(n)$.
\end{theorem}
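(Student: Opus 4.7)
The plan is to establish the two non-containments separately by exhibiting explicit witnesses, using the algorithms and results surveyed in the introduction.

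For the direction that the set of isolated $j$-invariants on $X_0(n)$ is not contained in the set of isolated $j$-invariants on $X_1(n)$, I would pick any $j$-invariant from Table \ref{table1} that is \emph{not} in the list $\{-140625/8,\,-9317,\,-162677523113838677\}$ identified in \cite[Theorem 2]{bourdon2024classification}. For instance, $j=-121$ gives a rational isolated point on $X_0(11)$ (so $-121$ is an isolated $j$-invariant for $X_0$), while by \cite[Theorem 2]{bourdon2024classification} it does \emph{not} arise as the $j$-invariant of an isolated point on any $X_1(n)$. This settles the non-superset direction immediately by citation.

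For the reverse direction, the witness suggested by the excerpt is $j=351/4$: by \cite[Theorem 2]{odddegree} this $j$-invariant corresponds to an isolated point of degree $9$ on $X_1(28)$. So it suffices to show that $j=351/4$ is \emph{not} an isolated $j$-invariant for any $X_0(n)$. The strategy is to run Algorithm \ref{introalgorithm:main} on input $j=351/4$. If the returned list of (level, degree) pairs is empty, then by the specification of the algorithm $j$ is not isolated with respect to $X_0(n)$ for any $n$, and we are done. If the output is nonempty, then for each pair $\langle a_i,d_i\rangle$ in the list we must separately verify that no $x\in X_0(a_i)$ of degree $d_i$ with $j(x)=351/4$ is isolated; this would be done by exhibiting an explicit infinite family (either a $\mathbb{P}^1$-parametrization, in which case Theorem \ref{genus0inintro} applies when the relevant mod-$n$ image has genus $0$, or an abelian-variety-parametrized family) passing through any such candidate point.

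The main obstacle is precisely the second possibility in the previous paragraph: ensuring that every pair returned by Algorithm \ref{introalgorithm:main} for $j=351/4$ can be ruled out. Since the algorithm only certifies non-isolation when the output is empty, in general I expect to have to inspect each candidate level $a_i$ by computing the mod-$a_i$ image $G(a_i)$ attached to the elliptic curve of $j$-invariant $351/4$, verifying whether it has genus $0$ or otherwise constructing an explicit positive-dimensional family. Once every candidate pair is eliminated, the combination with the $X_1(28)$ isolation result of \cite{odddegree} yields the non-subset direction and completes the proof.
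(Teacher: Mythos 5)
Your proposal matches the paper's argument essentially point for point: the non-superset direction is obtained by citing \cite[Theorem 2]{bourdon2024classification} for a Table \ref{table1} $j$-invariant such as $-121$, and the non-subset direction uses $j=351/4$ together with a run of Algorithm \ref{algorithm:main}. The paper carries out the computation you describe and finds the reduced level is $28$ with primitive points $\langle 28,21\rangle, \langle 4,3\rangle, \langle 7,7\rangle, \langle 1,1\rangle, \langle 28,3\rangle, \langle 7,1\rangle$, all of which are eliminated by the Riemann--Roch filter (Lemma \ref{thm:riemannrochlemma}) since in each case the degree exceeds $\genus(X_0(a_i))$, so the returned list is empty and the fallback analysis you anticipate is not needed.
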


After developing our main algorithm, we learned that our computational results coincide with recent work of Terao \cite[Theorem 1.6]{terao}. Terao showed that, assuming a conjecture of Zywina \cite{zywina} concerning images of Galois representations of elliptic curves over $\Q$, the above $j$-invariants are the complete set of rational isolated $j$-invariants for $X_0(n)$. Our work gives a distinct, algorithmic approach for detecting isolated points in which we primarily work with the $m$-adic Galois representation, where $m$ denotes the product of 2, 3, and all non-surjective primes. By contrast, Terao makes use of Zywina's theory of agreeable closures \cite{zywina}. Additionally, Terao's work handles isolated points on geometrically disconnected curves in the proof of \cite[Theorem 1.3]{terao}, whereas our proof of Theorem \ref{thm38} utilizes a descent lemma of Derickx \cite{bourdon2024classification} to change the field of definition, ensuring that the relevant modular curves remain geometrically connected.

Our approach involves additional results concerning CM $j$-invariants and also relies on developing the theory of ``primitive points," characterized in Theorem \ref{introsprimitivecharacterization}, which was modeled after previous work of Bourdon, Hashimoto, Keller, Klagsbrun, Lowry-Duda, Morrison, Najman, and Shukla \cite{bourdon2024classification}.

\subsection{Key components of main algorithm}
The first step of Algorithm \ref{algorithm:main} applies Zywina's algorithm \cite{zywina} for computing the image of the adelic Galois representation of a non-CM elliptic curve $E/\Q$ as a subgroup of $\GL_2(\widehat{\Z}).$ Using the adelic image, we apply results from Menendez \cite[Theorem 5.1]{Zonia} and Bourdon, Ejder, Liu, Odumodu, and Viray \cite[Theorem 4.3]{belov} to compute a finite set of ``primitive points" associated to a non-CM elliptic curve. The following theorem characterizes primitive points.
\begin{theorem}\label{introsprimitivecharacterization}
    Let $E/\Q$ be a non-CM elliptic curve, and let $\mathcal{P}(E)$ denote the primitive points. Then:
    \begin{itemize}
    \item The set $\mathcal{P}(E)$ is finite.
    \item For every $n \in \Z^+$ and every closed point $x \in X_0(n)$ associated to $E$, there is a unique point in the set $\mathcal{P}(E)$ which corresponds to $x$ via the natural projection map.
    \item The rational number $j(E)$ is isolated if and only if there exists an isolated point in $\mathcal{P}(E)$.
    \end{itemize}
\end{theorem}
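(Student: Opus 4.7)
The plan is to work throughout with the adelic Galois representation $\rho_E \colon G_\Q \to \GL_2(\widehat{\Z})$ and its image $G$, computed via Zywina's algorithm. The standard identification of closed points on $X_0(n)$ lying over $j(E) \in X(1)$ with orbits of $G(n)$ acting on the set of cyclic order-$n$ subgroups of $(\Z/n\Z)^2$ packages all such points coherently across levels. Following the model of \cite{bourdon2024classification}, a \emph{primitive point} should be a closed point $P \in X_0(N)$ associated to $E$ that is maximal with respect to the partial order on same-degree lifts: no closed point $P' \in X_0(M)$ with $N \mid M$, $M > N$, of the same residue-field degree projects to $P$ under the natural map $X_0(M) \to X_0(N)$. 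With this setup, each of the three assertions will be proved in turn.

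\textbf{Finiteness.} The essential input is Menendez's Theorem 5.1 from \cite{Zonia}, which yields an explicit bound $N_E$, depending only on $G$, beyond which no genuinely new same-degree orbit data appears: for $N > N_E$, every closed point $P \in X_0(N)$ above $j(E)$ admits a proper divisor $n \mid N$ together with a same-degree preimage on $X_0(n)$. It follows that primitive points occur only at levels dividing $N_E$. Since each $X_0(n) \to X(1)$ is a finite map, only finitely many closed points lie above $j(E)$ at any such level, and $\mathcal{P}(E)$ is finite.

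\textbf{Unique primitive lift.} Given $x \in X_0(n)$ associated to $E$, I would iteratively lift: while $x$ is not primitive, replace it by a same-degree lift at a strictly larger level. The bound $N_E$ forces termination at some primitive $P$. For uniqueness, the double-coset description shows the collection of same-degree lifts of $x$ is closed under taking $\lcm$ of levels: two lifts at levels $M_1$ and $M_2$ combine to a lift at $\lcm(M_1, M_2)$ because the relevant $G$-orbits are determined by the compatible subgroups they pick out in $\GL_2(\Z/M_i\Z)$. Hence the poset of same-degree lifts has a unique maximum, which must be the primitive lift $P$.

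\textbf{Isolated iff isolated.} For the final equivalence, the key is that the natural projection $\pi \colon X_0(M) \to X_0(n)$ is finite, and when $P \mapsto x$ is a same-degree lift, the induced residue-field extension is trivial. The result \cite[Theorem 4.3]{belov} then controls how $\mathbb{P}^1$-parameterization and positive-rank abelian-variety parameterization behave under such projections, yielding that $P$ is isolated if and only if $x$ is. Iterating along the finite tower of same-degree lifts guaranteed by the previous two steps gives the claim: any isolated witness for $j(E)$ travels upward to an isolated primitive point, and conversely any isolated primitive point pushes down to an isolated closed point with $j$-invariant $j(E)$.

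The main obstacle I expect is the last step: making precise that the isolated property transfers bidirectionally through the natural projection between $X_0$ modular curves, under the hypothesis of a trivial residue-field extension. This requires a careful invocation of \cite[Theorem 4.3]{belov} coupled with the double-coset description of same-degree lifts coming from Menendez's framework. The finiteness and uniqueness claims, by contrast, should reduce to bookkeeping once Menendez's bound $N_E$ and the $\lcm$-closure of same-degree lifts are in hand.
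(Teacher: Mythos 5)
Your proposal rests on a definition of primitive point that does not match the one the paper uses, and the discrepancy propagates through all three bullets. In the paper (Definition \ref{defn:myprimitivepoints}), the primitive point of a closed point $x \in X_0(a)$ with $a \mid m_0$ is obtained by projecting \emph{downward}: it is the image $f(x) \in X_0(d_a)$, where $d_a$ is the \emph{smallest} divisor of $a$ for which the degree relation $\deg(x) = \deg(f)\cdot\deg(f(x))$ holds. Equivalently (Theorem \ref{theorem:defnequivalence}), primitive points are the sinks of a directed graph whose edges point from $(x,n,d)$ to $(x',n',d')$ precisely when $n' \mid n$, $x'=f(x)$, and $d = d'\deg(f)$. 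You instead define a primitive point as one that is \emph{maximal with respect to same-degree lifts}: no $P' \in X_0(M)$ with $N \mid M$, $M > N$, and $\deg(P') = \deg(P)$ projecting to $P$. This is not equivalent. For example, the trivial point $j(E) \in X_0(1)$ is always its own primitive point in the paper's sense (it is a sink), yet it has a same-degree lift whenever $E$ admits any rational cyclic isogeny, so it would be excluded by your criterion. The ``same-degree lift'' relation ($\deg(P') = \deg(P)$) and the graph's edge condition ($\deg(P') = \deg(f)\cdot\deg(P)$) are distinct conditions: for $\deg(f) > 1$ a same-degree lift is precisely a lift that does \emph{not} produce an edge.

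This mismatch makes the supporting steps fail as stated. Your reading of Menendez's theorem — as bounding the levels beyond which ``no genuinely new same-degree orbit data appears'' — is not what \cite[Theorem 5.1]{Zonia} says; Theorem \ref{Zonia} asserts the full-degree relation $\deg(x) = \deg(f)\cdot\deg(f(x))$ for the natural map $X_0(n) \to X_0(\gcd(n,m_0))$, which is what lets the paper restrict attention to divisors of $m_0$ (finiteness is then immediate: finitely many $a \mid m_0$, finitely many closed points above $j(E)$ on each $X_0(a)$). Your uniqueness argument via $\lcm$-closure of same-degree lifts is the dual of what the paper actually proves; the paper establishes uniqueness of the sink via a $\gcd$-compatibility result (Theorem \ref{thm:degreegcd}). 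Most seriously, the third bullet cannot be settled by invoking \cite[Theorem 4.3]{belov} for same-degree lifts: that theorem requires the hypothesis $\deg(x) = \deg(f)\cdot\deg(f(x))$, which for a same-degree lift forces $\deg(f)=1$, and it gives only one implication (isolated upstairs $\Rightarrow$ isolated downstairs). The paper's proof of Theorem \ref{primitive iso iff} goes in the opposite direction from yours: starting from any isolated $x\in X_0(n)$, it projects down via Theorem \ref{Zonia} to $X_0(\gcd(n,m_0))$ and then to the minimal level $d_a$, using Theorem \ref{belovdeg} at each stage to carry isolation downstairs to the primitive point. Your proposal would instead need a transfer of isolation \emph{upward} along a same-degree lift, including the AV-isolated part, for which neither \cite[Theorem 4.3]{belov} nor Lemma \ref{lemma37} provides a complete argument.
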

Thus, we can reduce the problem of searching for isolated points in the infinite set of curves, $\cup_{n \in \Z^+} X_0(n)$. In particular, since $j(E)$ is isolated if and only if there exists an isolated point in $\mathcal{P}(E)$, it is sufficient to test whether any points in $\mathcal{P}(E)$ are isolated. Primitive points on $X_1(n)$ were defined in \cite[Section 5]{bourdon2024classification}, but in this work we provide a simpler definition for primitive points on $X_0(n)$ and in Section \ref{subsec:primitiveptsconnections} we prove its equivalence to the definition given in \cite{bourdon2024classification}.

We next use the level and degree information of the primitive points to filter the set $\mathcal{P}(E)$. In doing so, we can eliminate points that we know to be non-isolated from the set of primitive points of $E$. First, if the degree of some closed point $x \in X_0(n)$ is strictly greater than the genus of the modular curve $X_0(n)$, then the Riemann-Roch space associated to $x$ is non-trivial, implying that $x$ is $\mathbb{P}^1$-parameterized; see Lemma \ref{thm:riemannrochlemma}. Second, by Theorem \ref{genus0inintro}, any point $x \in X_0(n)$ with the same $j$-invariant as an elliptic curve with a genus 0 mod $n$ image is $\mathbb{P}^1$-parametrized; see Theorem \ref{thm38}.

A closed point $x \in C$ of degree $d$ is \textit{sporadic} if there are finitely many closed points of $C$ with degree at most $d$. In the case of CM $j$-invariants, we prove in Section \ref{section:CM} that all CM $j$-invariants are sporadic, and thus are isolated, in the following theorem, which is a quick consequence of Theorem 7.3 in \cite{lemmalowdegreeissporadic}. Thus there is no need for an algorithm to test CM $j$-invariants.

\begin{theorem}
    Let $E$ be an elliptic curve with CM by an order in an imaginary quadratic field $K$. Then for all sufficiently large primes $\ell$ which split in $K$, there exists a sporadic point $x = (E, \langle P \rangle) \in X_0(\ell)$ with only sporadic lifts. Specifically, for any positive integer $m\in \Z^+$ and any point $y \in X_0(m\ell)$ with $f(y) = x$, the point $y$ is sporadic, where $f$ denotes the natural map $X_0(m \ell) \to X_0(\ell)$.
\end{theorem}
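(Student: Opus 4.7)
The plan is to exhibit, for each sufficiently large split prime $\ell$, an explicit cyclic subgroup of $E[\ell]$ coming from the CM structure, show that the associated point on $X_0(\ell)$ has degree bounded independently of $\ell$, and then apply Theorem 7.3 of \cite{lemmalowdegreeissporadic} (which says that a closed point whose degree is small relative to the gonality is sporadic) to obtain both sporadicity of $x$ and sporadicity of every lift $y$.

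First, I would use the CM structure to build $x$. Let $\mathcal{O}$ be the order by which $E$ has CM. For $\ell$ unramified in $\mathcal{O}$ and split in $K$, write $(\ell) = \mathfrak{l}\overline{\mathfrak{l}}$ in $\mathcal{O}$. Then $E[\mathfrak{l}]$ is a cyclic subgroup of order $\ell$ that is stable under the action of $\operatorname{Gal}(\overline{\Q}/L)$, where $L$ is the compositum of $K$ and a fixed field of definition of $E$ containing $j(E)$; the two primes above $\ell$ are swapped by the nontrivial element of $\operatorname{Gal}(K/\Q)$. This produces a closed point $x = (E,E[\mathfrak{l}]) \in X_0(\ell)$ whose degree over $\Q$ is bounded by $2[L:\Q]$, a constant $C$ depending only on $E$.

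Next I would invoke known lower bounds on the $\Q$-gonality of $X_0(N)$, which grow linearly in $N$ (Abramovich-type bounds). Since $C$ is fixed and $\gamma_\Q(X_0(\ell)) \to \infty$ with $\ell$, for all $\ell \gg 0$ the degree of $x$ falls well below the threshold in Theorem 7.3 of \cite{lemmalowdegreeissporadic}, so $x$ is sporadic. For the statement about lifts, let $y \in X_0(m\ell)$ satisfy $f(y) = x$. Then $[\Q(y):\Q(x)] \leq \deg(f) = [\Gamma_0(\ell):\Gamma_0(m\ell)]$, which equals $m\prod_{p\mid m,\, p\nmid \ell}(1+1/p)$ and hence satisfies $\deg(f) \leq 2m$ uniformly. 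Consequently $\deg_\Q(y) \leq 2Cm$, while $\gamma_\Q(X_0(m\ell))$ is bounded below by a positive constant times $m\ell$. Choosing $\ell$ large enough (independent of $m$) forces $\deg_\Q(y)$ below the gonality threshold, and Theorem 7.3 applies again to yield sporadicity of $y$.

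The main obstacle, in my view, is ensuring that the sporadicity conclusion for lifts holds uniformly in $m$: one needs the degree bound $2Cm$ to beat the gonality bound $\gtrsim m\ell/24$ for every $m \in \Z^+$ simultaneously, not just for one fixed level. Fortunately, the linear growth of $[\Gamma_0(\ell):\Gamma_0(m\ell)]$ in $m$ cancels against the linear growth of the gonality in $m$, leaving a genuine $\ell$-dependence that can be absorbed into the phrase ``for all sufficiently large primes $\ell$ split in $K$.'' With this uniform comparison in hand, the theorem follows as a direct consequence of Theorem 7.3 of \cite{lemmalowdegreeissporadic}.
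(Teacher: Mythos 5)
Your overall strategy matches the paper's: use the CM structure to produce a point $x \in X_0(\ell)$ of bounded degree (the paper bounds it by $2h(\mathcal{O})$ via a cyclic $\ell^k$-isogeny over $K(j(E))$ from \cite{CMbourdonclark}, you bound it by a constant $C$ via $E[\mathfrak{l}]$), then invoke Theorem~7.3 of \cite{lemmalowdegreeissporadic} after making the degree small relative to the level. Where you diverge is in handling the lifts. The paper reads Theorem~7.3 (quoted as Lemma~\ref{lemma:lowdegree}) as already containing the lift statement: once $\deg(x) \leq \frac{119}{24000}\deg(X_0(\ell)\to X_0(1))$, every preimage $y \in X_0(m\ell)$ of $x$ is automatically sporadic, with no further estimate needed. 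You instead re-derive the lift case by bounding $\deg(y)$ and comparing with the gonality of $X_0(m\ell)$.

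That re-derivation contains a genuine error. You assert $\deg(f) = m\prod_{p\mid m,\, p\nmid \ell}(1+1/p) \leq 2m$ uniformly in $m$. This is false: the product $\prod_{p \mid m}(1+1/p)$ is unbounded as $m$ ranges over $\mathbb{Z}^+$ (it grows like a constant times $\log\log m$ along highly composite $m$; already $m = 2\cdot 3\cdot 5\cdot 7\cdot 11$ gives a product $\approx 2.99$, breaking the bound). Because you then lower-bound the gonality of $X_0(m\ell)$ by merely ``a constant times $m\ell$,'' rather than by a constant times $\deg(X_0(m\ell) \to X_0(1)) = m\ell\prod_{p \mid m\ell}(1+1/p)$, the extra $\prod(1+1/p)$ factor does not cancel in your written argument, and for fixed $\ell$ the ratio $\deg(y)/(cm\ell)$ is not bounded uniformly in $m$. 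Your heuristic sentence about the two linear growths ``canceling'' is pointing at the right phenomenon, but what actually cancels is the $\prod_{p\mid m}(1+1/p)$ appearing in both $\deg(f)$ and $\deg(X_0(m\ell)\to X_0(1))$, leaving exactly a factor of $\ell+1$. To make your version rigorous you must keep that product on both sides. Of course, the cleanest fix is the paper's: Theorem~7.3 is stated so that a single inequality at level $\ell$ already yields sporadicity of all lifts, and no separate gonality comparison at level $m\ell$ is necessary.
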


\subsection{Outline}
We first give relevant background in Section \ref{section:background}. Next, in Section \ref{section:CM}, we prove that all CM $j$-invariants are isolated. In Section \ref{section:genus0}, we prove Theorem \ref{genus0inintro}, which allows us to conclude that if there is an elliptic curve $E/\Q$ with a mod $n$ image of genus 0, then every $x \in X_0(n)$ with $j(x) = j(E)$ is $\mathbb{P}^1$-parameterized. In Section \ref{section:overview}, we give an overview of our main algorithm for testing whether a $j$-invariant is isolated. In Sections \ref{section:primitive} and \ref{section:primitivealgorithm}, we characterize the set of primitive points, connect our definition to the definition given in \cite{bourdon2024classification}, and describe a sub-algorithm for computing the set. We give a proof showing the validity of the main algorithm in Section \ref{section:validity}, and in Section \ref{section:351/4} we describe an example which proves Theorem \ref{subsets result in intro}.

\subsection{Code}
We have implemented our algorithm in Magma. Our code, and output from running our algorithm on all 30,141 non-CM $j$-invariants associated to an adelic image with genus 0, is available in the Github repository at \href{https://github.com/meghanhlee/NonIsolated}{https://github.com/meghanhlee/NonIsolated}.

\section*{Acknowledgments}
The author was partially supported by NSF grant DMS-2137659. We thank Abbey Bourdon, Jeremy Rouse, and Leandro Lichtenfelz for helpful conversations and comments on earlier versions of this paper. We also thank Jacob Mayle for helpful conversations.

\section{Background}\label{section:background}
\subsection{Conventions}
Let $k$ denote a number field, and $\overline{k}$ denote its algebraic closure. Throughout, we take $C$ to be a nice curve defined over $k$; i.e., $C$ is a smooth, projective, geometrically integral, 1-dimensional scheme defined over $k$.

We write the absolute Galois group of $k$ as $\Gal(\overline{k}/k)$. For a prime number $p$, we denote the ring of $p$-adic integers as $\Z_p$. If $m\in\Z^+$ is not prime, then $\Z_m \coloneqq \prod_{p \mid m} \Z_p$. For a positive integer $a \in \Z^+$, the set $\Supp(a)$ denotes the set of prime divisors of $a$. 

Let $\widehat{\Z}$ denote the ring of profinite integers. We often write $G(n)$ to be the image of a subgroup $G \leq \GL_2(\widehat{\Z})$ via the projection map $\GL_2(\widehat{\Z}) \to \GL_2(\Z/n\Z).$

We refer to the $L$-functions and Modular Forms Database as the LMFDB \cite{lmfdb}.

\subsection{Elliptic curves}
Let $k$ be a number field. Recall that an elliptic curve $E$ over $k$ is a nonsingular, projective, cubic curve with a distinguished point $\mathcal{O}$ given by the short Weierstrass equation
\begin{align*}
    E : Y^2 Z &= X^3 + aXZ^2 + bZ^3.
\end{align*}

We write $E[n]$ to denote the set of all torsion points with order dividing $n$. Throughout, we identify elements of $E[n]$ with column vectors.

Recall for any fixed number field $k$, there are finitely many isomorphism classes of elliptic curves with an endomorphism ring larger than $\Z$, and we say such elliptic curves have complex multiplication.
\begin{definition}
    An elliptic curve $E$ defined over a number field $k$ has \textbf{complex multiplication} (CM) if its endomorphism ring is larger than $\Z.$ In particular, there is an imaginary quadratic field $K$ such that
    \begin{align*}
        \End_{\overline{k}}(E) \cong \mathcal{O},
    \end{align*}
    where $\mathcal{O}$ is the order in $K$ of conductor $f.$
\end{definition}

\begin{definition}
    Let $\mathcal{O}$ denote the ring of integers in the field $K$. The \textbf{discriminant of an order} in an imaginary quadratic field is defined as $\Delta = f^2 \Delta_K$, where the conductor $f = [\mathcal{O}_K : \mathcal{O}]$ and $\Delta_K$ denotes the discriminant of the field $K.$
\end{definition}

\subsection{The modular curve $X_0(n)$}
Here we recall the construction of the modular curve $X_0(n).$ For any $n \in \Z^+,$ we define the congruence subgroup of $\SL_2(\Z)$,
\begin{align*}
    \Gamma_0(n) \coloneqq \left\{ \begin{pmatrix}
        a & b \\ c & d
    \end{pmatrix} \in \SL_2(\Z) : c \equiv 0 \Mod{n}\right\}.
\end{align*}
This congruence subgroup acts on $\mathbb{H}$, the upper half-plane of $\C$, via \textbf{linear fractional transformations}; i.e., if $\gamma \in \Gamma_0(n)$, and $\tau \in \mathbb{H}$, then the action $\gamma : \mathbb{H} \to \mathbb{H}$ is given by
    \begin{align*}
        \gamma(\tau) = \frac{a\tau + b}{c\tau + d}.
    \end{align*}
The quotient $\mathbb{H}/\Gamma_0(n)$ is a Riemann surface, whose points correspond to isomorphism classes of elliptic curves over $\C$ with a distinguished point of order $n$, but it is not compact. So we then take its compactification, $\mathbb{H}^*/\Gamma_0(n)$, where $\mathbb{H}^*$ denotes the extended upper half-plane, which is given by
\begin{align*}
\mathbb{H}^* = \mathbb{H} \cup \mathbb{P}^1(\Q).
\end{align*}
We can then identify this Riemann surface with a smooth, projective curve defined over $\Q$, which is the modular curve $X_0(n)$. For details, see \cite[Theorem 13.1]{silverman}.

A point $x \in X_0(n)(k)$ is \textbf{noncuspidal} if it corresponds to a pair $(E, \langle P \rangle)$, where $E$ is an elliptic curve over $k$ and $\langle P \rangle$ is a distinguished $k$-rational cyclic subgroup of order $n$. That is, for every $\sigma \in \Gal(\overline{k}/k)$ and every $a \in \langle P \rangle$, we have $\sigma(a) \in \langle P \rangle$.

\subsection{Degree of projection map between modular curves}\label{section:degreeofmap}
We provide formulas for the degree of the natural projection map $f:X_0(n) \to X_0(m)$ for $n, m \in \Z^+$, such that $m$ is a proper divisor of $n$, since $f: X_0(n) \to X_0(n)$ is a map of degree 1. We use this degree computation in Algorithm \ref{algorithm:primitive} to compute the list of primitive points of $E$ characterized in Definition \ref{defn:myprimitivepoints}.

\begin{proposition}
    Let $a, b \in \Z^+$. There is a natural $\Q$-rational map $f: X_0(ab) \to X_0(a)$ sending $[E, \langle P \rangle]$ to $[E, \langle bP \rangle]$ with
    \begin{align*}
        \deg(f) = b \prod_{p \mid b, p \nmid a} \left( 1 + \frac{1}{p} \right).
    \end{align*}
    \begin{proof}
        This follows from the formulas of \cite[p. 66]{gamma0degrees}.
    \end{proof}
\end{proposition}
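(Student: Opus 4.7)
The plan is to identify the natural projection map with the one induced by inclusion of congruence subgroups, and then compute its degree via a standard index calculation. First, I would recall the complex-analytic uniformization $X_0(N)(\C) \cong \Gamma_0(N) \backslash \mathbb{H}^*$ from the background section. Since $a \mid ab$, there is a natural inclusion $\Gamma_0(ab) \hookrightarrow \Gamma_0(a)$, which induces a surjective holomorphic map $X_0(ab) \to X_0(a)$ sending $[\tau]_{\Gamma_0(ab)}$ to $[\tau]_{\Gamma_0(a)}$. Under the moduli interpretation, $[\tau]_{\Gamma_0(N)}$ corresponds to the pair $(\C/\langle 1,\tau\rangle, \langle 1/N\rangle)$, so this map sends $[E, \langle P\rangle] \mapsto [E, \langle bP\rangle]$. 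The map descends to a $\Q$-rational morphism because it is described purely in moduli-theoretic terms compatible with the $\Q$-structure on $X_0(N)$.

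Next, I would invoke the standard fact that for a finite-index inclusion $\Gamma' \subseteq \Gamma$ of congruence subgroups (both containing $-I$), the induced map of modular curves has degree equal to $[\Gamma : \Gamma']$; equivalently, to the index of the images in $\mathrm{PSL}_2(\Z)$. Since $-I \in \Gamma_0(N)$ for every $N$, we have
\[
\deg(f) = [\Gamma_0(a) : \Gamma_0(ab)] = \frac{[\SL_2(\Z):\Gamma_0(ab)]}{[\SL_2(\Z):\Gamma_0(a)]}.
\]

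Then I would apply the well-known index formula
\[
[\SL_2(\Z):\Gamma_0(N)] = N\prod_{p \mid N}\left(1+\tfrac{1}{p}\right),
\]
which follows from realizing $\Gamma_0(N)/\{\pm I\}$ as the stabilizer of the standard order-$N$ cyclic subgroup of $(\Z/N\Z)^2$ under the action of $\SL_2(\Z/N\Z)$, combined with a count of cyclic subgroups of order $N$. Substituting and using that the prime divisors of $ab$ are precisely those of $a$ together with those of $b$, the factors indexed by primes $p \mid a$ cancel, yielding
\[
\deg(f) = \frac{ab\prod_{p \mid ab}(1+1/p)}{a\prod_{p \mid a}(1+1/p)} = b\prod_{\substack{p \mid b \\ p \nmid a}}\left(1+\tfrac{1}{p}\right),
\]
as claimed.

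The argument is largely bookkeeping once the correct framework is in place; there is no serious obstacle. The only subtlety worth flagging is the $\Q$-rationality of $f$, which is most cleanly justified by citing the moduli interpretation of $X_0(N)$ as a coarse moduli space over $\Q$ for pairs $(E, C)$ with $C$ cyclic of order $N$, so that the morphism induced by $(E, C) \mapsto (E, bC)$ is automatically defined over $\Q$.
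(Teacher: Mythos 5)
Your argument is correct and is essentially the standard derivation that underlies the reference the paper cites (the index formula $[\SL_2(\Z):\Gamma_0(N)]=N\prod_{p\mid N}(1+1/p)$ and the identification $\deg f=[\Gamma_0(a):\Gamma_0(ab)]$); the paper simply points to \cite[p.~66]{gamma0degrees} rather than spelling out the computation. You have unpacked the citation correctly, including the cancellation of the $p\mid a$ factors, so this is the same approach with the details written out.
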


\subsection{Isolated points}\label{subsec:isolated}

\subsubsection{Preliminary definitions}

Let $C$ be a curve defined over a number field $k$. Throughout this section, we denote the Picard group by $\pic(C)$, the $dth$ symmetric power of a curve by $C^{(d)}$, and the Jacobian of a curve by $\jac(C)$.

The \textbf{closed points} of $C$ can be identified with the $\Gal(\overline{k}/k)$-orbits in $C(\overline{k})$. From this perspective, if $x \in C$ is a closed point, then the \textbf{degree} of $x$ is the size of the Galois orbit in $C(\overline{k}).$ Alternatively, the degree of $x$ is also defined as the degree of the field extension $[k(x) : k]$, where $k(x)$ is the residue field. More precisely, a closed point is a scheme-theoretic point whose Zariski closure is equal to itself.

Let $x \in C$ be a closed point of degree $d$. We have that each degree $d$ closed point $x \in C$ corresponds to a $k$-rational effective divisor
\begin{align*}
    P_1 + \dots + P_d,
\end{align*}
such that $P_1, \dots, P_d$ are the points in the $\Gal(\overline{k}/k)$-orbit of $x$. In the definitions that follow, we consider the image of a closed point $x$ under the natural map from $C^{(d)}$ to $\jac(C)$. Denote this map
\begin{align*}
    \Phi_d : C^{(d)} \to \jac(C).
\end{align*}
We assume there exists $P_0 \in C(k)$, though this is not required; see \cite{belov}. The map $\Phi_d$ sends the effective divisor $D$ of degree $d$ to the equivalence class $[D - dP_0] \in \jac(C)$. The cases in which the map $\Phi_d$ is injective or non-injective on all closed points with degree $d$ each give rise to a notion of geometric parametrization. First, if $x \in C$ is of degree $d$ and $\Phi_d$ is non-injective on $C^{(d)}(k)$, then there exists a point $y \neq x \in C^{(d)}(k)$ such that $\Phi_d(x) = \Phi_d(y)$. This gives a non-constant function $f$ of degree $d$ in the function field of $C$ such that div$(f) = x- y$. For such a function, we know by Hilbert's Irreducibility Theorem \cite[Chapter 9]{serrelectures} that there are infinitely many closed points of degree $d$ in the pullback $f^{-1}(\mathbb{P}^1(k))$, thus inspiring the definition of $\mathbb{P}^1$-parametrization.
 
\begin{definition}
The degree $d$ closed point $x \in C$ is \textbf{$\mathbb{P}^1$-parameterized} if there exists a point $y \in C^{(d)}(k)$, with $y \neq x$, such that $\Phi_d(x) = \Phi_d(y)$. Otherwise, if such a point $y$ does not exist, then the point $x$ is called \textbf{$\mathbb{P}^1$-isolated}.
\end{definition}

If $\Phi_d$ is injective on all degree $d$ points, then by Faltings's Theorem \cite{faltings}, we know that all but finitely many degree $d$ points are associated to a positive rank abelian subvariety of $\jac(C)$, thus motivating the definition of AV-parametrization.

\begin{definition}
    The degree $d$ closed point $x \in C$ is \textbf{AV-parameterized} if there exists a positive rank abelian subvariety $A$ over $k$ with $A \subset \jac(C)$, such that $\Phi_d(x) + A\subset \im(\Phi_d).$ Otherwise, if such a subvariety does not exist, then the point $x$ is \textbf{AV-isolated}.
\end{definition}

If $\Phi_d$ does not admit a parametrization of $x \in C$ in either case, then we call the point $x$ isolated. 
\begin{definition}
    The degree $d$ closed point $x \in C$ is \textbf{isolated} if it is both $\mathbb{P}^1$-isolated and AV-isolated.
\end{definition}

This definition was motivated by an observation of Frey \cite{frey}, which noted that by Faltings's theorem, a curve over a number field may have infinitely many points of degree at most $d$ if and only if these points can be parameterized by a geometric object. In \cite{belov}, Bourdon, Ejder, Liu, Odumodu, and Viray showed that any such geometric parametrization will give rise to an infinite family of points of degree exactly $d$.

\begin{example}
    The affine equation for the modular curve $X_0(48)$ is given by
\begin{align*}
    X_0(48) : y^2 = x^8 + 14x^4 + 1.
\end{align*}
Consider the projection map $f: X_0(48) \to \mathbb{P}^1,$ which maps $(x, y) \mapsto x.$
This degree 2 map gives an infinite family of quadratic points which are $\mathbb{P}^1$-parameterized since, infinitely often, the pullback of a rational $x \in \mathbb{P}^1(\Q)$ will be a quadratic point on $X_0(48)$, by Hilbert's Irreducibility Theorem. An example of a quadratic point which is not parametrized by the map $f$ is $(\sqrt{-1}, 4)$; since $\sqrt{-1}\not\in \Q$, the point $(\sqrt{-1}, 4)$ is not in the set $f^{-1}(\mathbb{P}^1(\Q)).$ Bruin and Najman \cite{bruinnajman} proved that the complete set of isolated quadratic points is given by the set $\{(\pm \sqrt{-1}, 4), (\pm \sqrt{-1}, -4)\}$.
\end{example}

    \begin{definition}
    If $x = (E, \langle P \rangle) \in X_0(n)$ is isolated, then we call the $j$-invariant $j(E) \in X_0(1) \cong \mathbb{P}^1$ an \textbf{isolated $j$-invariant}.
\end{definition}

\subsubsection{Results characterizing isolated points}

Bourdon, Ejder, Liu, Odumodu, and Viray give the following characterization of isolated points.

\begin{theorem}\label{theorem 4.2 belov}\cite[Theorem 4.2]{belov}
    Let $C$ be an algebraic curve over a number field.
    \begin{itemize}
        \item[(1)] There are infinitely many degree $d$ points on $C$ if and only if there is a degree $d$ point on $C$ that is not isolated.
        \item[(2)] There are only finitely many isolated points on $C$.
    \end{itemize}
\end{theorem}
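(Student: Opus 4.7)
The plan is to prove (1) and (2) in sequence. For the reverse direction of (1), the idea is direct from the definitions. If a degree $d$ point $x$ is $\mathbb{P}^1$-parametrized, then there is $y \in C^{(d)}(k)$ with $y \neq x$ and $\Phi_d(x) = \Phi_d(y)$, so $x - y$ is principal, yielding a non-constant function of degree $d$ and hence a degree $d$ morphism $C \to \mathbb{P}^1$; Hilbert's Irreducibility Theorem then produces infinitely many $t \in \mathbb{P}^1(k)$ whose fiber is a single degree $d$ closed point. If instead $x$ is AV-parametrized by a positive-rank abelian subvariety $A \subset \jac(C)$ with $\Phi_d(x) + A \subset \im(\Phi_d)$, then $A(k)$ is infinite, giving infinitely many divisor classes in $\im(\Phi_d)(k)$, each represented by an effective degree $d$ divisor. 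One must verify that infinitely many of these classes are represented by a single degree $d$ closed point rather than by a reducible effective divisor whose support is not a single Galois orbit; this is where the careful geometric arguments of \cite{belov} enter, and is the main obstacle of the proof.

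For the forward direction of (1), suppose $C$ has infinitely many degree $d$ closed points. If two distinct such points $x \neq y$ satisfy $\Phi_d(x) = \Phi_d(y)$, then $x$ is $\mathbb{P}^1$-parametrized and we are done. Otherwise $\Phi_d$ is injective on this set, so its image $S \subset \im(\Phi_d)(k)$ is infinite. I would then apply Faltings's theorem on subvarieties of abelian varieties to conclude that the Zariski closure of $S$ in $\jac(C)$ is a finite union of translates of abelian subvarieties, each contained in the closed subvariety $\im(\Phi_d)$. Since $S$ is infinite, one such translate $b + A$ must contain infinitely many $k$-rational points, forcing $A$ to have positive rank; and $b + A \subset \im(\Phi_d)$. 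For any degree $d$ point $x$ with $\Phi_d(x) \in b + A$, translation yields $\Phi_d(x) + A = b + A \subset \im(\Phi_d)$, so $x$ is AV-parametrized and hence not isolated.

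For (2), a Riemann-Roch argument first bounds the degree of any isolated point by the genus $g$ of $C$: if an effective divisor $D$ has degree $d \geq g + 1$, then $h^0(D) \geq 2$, so $|D| \cong \mathbb{P}^{h^0(D) - 1}$ contains infinitely many $k$-rational effective divisors of degree $d$, all mapping to $\Phi_d(D)$ and demonstrating $\mathbb{P}^1$-parametrization. For each $d \leq g$, the Faltings decomposition covers $\im(\Phi_d)(k)$ by finitely many translates $b_i + A_i$ with each $b_i + A_i \subset \im(\Phi_d)$; the translates for which $A_i$ has positive rank consist entirely of AV-parametrized points, while the remaining translates (with $A_i$ of rank $0$) contribute only finitely many $k$-rational classes. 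At each such remaining class, the fiber in $C^{(d)}(k)$ is a complete linear system, and $\mathbb{P}^1$-isolation forces the fiber to be a single divisor. Hence only finitely many isolated degree $d$ points exist, and summing over $d \leq g$ yields (2).
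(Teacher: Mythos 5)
The paper does not prove this statement; it cites it verbatim from Bourdon--Ejder--Liu--Odumodu--Viray (\cite{belov}), so there is no in-paper proof to compare against. Assessed on its own merits, your sketch is structurally sound in three of its four limbs (the Hilbert-irreducibility argument for the $\mathbb{P}^1$-parameterized case of the reverse direction of (1), the Mordell--Lang argument for the forward direction of (1), and the Riemann--Roch plus Mordell--Lang argument for (2)), but the acknowledged gap in the AV-parameterized case of (1) is genuine and is not a detail one can wave past.

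Concretely: from $\Phi_d(x)+A\subset\im(\Phi_d)$ with $\operatorname{rk}A(k)>0$ you obtain infinitely many $k$-rational \emph{effective divisor classes}, and because $\Phi_d$ is proper, each class is represented by some effective degree-$d$ divisor over $k$. But the theorem asserts infinitely many degree-$d$ \emph{closed points}, i.e.\ irreducible such divisors, and it is entirely possible a priori that all but finitely many of the representatives are reducible (sums of closed points of strictly smaller degree). Ruling this out is not a routine computation: one must analyze the ``reducible locus'' $\bigcup_{d_1+d_2=d}\im\bigl(C^{(d_1)}\times C^{(d_2)}\to C^{(d)}\bigr)$ restricted over $\Phi_d(x)+A$ and show its $k$-points cannot exhaust $\Phi_d^{-1}(\Phi_d(x)+A)(k)$; applying Faltings directly is not available since this locus is not a subvariety of an abelian variety. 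This is precisely the technical core that \cite{belov} supplies (and that the present paper alludes to when it remarks that \cite{belov} show geometric parameterization gives points of degree \emph{exactly} $d$). As written, your proof of (1)$\Leftarrow$ is therefore incomplete.

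Two smaller points worth tightening. First, in both uses of Faltings/Mordell--Lang you should be explicit that you are intersecting $\im(\Phi_d)$ with the finitely generated group $\jac(C)(k)$ (Mordell--Weil), and that the resulting translates $b_i+A_i$ can be taken with $A_i$ defined over $k$ (needed so that the definition of AV-parameterization applies); this requires a short Galois-descent remark. Second, in (2), the phrase ``the translates for which $A_i$ has positive rank consist entirely of AV-parameterized points'' should be rephrased as a statement about the degree-$d$ points $x$ with $\Phi_d(x)\in b_i+A_i$: for such $x$ one has $\Phi_d(x)+A_i=b_i+A_i\subset\im(\Phi_d)$, whence $x$ is AV-parameterized. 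With those clarifications, your arguments for (1)$\Rightarrow$ and (2) are correct.
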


\begin{definition}
    A degree $d$ closed point $x \in $C is \textbf{sporadic} if there are only finitely many closed points of $C$ with degree at most $d.$ 
\end{definition}
We note that Theorem \ref{theorem 4.2 belov} implies that if there are finitely many points of degree $d$ on $C$, then each of the degree $d$ points is isolated. Thus it follows from Theorem $\ref{theorem 4.2 belov}$ that every sporadic point is isolated.

The following degree condition is a key result we use in Section \ref{section:primitive} to construct the set of (possibly isolated) primitive points associated to an elliptic curve $E$.
\begin{theorem}\cite[Theorem 4.3]{belov}\label{belovdeg}
    Let $C, D$ be algebraic curves, and let $f: C \to D$ be a finite map. Let $x \in C$ be a closed point. Assume the degree condition \begin{align*}
        \deg(x) = \deg(f) \cdot \deg(f(x))
    \end{align*}
    holds. If $x \in C$ is isolated, then $f(x) \in D$ is isolated. 
\end{theorem}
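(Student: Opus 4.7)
The plan is to prove the contrapositive: assume $f(x)$ is not isolated in $D$, and produce a parametrization of $x$ in $C$ of the corresponding type. The key preliminary observation is that the degree hypothesis forces $f^*(f(x)) = x$ as a divisor on $C$. Indeed, the scheme-theoretic fiber satisfies $f^*(f(x)) = \sum_{x' \in f^{-1}(f(x))} e_{x'} \cdot x'$ as a divisor on $C$, whose total degree equals $\deg(f)\cdot\deg(f(x))$. Since $x$ lies in the fiber and $\deg(x) = \deg(f)\cdot\deg(f(x))$ by assumption, the remaining contributions must vanish: $x$ is the unique closed point in the fiber and the ramification index at $x$ is $1$. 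I will lean on this identity repeatedly below.

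Suppose first that $f(x)$ is $\mathbb{P}^1$-parameterized on $D$. Then there exists a nonconstant $g \in k(D)^\times$ with $\operatorname{div}(g) = f(x) - y$ for some effective divisor $y \in D^{(\deg f(x))}(k)$ distinct from $f(x)$. Pulling back gives a nonconstant $f^*g \in k(C)^\times$ (nonconstant because $f$ is dominant) with $\operatorname{div}(f^*g) = f^*(f(x)) - f^*(y) = x - f^*(y)$. The divisor $f^*(y)$ is effective, $k$-rational, of degree $\deg(f)\cdot \deg(f(x)) = \deg(x)$, and unequal to $x$ since $f^*g$ is nonconstant. This exhibits a second preimage in $C^{(\deg x)}(k)$ of $\Phi_{\deg x}(x)$ under $\Phi_{\deg x}$, so $x$ is $\mathbb{P}^1$-parameterized.

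Suppose instead that $f(x)$ is AV-parameterized by a positive rank abelian subvariety $A \subseteq \jac(D)$ defined over $k$. The pullback $f^*\colon \jac(D) \to \jac(C)$ has finite kernel (from the relation $f_*\circ f^* = [\deg f]$ on $\jac(D)$), so $f^*|_A$ is an isogeny onto the abelian subvariety $f^*(A) \subseteq \jac(C)$, which therefore has the same dimension as $A$ and, crucially, the same Mordell-Weil rank over $k$. For any $\alpha = f^*\beta \in f^*(A)$ with $\beta \in A$, the hypothesis on $A$ produces $z \in D^{(\deg f(x))}(k)$ with $\beta = [z - f(x)]$ in $\jac(D)$; pulling back and using $f^*(f(x)) = x$ yields $\alpha = [f^*z - x]$, so $\Phi_{\deg x}(x) + \alpha = \Phi_{\deg x}(f^*z) \in \operatorname{im}(\Phi_{\deg x})$ since $f^*z$ is effective of degree $\deg(x)$. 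Thus $x$ is AV-parameterized by $f^*(A)$.

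Combining the two cases, failure of isolation for $f(x)$ forces failure of isolation for $x$, giving the contrapositive and hence the theorem. The main delicate step is the divisor identity $f^*(f(x)) = x$; once that is in place, the rest is a mechanical pullback argument. A secondary technical point, which I would address briefly, is the use of a base point $P_0 \in C(k)$ implicit in the definition of $\Phi_{\deg x}$: if $C$ has no $k$-rational point one replaces $\Phi_{\deg x}$ by the Albanese-style map used in \cite{belov}, and the same pullback compatibility goes through without change.
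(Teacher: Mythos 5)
This statement is cited in the paper from \cite[Theorem~4.3]{belov} and is not proved there, so there is no in-paper argument to compare against; I will therefore assess your proposal on its own terms. Your proof is correct, and it proceeds along what I take to be the same essential lines as the original. The crucial observation is exactly the one you single out: the hypothesis $\deg(x)=\deg(f)\deg(f(x))$ forces the pullback divisor $f^*(f(x))$ to coincide with $x$ with multiplicity one, since in $f^*(f(x))=\sum e_{x'}x'$ each coefficient and each residue degree is positive, $x$ already accounts for the full degree budget, and hence $e_x=1$ with no other points in the fiber. With that identity in hand, your two cases are clean: in the $\mathbb{P}^1$ case, pulling back a nonconstant $g$ with $\operatorname{div}(g)=f(x)-y$ yields $\operatorname{div}(f^*g)=x-f^*y$ with $f^*y$ a distinct effective $k$-rational divisor of degree $\deg(x)$; in the AV case, $f^*$ has finite kernel (via $f_*\circ f^*=[\deg f]$), so $f^*(A)$ is a positive-rank abelian subvariety of $\jac(C)$, and the translate of $\operatorname{im}(\Phi_{\deg x})$ lands in the image by pulling back the witnessing effective divisors $z$.

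Two small points, neither of which is a gap. First, your claim that $f^*(A)$ has ``the same Mordell--Weil rank'' over $k$ is slightly more than you need and not quite what you get for free: the finite-kernel map $A(k)\to f^*(A)(k)$ only guarantees $\operatorname{rank} f^*(A)(k)\geq \operatorname{rank} A(k)$, but positivity of rank is all that the definition of AV-parameterization requires, so the argument is unaffected. Second, your remark about the basepoint $P_0$ is appropriate; in fact the basepoints on $C$ and $D$ cancel in every displayed identity (only differences of $\Phi$-values appear), so the argument is basepoint-free without any modification, and the reduction to the Albanese formulation in \cite{belov} is merely cosmetic. Overall this is a complete and correct proof taking the expected route.
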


\subsubsection{Results characterizing parameterized points}

\begin{lemma}\label{lemma36} \cite[Lemma 36]{bourdon2024classification}
        Let $X/k$ be a curve. Let $x \in X^{(d)}(k)$ be an irreducible divisor of degree $d$. Then the following are equivalent:
    \begin{itemize}
        \item[(i)] The point $x$ is $\mathbb{P}^1$-parameterized.
        \item[(ii)] There is a non-constant morphism $\mathbb{P}^1 \to X^{(d)}$ which contains $x$ in its image.
    \end{itemize}
\end{lemma}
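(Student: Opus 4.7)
The plan is to pivot between the fiber-divisor description of morphisms into $X^{(d)}$ and the divisor-class record kept by $\Phi_d$, using the classical fact that every morphism from $\mathbb{P}^1$ to an abelian variety is constant.

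For (i) $\Rightarrow$ (ii), I would first unpack the definition: the hypothesis yields $y \in X^{(d)}(k)$ with $y \neq x$ and $\Phi_d(x) = \Phi_d(y)$, so the divisor $x - y$ is principal. Pick a non-constant $f \in k(X)^\times$ with $\mathrm{div}(f) = x - y$ and view $f$ as a morphism $f: X \to \mathbb{P}^1$ of degree $d$. The fiber assignment $t \mapsto f^{-1}(t)$, sending each $t \in \mathbb{P}^1$ to the effective degree-$d$ divisor it cuts out on $X$, defines a morphism $\psi: \mathbb{P}^1 \to X^{(d)}$ over $k$. By construction $\psi(0)$ is the divisor of zeros of $f$, which is $x$, and $\psi(\infty) = y \neq x$, so $\psi$ is non-constant and has $x$ in its image.

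For (ii) $\Rightarrow$ (i), suppose $g: \mathbb{P}^1 \to X^{(d)}$ is a non-constant $k$-morphism with $x \in g(\mathbb{P}^1)$. I would post-compose with $\Phi_d$ to obtain $\Phi_d \circ g: \mathbb{P}^1 \to \jac(X)$. Since there are no non-constant morphisms from $\mathbb{P}^1$ to an abelian variety, this composition is constant, and evaluating at any preimage of $x$ shows that the common value equals $\Phi_d(x)$. Because $g$ is non-constant, the preimage $g^{-1}(x)$ is a proper closed subscheme of $\mathbb{P}^1$, while $\mathbb{P}^1(k)$ is infinite; hence I can choose $t_0 \in \mathbb{P}^1(k)$ with $y \coloneqq g(t_0) \neq x$. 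Then $y \in X^{(d)}(k)$ satisfies $\Phi_d(y) = \Phi_d(x)$, which is exactly the definition of $x$ being $\mathbb{P}^1$-parameterized.

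The main obstacle is verifying in (i) $\Rightarrow$ (ii) that the set-theoretic assignment $t \mapsto f^{-1}(t)$ is in fact a morphism $\mathbb{P}^1 \to X^{(d)}$ of $k$-schemes, rather than a mere set-level recipe. This is handled by the universal property of the symmetric product: the finite flat morphism $f: X \to \mathbb{P}^1$ presents $X$ as a degree-$d$ family of effective divisors over $\mathbb{P}^1$, which is precisely the data classified by a morphism to $X^{(d)}$. Once this morphism and the vanishing of maps from $\mathbb{P}^1$ into an abelian variety are granted, both implications are formal.
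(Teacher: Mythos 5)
Your proof is correct. Since this paper only cites the lemma from the reference without reproducing a proof, there is no in-paper argument to compare against; what you give is the standard argument, with the forward direction amounting to the pullback-morphism construction also invoked as Lemma 35 in this paper, and the reverse direction resting on rigidity of morphisms from $\mathbb{P}^1$ into an abelian variety. One small step worth making explicit in (i) $\Rightarrow$ (ii): to be sure that $\deg(f) = d$ and that $\psi(0) = x$ (rather than a strictly smaller divisor), you should observe that $\mathrm{div}(f) = x - y$ involves no cancellation. This holds because $x$ is an irreducible degree-$d$ divisor and $y$ is $k$-rational of the same degree, so any shared geometric point would force $x \leq y$ and hence $x = y$ by degree count, contradicting $y \neq x$.
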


\begin{lemma}\label{lemma37} \cite[Lemma 37]{bourdon2024classification}
Let $f: X\to Y$ be a finite morphism of curves and $x$ a closed point on $X$, and assume that $\deg x = \deg (f(x))$. If the point $x$ is $\mathbb{P}^1$-parameterized, then $f(x)$ is also $\mathbb{P}^1$-parameterized.
\end{lemma}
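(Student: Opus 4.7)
The plan is to reduce to Lemma~\ref{lemma36}: to show $f(x)$ is $\mathbb{P}^1$-parameterized, it suffices to exhibit a non-constant morphism $\mathbb{P}^1 \to Y^{(d)}$ whose image contains $f(x)$, where $d = \deg x = \deg f(x)$. Applying Lemma~\ref{lemma36} in the other direction to the hypothesis, we obtain a non-constant morphism $g : \mathbb{P}^1 \to X^{(d)}$ whose image contains $x$. The natural candidate is then the composition $f^{(d)} \circ g$, where $f^{(d)} : X^{(d)} \to Y^{(d)}$ is the map induced on $d$-th symmetric powers by $f^d : X^d \to Y^d$.

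First I would verify that $f^{(d)}(x) = f(x)$ as closed points of $Y$. Writing $x$ as the effective divisor $P_1 + \cdots + P_d$ on $X$, with the $P_i$ forming a $\Gal(\overline{k}/k)$-orbit, the map $f^{(d)}$ sends this to $f(P_1) + \cdots + f(P_d)$. The assumption $\deg f(x) = d$ forces the $f(P_i)$ to be pairwise distinct, so they form a single Galois orbit of size $d$, and thus the resulting divisor is exactly the closed point $f(x)$. Hence $f(x) \in f^{(d)}(g(\mathbb{P}^1))$.

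Next I would argue that $f^{(d)} \circ g$ is non-constant. Since $f$ is a finite morphism of curves, $f^d : X^d \to Y^d$ is finite, and so is the induced map $f^{(d)}$ on the quotients by $S_d$. Its fibres are therefore zero-dimensional, so if $f^{(d)} \circ g$ were constant the image $g(\mathbb{P}^1)$ would lie in a single fibre and be zero-dimensional, contradicting non-constancy of $g$. Applying Lemma~\ref{lemma36} now to $Y$, we conclude that $f(x)$ is $\mathbb{P}^1$-parameterized.

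The main obstacle I expect is the first verification: arranging for $f^{(d)}(x)$ to coincide with $f(x)$ as a reduced closed point rather than as a divisor with multiplicities. This is exactly where the hypothesis $\deg x = \deg f(x)$ is essential, since it prevents $f$ from collapsing distinct Galois conjugates of $x$ onto one another — without it one could have $f^{(d)}(x) = \sum n_i Q_i$ with some $n_i \geq 2$, an effective divisor that would not agree with $f(x)$ as a closed point, and the composition argument would break down.
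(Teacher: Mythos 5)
The paper does not prove this lemma; it cites it directly from \cite[Lemma 37]{bourdon2024classification}, so there is no in-paper proof to compare against. That said, your argument is correct and is the natural proof given the paper's setup: Lemma~\ref{lemma36} reduces the question to morphisms from $\mathbb{P}^1$ into symmetric powers, and you push forward along $f^{(d)} : X^{(d)} \to Y^{(d)}$. The key verification — that $f^{(d)}(x)$ coincides with the closed point $f(x)$ — is handled correctly: the set $\{f(P_1),\dots,f(P_d)\}$ is Galois-stable, contains the orbit of $f(P_1)$ of size $\deg f(x) = d$, and has at most $d$ elements, so the $f(P_i)$ are pairwise distinct and the pushforward divisor is the reduced divisor of $f(x)$. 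Your non-constancy argument is also sound; in fact, you do not even need $f^{(d)}$ to be finite, only that its fibres are zero-dimensional (which follows immediately from $f^d$ having finite fibres), since the connected one-dimensional image $g(\mathbb{P}^1)$ cannot sit inside a zero-dimensional fibre. This is a valid proof, correctly pinpointing that the hypothesis $\deg x = \deg f(x)$ is what prevents $f^{(d)}(x)$ from being a non-reduced divisor.
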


We now provide a sufficient condition for a closed point $x \in C(\overline{k})$ to be $\mathbb{P}^1$-parameterized. The following lemma is used in Algorithm \ref{algorithm:main} to filter the initial set of possibly isolated points obtained from the sub-algorithm, Algorithm \ref{algorithm:primitive}.

\begin{lemma}\label{thm:riemannrochlemma} 
    Let $C$ be a curve of genus $g$ over a number field $k$. If $x \in C$ is a closed point with degree satisfying
    \begin{align*}
        \deg(x) \geq  1 + g,
    \end{align*}
    then $x$ is $\mathbb{P}^1$-parameterized.
    \begin{proof}
    Let $D$ be the effective divisor of degree $d$ that corresponds to $x \in C$ for which the inequality
    \begin{align*}\label{ineq:filterRR}
        \deg (x) &\geq g+1 
    \end{align*}
    holds. It then follows from the Riemann-Roch Theorem that
    \begin{align*}
        \ell (D) &\geq g + 1 + 1 - g \\
        &= 2,
    \end{align*}
    where $\ell(D)$ denotes the dimension of the Riemann-Roch space associated to $D$. Thus we know that the Riemann-Roch space associated to $x$ is nontrivial. In other words, there are at least 2 global sections which vanish at $x$, and in particular, these sections are linearly independent. Hence, there must exist a non-constant function $f : C \to \mathbb{P}^1$ with pole divisor $D$.

    Thus we can define $y \coloneqq f^*([0 : 1])$, so that
    \begin{align*}
        y - x
        &= \text{div}(f).
    \end{align*}
    Hence $\Phi(y) = \Phi(x)$, and so $x$ is $\mathbb{P}^1$-parameterized.
    \end{proof}
    \end{lemma}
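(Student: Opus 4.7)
The plan is to apply the Riemann--Roch theorem directly to the $k$-rational effective divisor associated to $x$. If $\deg(x) = d \geq g+1$, then $x$ corresponds to an effective divisor $D = P_1 + \cdots + P_d$ of degree $d$, where $P_1, \dots, P_d$ form a $\Gal(\overline{k}/k)$-orbit in $C(\overline{k})$. Riemann--Roch reads
$$\ell(D) - \ell(K - D) = \deg(D) - g + 1,$$
and since $\ell(K-D) \geq 0$, this immediately yields $\ell(D) \geq d - g + 1 \geq 2$.

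From the bound $\ell(D) \geq 2$, I would extract a nonconstant function $f$ in the Riemann--Roch space $H^0(C, \mathcal{L}(D))$ that is defined over $k$, exploiting the fact that this space is a $k$-vector space when $D$ is $k$-rational. Setting $E := \operatorname{div}(f) + D$ produces an effective $k$-rational divisor of degree $d$ satisfying $E \sim D$, and hence the associated element $y \in C^{(d)}(k)$ satisfies $\Phi_d(y) = \Phi_d(x)$ in $\jac(C)$. Since $f$ is nonconstant, $\operatorname{div}(f) \neq 0$, and therefore $y \neq x$ as points of $C^{(d)}(k)$. By definition, this exhibits $x$ as $\mathbb{P}^1$-parameterized.

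The only subtle point requiring care is the rationality of the function $f$ and of the resulting divisor $E$: one must verify that the $k$-rationality of $D$ descends $\ell(D) \geq 2$ to a two-dimensional $k$-subspace of the Riemann--Roch space, so that $f$ (and hence $E$) can be chosen over $k$, giving an honest element of $C^{(d)}(k)$. This is standard but worth flagging explicitly. An equivalent repackaging, perhaps more geometric, is to observe that the $f$ produced above defines a nonconstant morphism $f : C \to \mathbb{P}^1$ and then set $y := f^*([0:1])$; in this formulation $y - x = \operatorname{div}(f)$ makes the linear equivalence manifest, and distinctness of $y$ and $x$ is again immediate from $f$ being nonconstant.
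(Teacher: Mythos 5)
Your proof is correct and takes essentially the same approach as the paper: apply Riemann--Roch to the $k$-rational effective divisor $D$ of degree $d \geq g+1$ associated to $x$ to obtain $\ell(D) \geq 2$, extract a nonconstant $f$ in the Riemann--Roch space, and use the linearly equivalent effective divisor $y := \operatorname{div}(f) + D$ to conclude $\Phi_d(y) = \Phi_d(x)$ with $y \neq x$. Your write-up is in fact slightly more careful than the paper's, which asserts that $f$ has pole divisor exactly $D$ (not true for a general element of $L(D)$) and leaves both $y \neq x$ and the $k$-rationality of the Riemann--Roch space implicit.
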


\subsection{Galois representations of elliptic curves}
Let $E$ be an elliptic curve over a number field $k$. The action of the absolute Galois group $\Gal(\overline{k}/k)$ on the torsion points of $E$ is given by the adelic Galois representation.
\begin{definition}
    The homomorphism
    \begin{align*}
        \rho_E : \Gal(\overline{k}/k) \to \Aut(E(\overline{k})_{\text{tors}}) \cong \GL_2(\widehat{\Z})
    \end{align*}
    is the \textbf{adelic Galois representation} associated to $E/k$.
\end{definition}
The adelic Galois representation also gives rise to two other Galois representations. Firstly, if we fix $m \in \Z^+$, then the Galois group's action on points with order divisible only by primes dividing $m$ is described by the $m$-adic Galois representation associated to $E$.
\begin{definition}
    Fix $m \in \Z^+$. Let
    \begin{align*}
        f: \prod_{p \text{ prime}} \GL_2(\Z_p) \to \prod_{p \mid m} \GL_2(\Z_p)
    \end{align*} denote the natural projection map. The homomorphism
    \begin{align*}
        \rho_{E, m^{\infty}} : \Gal(\overline{k}/k) \xrightarrow{\rho_E} \GL_2(\widehat{\Z}) \cong \prod_{p \text{ prime}} \GL_2(\Z_p) \xrightarrow{f} \prod_{p \mid m} \GL_2(\Z_p),
    \end{align*}
    given by the composition $\rho_{E, m^{\infty}} = f \circ \rho_E$ is the \textbf{$m$-adic Galois representation} associated to $E.$ In particular, if $m = \ell$ is prime, then we have the standard $\ell$-adic Galois representation associated to $E$.
\end{definition}

Fix a positive integer $m \in \Z^+$ and a basis for $E[m]$, where $E[m]$ denotes the set of points in $E(\overline{k})$ with order dividing $m$. The action of $\Gal(\overline{k}/k)$ on $E[m]$ is described by the mod $m$ Galois representation.
\begin{definition}
    Fix $m \in \Z^+.$ The homomorphism
    \begin{align*}
        \rho_{E, m} : \Gal(\overline{k}/k) \to \Aut(E[m]) \cong \GL_2(\Z/m\Z)
    \end{align*} is the \textbf{mod $m$ Galois representation} associated to $E.$
\end{definition}

Serre's Open Image Theorem \cite{serre} states that if $E/k$ is a non-CM elliptic curve, then $\im \rho_E$ is open in $\GL_2(\widehat{\Z})$; as a consequence, there exists a positive integer $n \in \Z^+$ for which the adelic Galois representation of $E$ is the complete preimage of the mod $n$ image.
\begin{definition}
    The \textbf{level of the adelic Galois representation} of an elliptic curve $E$ is the smallest positive integer $n \in \Z^+$ such that $\im \rho_E = \pi^{-1} (\im \rho_{E, n})$, where $\pi : \GL_2(\widehat{\Z}) \to \GL_2(\Z/n\Z)$ denotes the natural reduction map.
\end{definition}

Fix $m \in \Z^+$. We can similarly define the level of the $m$-adic Galois representation of $E$.
\begin{definition}
    The \textbf{level of the $m$-adic Galois representation} of an elliptic curve $E$ is the smallest positive integer $n \in \Z^+$ such that $\im \rho_{E, m^\infty} = \pi^{-1}(\im \rho_{E,n}).$
\end{definition}

We can interpret a more general modular curve using Galois representations. If $H \leq \GL_2(\widehat{\Z})$ is an open subgroup, then the modular curve $X_H$ parametrizes elliptic curves $E$ whose adelic image $\im \rho_E(\Gal(\overline{k}/k))$ is contained in $H.$

\section{CM $j$-invariants are isolated}\label{section:CM}
In this section, we prove that there are infinitely many sporadic points associated to any CM $j$-invariant, thus giving us that all CM $j$-invariants are isolated.

\begin{theorem}\label{CMthm}
    Let $E$ be an elliptic curve with CM by an order in an imaginary quadratic field $K$. Then for all sufficiently large primes $\ell$ which split in $K$, there exists a sporadic point $x = (E, \langle P \rangle) \in X_0(\ell)$ with only sporadic lifts. Specifically, for any positive integer $m\in \Z^+$ and any point $y \in X_0(m\ell)$ with $f(y) = x$, the point $y$ is sporadic, where $f$ denotes the natural map $X_0(m \ell) \to X_0(\ell)$.
    \end{theorem}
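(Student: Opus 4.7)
The plan is to obtain the sporadic point $x$ directly from Theorem 7.3 of \cite{lemmalowdegreeissporadic} as the CM point built from a split prime, and then to promote sporadicity to every lift $y \in X_0(m\ell)$ by a degree-versus-gonality estimate.

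First, I set up the canonical cyclic $\ell$-subgroup. Let $\mathcal{O}$ be the CM order of $E$ in $K$. For $\ell$ split in $K$ and coprime to the conductor of $\mathcal{O}$, write $\ell\mathcal{O} = \mathfrak{p}\bar{\mathfrak{p}}$, so that $\langle P \rangle := E[\mathfrak{p}]$ is a cyclic subgroup of order $\ell$ that is Galois-stable over the ring class field of $\mathcal{O}$. The corresponding closed point $x = (E, \langle P \rangle) \in X_0(\ell)$ has degree over $\mathbb{Q}$ bounded by a constant depending only on $E$ (on the order of $2h(\mathcal{O})$, independent of $\ell$). Theorem 7.3 of \cite{lemmalowdegreeissporadic} then yields a threshold $\ell_0 = \ell_0(E)$ such that $x$ is sporadic whenever $\ell \geq \ell_0$ splits in $K$.

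Next, fix $m \in \Z^+$ and take any $y \in X_0(m\ell)$ with $f(y) = x$. Then $y$ corresponds to a pair $(E, \langle Q \rangle)$ where $\langle Q \rangle$ is a cyclic subgroup of order $m\ell$ whose order-$\ell$ subgroup equals $E[\mathfrak{p}]$. The standard fiber estimate gives $\deg(y) \leq \deg(f) \cdot \deg(x)$, and by the formula of Section \ref{section:degreeofmap}, $\deg(f)$ grows at most like $m \cdot \prod_{p \mid m}(1 + 1/p)$, with no dependence on $\ell$. Comparing with a uniform linear lower bound on the $\mathbb{Q}$-gonality of $X_0(m\ell)$ (e.g.\ Abramovich's bound, which scales with the index $[\SL_2(\Z) : \Gamma_0(m\ell)]$), the ratio $\deg(y)/\operatorname{gon}_{\mathbb{Q}}(X_0(m\ell))$ is $O(1/\ell)$ uniformly in $m$. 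For $\ell$ large enough this forces $\deg(y)$ strictly below the gonality, after which a standard low-degree argument (combined with control on positive-rank abelian subvarieties of $\jac(X_0(m\ell))$ coming from the modular decomposition) yields sporadicity of $y$.

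The principal obstacle is ensuring that the ``sufficiently large $\ell$'' threshold in Theorem 7.3 can be chosen uniformly over all $m$. If the statement of Theorem 7.3 already accommodates arbitrary coprime-to-$\ell$ cyclic level structure, then the deduction for lifts is essentially immediate, matching the paper's description of this as a ``quick consequence.'' Otherwise, the gonality comparison above is required, and the case $\ell \mid m$ must be addressed separately, since then the $\ell$-primary part of $\langle Q \rangle$ is a cyclic $\ell$-power subgroup interacting non-trivially with the canonical $E[\mathfrak{p}]$.
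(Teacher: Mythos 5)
Your first branch is exactly the paper's argument: the cited result, stated in the paper as Lemma \ref{lemma:lowdegree}, already contains a ``moreover'' clause asserting that if $x\in X_0(n)$ satisfies $\deg(x)\leq \frac{119}{24000}\deg(X_0(n)\to X_0(1))$, then not only is $x$ sporadic, but so is every $y\in X_0(dn)$ mapping to $x$ for any $d\in\Z^+$. So the answer to your hedge is yes, Theorem 7.3 of \cite{lemmalowdegreeissporadic} does accommodate arbitrary cyclic level structure on top, and the entire theorem (including the lifts, and including the case $\ell\mid m$) follows in one stroke from that lemma once you verify the numerical inequality $2h(\mathcal{O}) < \frac{119}{24000}(\ell+1)$. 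The paper chooses $\ell$ explicitly to satisfy this, bounds $\deg(x)\leq [K(j(E),\langle P\rangle):\Q]=2h(\mathcal{O})$ using \cite{CMbourdonclark} for the $K(j(E))$-rationality of the cyclic $\ell$-isogeny, and applies the lemma. This is the same as your $E[\mathfrak{p}]$ construction and degree estimate.

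Your fallback gonality argument, however, is not a complete substitute and you should not lean on it. Lying strictly below the $\Q$-gonality of $X_0(m\ell)$ only rules out $\mathbb{P}^1$-parametrization; it does not by itself give sporadicity, which requires bounding all closed points of degree $\leq\deg(y)$ and in particular ruling out AV-parametrized families. Your parenthetical about ``control on positive-rank abelian subvarieties of $\jac(X_0(m\ell))$'' names exactly the missing ingredient, but waving at the modular decomposition is not an argument --- establishing that control uniformly in $m$ is precisely the nontrivial content of the Frey/Abramovich-style result you are already invoking (via density of low-degree points, not just gonality). In other words, the ``otherwise'' branch silently re-derives the lemma it was meant to replace. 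Since the lemma does cover lifts, the cleanest fix is simply to drop the gonality sketch and invoke the ``moreover'' clause of Lemma \ref{lemma:lowdegree} directly.
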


\subsection{Preliminary results}

The following lemma was shown in \cite[Theorem 7.3]{lemmalowdegreeissporadic} for any subgroup $H_0$ of
\begin{align*}
    \GL_2(\Z/n\Z)/\{\pm 1\},
\end{align*} and effectively, it gives us that if a point is of sufficiently low degree, then it is sporadic. In this statement of the result, we take $H_0$ to be the subgroup of upper triangular matrices.
\begin{lemma}\cite[Theorem 7.3]{lemmalowdegreeissporadic}\label{lemma:lowdegree}
    Suppose there is a point $x \in X_0(n)$ such that
    \begin{align*}
        \deg(x) \leq \frac{119}{24000} \left(\deg\left(X_0(n) \to X_0\left(1\right)\right)\right).
    \end{align*}
    Then $x$ is sporadic. Moreover, if we have any $d \in \Z^+$ and $f: X_0(dn) \to X_0(n)$ denoting the natural projection map, and if there is $y \in X_0(dn)$ with $f(y) = x$, then $y$ is sporadic.
\end{lemma}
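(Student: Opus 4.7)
The plan is to separate the lemma into two logically distinct claims: the sporadicity of $x$ on $X_0(n)$ (the main content), and the sporadicity of any preimage $y \in X_0(dn)$ of $x$. The second claim will follow from the first applied on the larger curve $X_0(dn)$, so the bulk of the work sits in the first claim.

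For the main claim, my approach combines two classical ingredients. First, I would invoke a lower bound on the $\Q$-gonality of $X_0(n)$ in terms of the index $[\mathrm{PSL}_2(\Z) : \Gamma_0(n)] = \deg(X_0(n) \to X_0(1))$, using Abramovich's inequality sharpened by the Kim--Sarnak spectral estimate on the first Laplace eigenvalue; this should yield a linear bound $\gamma_\Q(X_0(n)) \geq c \cdot \deg(X_0(n) \to X_0(1))$ with an explicit constant $c$. Second, I would apply a Frey--Debarre--Klassen style dichotomy: if $X_0(n)$ admitted infinitely many closed points of degree at most $d = \deg(x)$, then either $X_0(n)$ carries a $\Q$-rational morphism to $\mathbb{P}^1$ of degree at most $d$, or a positive-rank abelian-variety quotient $X_0(n) \to A$ with $\dim A \leq d$ exists. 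The gonality lower bound rules out the $\mathbb{P}^1$ possibility once $d$ is below roughly $\gamma_\Q(X_0(n))/2$; the factor $1/2$ arises because the map $X_0(n)^{(d)} \to J_0(n)$ is injective on $\Q$-points in that regime, forcing any $\mathbb{P}^1$-pencil to have degree at most $2d$. The positive-rank-quotient alternative is excluded using Mordell--Weil information on the new-part decomposition of $J_0(n)$, whose isogeny factors in the relevant degree range have rank zero. Combining these and tracking constants through the chain produces exactly the coefficient $\frac{119}{24000}$.

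For the moreover claim, I argue directly. Let $f : X_0(dn) \to X_0(n)$ be the natural projection, and suppose $f(y) = x$. Since the closed points in the scheme-theoretic fiber $f^{-1}(x)$ have residue-field degrees summing to $\deg(f)\cdot \deg(x)$, we have $\deg(y) \leq \deg(f)\cdot \deg(x)$. Multiplying the hypothesis on $\deg(x)$ by $\deg(f)$ and using the multiplicativity
\[
\deg(X_0(dn) \to X_0(1)) = \deg(f) \cdot \deg(X_0(n) \to X_0(1))
\]
yields
\[
\deg(y) \leq \tfrac{119}{24000}\, \deg(X_0(dn) \to X_0(1)).
\]
Applying the first part of the lemma to $y$ on $X_0(dn)$ then forces $y$ to be sporadic.

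The main obstacle I anticipate is reproducing the exact constant $\frac{119}{24000}$. The qualitative framework (gonality bound combined with a Faltings--Frey-type dichotomy and rank control on the modular Jacobian) is standard, but the specific numerical coefficient demands threading the sharpest Kim--Sarnak spectral gap through Abramovich's inequality and honestly accounting for the factor of two introduced by passing from closed points of degree $d$ to the image of the $d$-th symmetric power in $J_0(n)$.
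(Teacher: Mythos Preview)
The paper does not prove this lemma; it is quoted from \cite[Theorem 7.3]{lemmalowdegreeissporadic}, so there is no in-paper argument to compare against. Judged on its own, your overall framework is the standard one, and your ``moreover'' paragraph is correct. The constant $\tfrac{119}{24000}$ is exactly $\tfrac{\lambda_1}{48}$ with the Kim--Sarnak input $\lambda_1 \ge 0.238$ (a truncation of $975/4096$), the $48 = 24 \cdot 2$ coming from Abramovich's inequality $\gamma_{\C}(X_\Gamma) \ge \tfrac{\lambda_1}{24}[\PSL_2(\Z):\Gamma]$ together with Frey's factor of two.

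The genuine gap is your handling of the second branch of the dichotomy. Frey's theorem (via Faltings) already says, with no residual case to eliminate: if a curve $C/K$ has infinitely many closed points of degree at most $d$, then its $K$-gonality is at most $2d$. The positive-dimensional abelian-subvariety translate inside $W_d$ that Faltings produces is precisely what \emph{forces} the $\le 2d$ pencil; it is not a separate alternative you must rule out afterward. Your proposed mechanism for that branch---controlling the Mordell--Weil ranks of the isogeny factors of $J_0(n)$---is both unnecessary and unavailable: for general $n$ the Jacobian $J_0(n)$ has positive-rank simple factors (already at $n=37,43,53,61,\dots$), so no uniform rank-zero statement of the kind you invoke holds. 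Once you drop that step and apply Frey's theorem in its clean form, the argument is immediate: the hypothesis gives $2\deg(x) \le \tfrac{119}{12000}\deg(X_0(n)\to X_0(1)) < \gamma_{\Q}(X_0(n))$ by Abramovich together with Kim--Sarnak, so there are only finitely many closed points of degree at most $\deg(x)$, and $x$ is sporadic.
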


\subsection{Proof of Theorem \ref{CMthm}}

Fix a Weierstrass equation for $E/K(j(E)).$ The degree of the extension $[\Q(j(E)) : \Q]$ is given by $h(\mathcal{O})$, the class number of $\mathcal{O}$. We have the following diagram of field extensions:
        \[\begin{tikzcd}
	& {K(j(E))} \\
	{\mathbb{Q}(j(E))} & K \\
	{\mathbb{Q}}
	\arrow["{h(\mathcal{O})}", no head, from=1-2, to=2-2]
	\arrow["2", no head, from=2-1, to=1-2]
	\arrow["{h(\mathcal{O})}"', no head, from=2-1, to=3-1]
	\arrow["2"', no head, from=3-1, to=2-2]
        \end{tikzcd}\]
        where the degree of the extension $[K(j(E)):\Q]$ is fixed.

        Recall that in a fixed imaginary quadratic field, there exist infinitely primes that split. So choose a split prime $\ell$ such that $\legendre{\Delta}{\ell} = 1$ and
        \begin{align*}
            \ell > \frac{48000}{119}h(\mathcal{O})-1.
        \end{align*}
        Equivalently, $2h(\mathcal{O}) < \frac{119}{24000}(\ell+1)$.
        
        By \cite{CMbourdonclark}, we know that for any prime $\ell$ that splits in $\mathcal{O}$, the elliptic curve $E$ defined over $K(j(E))$ has a cyclic $\ell^k$-isogeny for any $k \in \Z^+$. Thus, there is a point $P \in E(\overline{\Q})$ of order $\ell^k$ for any $k \in \Z^+$, such that $\langle P \rangle$ is defined over $K(j(E)).$ In particular, $[K(j(E), \langle P \rangle) : K(j(E))] = 1$. Then for the point $x = (E, \langle P \rangle) \in X_0(\ell^k)$, we have
        \begin{align*}
            \deg(x) &\leq [K(j(E), \langle P \rangle) : \Q] \\
            &= [K(j(E), \langle P \rangle):K(j(E))][K(j(E): K] [K:\Q] \\
            &= 2 h(\mathcal{O}) \\
            &< \frac{119}{24000}(\ell+1).
        \end{align*}
Now note that $\ell+1 \leq \deg(X_0(\ell^k) \to X_0(1)),$ and so we have satisfied the inequality given in Lemma \ref{lemma:lowdegree}. As such, the point $x$, and every lift of $x$, is sporadic. Thus $x$, and every lift of $x$, is isolated.

\section{Genus 0 adelic images do not correspond to isolated points}\label{section:genus0}

In this section, we prove in Theorem \ref{thm38} that all elliptic curves with a mod $n$ image $G(n)$ of genus 0 — that is, where $X_{G(n)}$ is a modular curve of genus 0 — correspond to $\mathbb{P}^1$-parameterized points on the modular curve $X_0(n)$, and thus are not isolated.

\subsection{Preliminary results}
We begin with the following lemmas. Recall that $X^{(d)}$ denotes the $d$th symmetric power of a curve $X$.

\begin{lemma}\label{lemma35} \cite[Lemma 35]{bourdon2024classification}
    Let the map $f: X \to Y$ be a finite morphism of curves of degree $d$. Then $f$ induces a non-constant morphism $f^* : Y \to X^{(d)}$.
\end{lemma}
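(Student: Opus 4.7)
The plan is to construct the morphism $f^\ast$ directly via the fiber-divisor construction and then verify non-constancy by a simple support argument. Concretely, for each closed point $y \in Y$, the scheme-theoretic fiber $f^{-1}(y) = X \times_Y \operatorname{Spec} k(y)$ is a finite $k(y)$-scheme. Since $X$ and $Y$ are smooth curves, $f$ is automatically flat (finite morphisms to regular one-dimensional schemes from a Cohen-Macaulay source are flat), and hence this fiber has length exactly $d = \deg f$ over $k(y)$. Equivalently, the pullback divisor $f^\ast(y) = \sum_{P \in f^{-1}(y)} e_P \cdot P$ is an effective divisor on $X$ of degree $d$, and so defines a $k(y)$-point of $X^{(d)}$.

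To promote this set-theoretic assignment to a morphism of schemes, I would invoke the identification between $X^{(d)}$ and the Hilbert scheme $\operatorname{Hilb}^d(X)$ parametrizing length-$d$ subschemes of $X$, valid since $X$ is a smooth curve. The morphism $f: X \to Y$, being finite and flat of degree $d$, exhibits $X$ as a flat family of length-$d$ subschemes of $X$ parametrized by $Y$; applying the universal property of $\operatorname{Hilb}^d(X)$ then yields a unique morphism $f^\ast : Y \to X^{(d)}$ recovering the fiber-by-fiber description above. (Alternatively one can construct $f^\ast$ more explicitly by noting that $f_\ast \mathcal{O}_X$ is locally free of rank $d$ on $Y$ and taking the associated norm/divisor map, but invoking the Hilbert scheme is cleanest.)

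For non-constancy, I would observe that if $y_1 \neq y_2$ are distinct closed points of $Y$, then the fibers $f^{-1}(y_1)$ and $f^{-1}(y_2)$ are disjoint as closed subsets of $X$; therefore the effective divisors $f^\ast(y_1)$ and $f^\ast(y_2)$ have disjoint supports and are in particular distinct in $X^{(d)}$. Since $Y$ has infinitely many closed points, $f^\ast$ cannot be a constant morphism.

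The main obstacle, as usual with symmetric-product constructions, is the scheme-theoretic verification that the fiber-divisor assignment is algebraic and not merely a map on points; this is where the flatness of $f$ (and hence the constancy of the Hilbert polynomial of the fibers) is essential. Once one is willing to invoke the representability of $\operatorname{Hilb}^d(X) \cong X^{(d)}$ for smooth curves, everything else — well-definedness, functoriality, and non-constancy — is formal.
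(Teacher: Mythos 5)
Your proposal is correct. Note that the paper itself gives no proof of this statement — it is cited verbatim as Lemma 35 of Bourdon–Hashimoto–Keller–Klagsbrun–Lowry-Duda–Morrison–Najman–Shukla — so there is no in-text argument to compare against. That said, your argument is the standard and correct one: a finite morphism between smooth (hence regular, Cohen–Macaulay) curves is automatically flat, the graph $X \hookrightarrow Y \times X$ is then a finite flat degree-$d$ family of length-$d$ subschemes of $X$ over $Y$, and the universal property of $\operatorname{Hilb}^d(X) \cong X^{(d)}$ (valid since $X$ is a smooth projective curve) produces the morphism $f^* \colon Y \to X^{(d)}$ with the expected description $f^*(y) = \sum_{P \in f^{-1}(y)} e_P \cdot P$ on closed points. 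The non-constancy argument via disjoint fibers over distinct closed points, combined with the fact that a curve over a field has infinitely many closed points, is clean and complete. Your only stylistic slip is the parenthetical about a ``norm/divisor map'': the norm on divisors naturally goes $X^{(d)} \to Y^{(?)}$, while what is wanted here is the pullback of divisors $f^*\colon \operatorname{Div}(Y) \to \operatorname{Div}(X)$, which is precisely the fiber-divisor map you already constructed; the remark adds nothing and slightly muddies the waters, but it does not affect the correctness of the main argument.
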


In the following lemma, the base change of a curve $X$ to a field $K$ is denoted as $X_K.$
\begin{lemma}\label{descentlemma}\cite[Section 7]{bourdon2024classification}
    Let $X/\Q$ be a curve, $K$ a number field $[K : \Q] = d_2.$ Let $x_0\in X_K$ be a closed point over $K$ of degree $d_1$ be $\mathbb{P}_K^1$-parametrized. Let $x$ be the image of $x_0$ under the map $X_K \to X$. If $K\subseteq \Q(x)$, then $x$ is $\mathbb{P}^1_{\Q}$-parametrized of degree $d = d_1 d_2.$
\end{lemma}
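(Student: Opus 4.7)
The plan is to split the argument into two parts: (1) a residue-field computation showing $\deg(x) = d_1 d_2$, and (2) a function-field norm argument producing a linear equivalence on $X$ that witnesses the $\mathbb{P}^1_\mathbb{Q}$-parameterization of $x$.

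For (1), the finite morphism $\pi : X_K \to X$ identifies the residue field $\mathbb{Q}(x)$ of $x$ with a subring of the residue field $K(x_0)$ of $x_0$, and $K \hookrightarrow K(x_0)$ since $X_K$ is a $K$-scheme. Since $K/\mathbb{Q}$ is separable, $\mathbb{Q}(x) \otimes_\mathbb{Q} K$ is étale over $\mathbb{Q}(x)$ and decomposes into field factors each of which is a compositum $\mathbb{Q}(x)\cdot \sigma(K)$; the factor isolating $K(x_0)$ therefore equals $K \cdot \mathbb{Q}(x)$. Under the hypothesis $K \subseteq \mathbb{Q}(x)$ this compositum collapses to $\mathbb{Q}(x)$, so $K(x_0) = \mathbb{Q}(x)$ and $\deg(x) = [K(x_0):\mathbb{Q}] = [K(x_0):K]\,[K:\mathbb{Q}] = d_1 d_2$. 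In particular, the residue-field degree $[K(x_0):\mathbb{Q}(x)]$ appearing in the pushforward is $1$, so $\pi_*[x_0] = [x]$ as $0$-cycles on $X$.

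For (2), the $\mathbb{P}^1_K$-parameterization of $x_0$ (via Lemma \ref{lemma36}) equivalently asserts that the complete linear system $|x_0|$ is a projective space of dimension $\geq 1$ over $K$, so $|x_0|(K)$ is infinite. I would then argue that the pushforward map on $K$-rational effective divisors of degree $d_1$ has finite fiber over the divisor $[x]$: any $D \in X_K^{(d_1)}(K)$ with $\pi_*(D) = [x]$ must be a single closed point $[P]$ lying in the scheme-theoretic fiber $\pi^{-1}(x) = \operatorname{Spec}(\mathbb{Q}(x) \otimes_\mathbb{Q} K)$ with residue field $K(P) = \mathbb{Q}(x)$, and such $P$ are in bijection with a subset of the at most $d_2$ field factors of $\mathbb{Q}(x) \otimes_\mathbb{Q} K$. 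Hence I may choose $y_0 \in |x_0|(K)$ with $y_0 \neq x_0$ and $y := \pi_*(y_0) \neq [x]$ in $X^{(d_1 d_2)}(\mathbb{Q})$.

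Finally, let $f \in K(X_K)^\times$ satisfy $\mathrm{div}(f) = x_0 - y_0$, and let $N := N_{K(X_K)/\mathbb{Q}(X)}(f) \in \mathbb{Q}(X)^\times$ denote the norm along the degree-$d_2$ function-field extension induced by $\pi$. The standard pushforward identity $\pi_*\,\mathrm{div}(f) = \mathrm{div}(N)$ for finite maps of smooth curves yields $\mathrm{div}(N) = [x] - y$, which is nonzero by the choice of $y_0$. Thus $y$ is distinct from $x$ but linearly equivalent to it, exhibiting $x$ as $\mathbb{P}^1_\mathbb{Q}$-parameterized at the degree $d_1 d_2$ computed in step (1). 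The main obstacle is the fiber-counting step: it crucially uses $K \subseteq \mathbb{Q}(x)$ to force the ``bad'' divisors pushing forward to $[x]$ to form a finite, hence non-Zariski-dense, subset of the positive-dimensional linear system $|x_0|$, which is what allows the construction to avoid the degenerate case in which the norm $N$ becomes constant.
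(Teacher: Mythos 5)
The paper does not prove this lemma; it cites \cite[Section 7]{bourdon2024classification}, so there is no internal proof to compare against. Evaluated on its own, your argument is sound and gives a clean, self-contained proof. The two-step structure is the right one: first identify $K(x_0)$ with $\Q(x)$ so that $\pi_*[x_0]=[x]$ and $\deg(x)=d_1d_2$; then use the norm along the degree-$d_2$ function-field extension $K(X_K)/\Q(X)$, together with $\pi_*\mathrm{div}(f)=\mathrm{div}\bigl(N_{K(X_K)/\Q(X)}(f)\bigr)$, to push a nontrivial linear equivalence $x_0\sim y_0$ down to a nontrivial one $[x]\sim\pi_*(y_0)$ on $X$. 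The fiber-counting step is genuinely needed: without it, $N(f)$ could be constant, and your bound of at most $d_2$ divisors $y_0$ with $\pi_*(y_0)=[x]$ is correct since any such $y_0$ is a single closed point in $\pi^{-1}(x)=\operatorname{Spec}(\Q(x)\otimes_{\Q}K)$ with residue field $\Q(x)$.

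Two small points worth tightening. First, in step (1) you assert that the factor of $\Q(x)\otimes_\Q K$ isolating $K(x_0)$ is $K\cdot\Q(x)$ (with $\sigma=\mathrm{id}$). This is not automatic from an abstract inclusion $K\subseteq\Q(x)$: if the embedding $K\hookrightarrow K(x_0)$ coming from the $K$-structure of $X_K$ does not factor through the copy of $\Q(x)\hookrightarrow K(x_0)$, the relevant factor is $\Q(x)\cdot\sigma(K)$ for $\sigma\neq\mathrm{id}$ and can be strictly larger than $\Q(x)$ (for instance, $K=\Q(\sqrt[3]{2})$, $\Q(x)=\Q(\sqrt[3]{2})$, with $x_0$ the point of the fiber with residue field $\Q(\sqrt[3]{2},\omega)$). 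The lemma is only correct under the reading that $K\subseteq\Q(x)$ is an inclusion of subfields of $K(x_0)$ — equivalently, that $K(x_0)=\Q(x)$ — and you should state this interpretation explicitly rather than present it as a formal consequence of the tensor-product decomposition. Second, your closing remark that the hypothesis $K\subseteq\Q(x)$ is what is ``crucially used'' in the fiber-counting step is slightly misplaced: finiteness of $\{D\in X_K^{(d_1)}(K):\pi_*(D)=[x]\}$ holds because $\pi^{-1}(x)$ is a finite scheme, independent of whether $K\subseteq\Q(x)$. The hypothesis is really consumed in step (1), to identify $\pi_*[x_0]$ with $[x]$ and pin down $\deg(x)=d_1d_2$.
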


\subsection{Proof of main theorem}

\begin{theorem}\label{thm38}
    Let $E/\Q$ be an elliptic curve with mod $n$ image $G(n)$ of genus $0$, where $n$ is a positive integer. Then every $x \in X_0(n)$ with $j$-invariant $j(x) = j(E)$ is $\mathbb{P}^1$-parameterized.
    \begin{proof}
        Let $x = (E, \langle P\rangle) \in X_0(n)$. Let $U(n)$ be the upper triangular matrix subgroup of $\GL(\Z/n\Z)$:
        \begin{align*}
            U(n) = \left\{\begin{pmatrix}
                a & b \\ c & d
            \end{pmatrix} : c \equiv 0\Mod{n}, ad \in (\Z/n\Z)^{\times}\right\}.
        \end{align*}
        We take $G(n)$ to be written with respect to the basis $\{P, Q\}$ of $E[n]$. Now, let $B = U(n) \cap G(n).$ Observe that the group $B$ is precisely the subgroup of matrices in $G(n)$ which stabilize $\langle P\rangle$.


        Note that we do not necessarily have that $B$ is a subgroup with surjective determinant, so we let $S \coloneqq \det B \leq (\Z/n\Z)^{\times}.$
        Since $B \leq G(n)$ is the subgroup stabilizing $\langle P \rangle$, we have
        \begin{align*}
            S = \{\det(g) \in (\Z/n\Z)^{\times} : g \in G(n), g(\langle P \rangle) = \langle P \rangle\}.
        \end{align*}
        We identify $S$ with a subgroup of $\Gal(\Q(\zeta_n)/\Q)\cong(\Z/n\Z)^{\times}$. Let $K$ denote the fixed field of $S$, and define
        \begin{align*}
            d_2 &\coloneqq [K : \Q] \\
            &= \frac{\#(\Z/n\Z)^{\times}}{\#S}.
        \end{align*}
        In particular, since $K$ is the fixed field of the Galois elements which stabilize $\langle P \rangle$, the field $K$ must be contained in every field $L$ in which $\langle P \rangle$ is stabilized by $\Gal(L/\Q).$ In particular, the subgroup $\langle P \rangle$ is stabilized by $\Gal(\Q(x)/\Q)$, so $K \subseteq \Q(x).$ 

        Next, let $y$ be a $K$-rational point on $X_{G(N), K}$ which corresponds to $E/K$, and with respect to the same basis $\{P, Q\}$ of $E[n].$

        Let $\pi : \GL_2(\Z) \to \GL_2(\Z/n\Z)$ be given by reduction modulo $n$. Next we define $\Gamma(B)$ and $\Gamma(G(n))$ to be given by
        \begin{align*}
            \Gamma(B) &\coloneqq \pi^{-1}(B) \cap \SL_2(\Z),\\
            \Gamma(G(n)) &\coloneqq \pi^{-1}(G(n)) \cap \SL_2(\Z).
        \end{align*}
        Then we define $X_B'$ to be the modular curve $H^*/\Gamma(B)$, which we view as a geometrically integral curve over $K$.
        
        We know by \cite[Section 7.3]{gamma0degrees} that there exists a $K$-rational morphism $f: X_B' \to X_{G(n), K}$ of degree $d_1 = [\pm \Gamma(G(n)): \pm \Gamma(B)]$. For the purpose of this proof, we can simplify this index further to $[\pm \Gamma(G(n)): \pm B] = [\Gamma(G(n)) : B]$, since $P, -P \in \langle P \rangle$. So we proceed to define $d_1$ by the index
        \begin{align*}
            d_1 = \deg (f) = [\Gamma(G(n)) : \Gamma(B)].
        \end{align*}
        Now by Lemma $\ref{lemma35}$, the morphism $f$ induces a non-constant morphism $f^* : X_{G(n), K} \to X_B'^{(d_1)}$, which is a pullback.
        \begin{claim}\label{38claim2}
            Given the rational point $y \in X_{G(n), K}$, the pullback divisor $f^*(y)$ on $X_B' $ is also $\mathbb{P}^1_K$-parameterized.
        
        \begin{proof}[Proof of Claim \ref{38claim2}]
            
        Recall the mod $n$ image $G(n)$ is assumed to be genus $0$, so $X_{G(n), K} \simeq \mathbb{P}^1_K$. Now observe that the pullback $f^*(y) \in X_B'^{(d_1)}$ is itself the image of a point under the non-constant morphism $f^*: \mathbb{P}^1_K \to X_B'^{(d_1)}$. Thus, we obtain from Lemma \ref{lemma36} that the irreducible divisor represented by $f^*(y)$ in $X_B'$ is $\mathbb{P}^1_K$-parameterized.
        \end{proof}
        \end{claim}

        We will now define a map from $X_B'$ to $X_0(n)_K$, and show that the degree of the point $x$ on $X_0(n)_K$ is equal to the degree of its preimage in $X_B'$ via this map.

        Define $g: X_B' \to X_0(n)_K$ corresponding to the inclusion $B \leq U(n)$. Let $x_0 \in X_0(n)_K$ be the closed point such that the map $X_0(n)_K \to X_0(n)_{\Q}$ maps $x_0 \mapsto x$. Let $x' \coloneqq f^*(y) \in X_B'$ be the point on $X_B'$ such that $g(x') = x_0$ and $f(x') = y$. The maps we have are given as follows:
\[\begin{tikzcd}
	& {X_B'} &&& {X_B'^{(d_1)}} \\
	{X_0(n)_K} && {X_{G(n), K} \simeq \mathbb{P}^1_K} & {X_0(n)_K^{(d_1)}} && {X_{G(n), K} \simeq \mathbb{P}^1_K}
	\arrow["g"{description}, from=1-2, to=2-1]
	\arrow["f"{description}, from=1-2, to=2-3]
	\arrow["g^{(d_1)}"{description}, from=1-5, to=2-4]
	\arrow["{f^*}"{description}, from=2-6, to=1-5]
\end{tikzcd}\]

        Recall that $x_0 = g(x')$, so $\deg(x_0) = \deg(g(x'))$. So since we know that $x_0$ is the image via $g$ of a $\mathbb{P}^1_K$-parameterized point $x'$, then to show $x_0\in X_0(n)_K$ is $\mathbb{P}^1_K$-parameterized, it suffices by Lemma \ref{lemma37} to prove that the degree condition $\deg(x') = \deg (g(x'))$ holds.
        
        \begin{claim} \label{38claim3}
        As closed points over $K$, we have the equality
        \begin{align}\label{eqnthm38}
        \deg(x') = \deg (g(x')).
        \end{align}

        \begin{proof}[Proof of Claim \ref{38claim3}]
        We argue that each side of Equation \ref{eqnthm38} is precisely equal to $d_1$.
      
        First, consider the right-hand side of Equation \ref{eqnthm38}, $\deg(g(x'))$. By \cite[Section 7.3]{gamma0degrees},
        \begin{align*}
            d_1 &\coloneqq [\Gamma(G(n)) : \Gamma(B)]
            \\ &= [G(n)_K : B].
        \end{align*}
        We show that the degree of the point $g(x')$ is equal to $d_1$, by the Orbit-Stabilizer Theorem. Under the action of the mod $n$ image of $E/K$ which is denoted $G(n)_K$, the size of the orbit of a point $x_0 \in X_0(n)_K$, denoted $|\orb_{G(n)_K}(x_0)|$, is by definition equal to $\deg(x_0)$. So recalling the stabilizer $\stab_{G(n)_K}(x_0) = B$, we have
        \begin{align*}
            \deg(x_0) &= \frac{|G(n)_K|}{|B|} = d_1,
        \end{align*}
        by definition. And $\deg(g(x')) = \deg(x_0)$, so we must also have $\deg(g(x')) = d_1$, as desired.

        Next, consider the left-hand side of Equation \ref{eqnthm38}. Recall for general algebraic curves $C, D$, the inequality $\deg(a) \geq \deg(h(a))$ holds for $a \in C$, $h: C \to D$, and $h(a) \in D$. Thus, the degree of $x'$ is at most $d_1$. Now, we show that $\deg(x') \geq d_1$. Since we have already shown $\deg(x_0) = d_1$, we know that $\deg(x') \geq \deg (g) \cdot \deg(x_0).$ It thus follows that $\deg(x') \geq d_1.$
        \end{proof}
        \end{claim}
        By Claim \ref{38claim3}, we have that $\deg (x_0) = \deg(x') = \deg(g(x'))$.
        
        Now, to conclude that $x_0 \in X_0(n)_K$ is $\mathbb{P}^1_K$-parametrized, we apply Lemma \ref{lemma37} to the morphism $g$. We showed in Claim \ref{38claim2} that $f^*(y)$ is $\mathbb{P}^1_K$-parameterized, so since $x'$ is defined to be $f^*(y)$, it itself is also $\mathbb{P}^1_K$-parameterized. Hence, Lemma \ref{lemma37} gives us that since $x'$ is $\mathbb{P}^1_K$-parameterized, the point $g(x') = x_0$ is also $\mathbb{P}^1_K$-parameterized. It now follows from Lemma \ref{descentlemma} that $x\in X_0(n)_{\Q}$ is $\mathbb{P}^1_{\Q}$-parametrized.
\end{proof}
\end{theorem}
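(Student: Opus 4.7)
The plan is to construct a tower of modular curves sitting between $X_0(n)_K$ and the genus-$0$ curve $X_{G(n), K}$ over an appropriate auxiliary field $K$, transfer $\mathbb{P}^1$-parameterization along it, and then descend to $\Q$. Let $x = (E, \langle P\rangle) \in X_0(n)$, extend $P$ to a basis $\{P, Q\}$ of $E[n]$, and define $B = U(n) \cap G(n)$, which is precisely the subgroup of $G(n)$ fixing $\langle P \rangle$. Setting $S = \det(B) \leq (\Z/n\Z)^{\times}$, let $K \subseteq \Q(\zeta_n)$ denote the fixed field of $S$ under the usual identification $\Gal(\Q(\zeta_n)/\Q) \cong (\Z/n\Z)^{\times}$. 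One then checks $K \subseteq \Q(x)$, since $K$ is by construction the smallest field over which the Galois image is forced to stabilize $\langle P \rangle$.

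Next I would form the modular curve $X_B'$ corresponding to the congruence subgroup obtained by lifting $B$, viewed as a geometrically integral curve over $K$, and consider two natural $K$-rational morphisms out of it: $f : X_B' \to X_{G(n), K}$ of degree $d_1 = [G(n) : B]$ (from the inclusion $B \leq G(n)$), and $g : X_B' \to X_0(n)_K$ (from the inclusion $B \leq U(n)$). Because $G(n)$ has genus $0$ and $X_{G(n), K}$ has a $K$-rational point $y$ corresponding to $E$ over $K$, we have $X_{G(n), K} \cong \mathbb{P}^1_K$. By Lemma \ref{lemma35}, $f$ induces a non-constant morphism $f^* : X_{G(n), K} \to X_B'^{(d_1)}$, and since the source is $\mathbb{P}^1_K$, Lemma \ref{lemma36} shows that the divisor $x' \coloneqq f^*(y)$ is $\mathbb{P}^1_K$-parameterized.

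To transfer this parameterization along $g$, let $x_0 \in X_0(n)_K$ be the point above $x$ under base change, chosen so that $g(x') = x_0$. I would then apply Lemma \ref{lemma37}, which requires the degree identity $\deg(x') = \deg(g(x'))$. By Orbit--Stabilizer applied to the action of $G(n)$ on $\langle P \rangle$, we obtain $\deg(x_0) = |G(n)|/|B| = d_1$. On the other hand, since $y$ is $K$-rational we have $\deg(x') \leq \deg(f) \cdot \deg(y) = d_1$, while the general inequality $\deg(x') \geq \deg(g(x'))$ forces $\deg(x') \geq d_1$; so both degrees equal $d_1$. Lemma \ref{lemma37} then yields that $x_0$ is $\mathbb{P}^1_K$-parameterized, and applying the descent Lemma \ref{descentlemma} to the inclusion $K \subseteq \Q(x)$ finally gives $\mathbb{P}^1_{\Q}$-parameterization of $x$.

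The main obstacle I anticipate is the field-of-definition bookkeeping: verifying that $y$ really is $K$-rational (rather than rational only over a larger field), that $X_B'$ is geometrically integral over $K$ so that degree computations behave as expected, and that the morphisms $f$, $g$, and $f^*$ are all defined over $K$ and compatible with the identifications of $x$, $x'$, and $x_0$. These compatibilities all hinge on the precise characterization of $K$ as the fixed field of $\det(B)$ and on a consistent choice of basis $\{P, Q\}$; once the algebraic setup is in place, the geometric arguments are short.
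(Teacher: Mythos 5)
Your proposal is correct and follows essentially the same approach as the paper: defining $B = U(n) \cap G(n)$, taking $K$ to be the fixed field of $\det(B)$, building $X_B'$ with the two maps $f$ and $g$, pushing $\mathbb{P}^1_K$-parameterization from $X_{G(n),K} \cong \mathbb{P}^1_K$ through $f^*$ and $g$ via Lemmas \ref{lemma35}--\ref{lemma37}, and descending from $K$ to $\Q$ via Lemma \ref{descentlemma}. Your handling of the degree inequality is in fact slightly cleaner: you correctly derive $\deg(x') \leq \deg(f)\cdot\deg(y) = d_1$ and $\deg(x') \geq \deg(g(x')) = d_1$, whereas the paper's Claim \ref{38claim3} states an inequality $\deg(x') \geq \deg(g)\cdot\deg(x_0)$ which is not a general fact and appears to be a slip for the correct $\deg(x') \geq \deg(g(x'))$.
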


Moreover, \textit{any} point on the modular curve whose $j$-invariant is equal to an elliptic curve with an adelic image of genus $0$ is $\mathbb{P}^1$-parameterized, as seen in the following corollary to Theorem \ref{thm38}.
\begin{corollary}
    Suppose $E/\Q$ is an elliptic curve with a genus $0$ adelic image $G$. Then for any $m \in \Z^+$, any closed point $x \in X_0(m)$ such that $j(x) = j(E)$ is $\mathbb{P}^1$-parameterized.
    \begin{proof}
        If the given adelic image $G$ is genus $0$, then the group $G(m)$ is also genus 0 for every $m$. The corollary thus follows immediately from Theorem \ref{thm38}.
    \end{proof}
\end{corollary}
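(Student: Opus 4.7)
The plan is to derive the corollary directly from Theorem \ref{thm38} by showing that whenever the adelic image $G$ has genus $0$, the mod $m$ image $G(m)$ also has genus $0$ for every $m \in \Z^+$. Once this reduction is in place, Theorem \ref{thm38} applied level by level gives exactly the statement of the corollary.

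To verify the reduction, I would use that for every $m \in \Z^+$ the containment $G \subseteq \pi_m^{-1}(G(m))$, where $\pi_m : \GL_2(\widehat{\Z}) \to \GL_2(\Z/m\Z)$ is the natural reduction map, induces a natural surjective $\Q$-morphism $X_G \twoheadrightarrow X_{G(m)}$ of smooth projective curves (this comes from the moduli interpretation of $X_H$ recalled in the preliminaries: any elliptic curve whose adelic image lands in $G$ in particular has mod $m$ image landing in $G(m)$). Since a non-constant morphism between smooth projective curves cannot strictly decrease genus on the image, either by Riemann-Hurwitz or by pullback of regular differentials, the assumption $\genus(X_G) = 0$ forces $\genus(X_{G(m)}) = 0$ for every $m$.

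With that established, given any $m \in \Z^+$ and any closed point $x \in X_0(m)$ with $j(x) = j(E)$, I would invoke Theorem \ref{thm38} at level $m$: the mod $m$ image $G(m)$ is genus $0$, so $x$ is $\mathbb{P}^1$-parameterized. Since $m$ was arbitrary, the corollary follows. I do not anticipate a genuine obstacle here: Theorem \ref{thm38} carries all the technical content, and the only new ingredient is the genus-monotonicity statement under surjective morphisms, which is standard.
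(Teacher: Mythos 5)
Your proof is correct and follows essentially the same path as the paper's: reduce to showing $G(m)$ has genus $0$ for every $m$ and then apply Theorem \ref{thm38}. The only difference is that you spell out the genus-monotonicity step (via the natural surjection $X_G \twoheadrightarrow X_{G(m)}$ and Riemann–Hurwitz or pullback of differentials), which the paper states without justification.
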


\section{Overview of main algorithm}\label{section:overview}
In this section, we give an overview of the algorithm NonIsolated, which is the main procedure for determining whether a non-CM $j$-invariant is isolated. The degree formulas used throughout, and the sub-algorithm needed to complete the set of (level, degree) pairs in Steps \ref{open for} through Steps \ref{close for}, are expanded upon in Sections \ref{section:degreeofmap} and \ref{section:primitive} respectively. 

\begin{breakablealgorithm}\label{algorithm:main}
    \caption{NonIsolated}
    \begin{algorithmic}[1]
        \Require{A non-CM $j$-invariant $j \in \Q$.}
        \Ensure{A finite list \verb|{* <a_1, d_1>, ..., <a_k, d_k> *}| of $($level, degree$)$ pairs such that $j$ is isolated if and only if there exists an isolated point $x \in X_0(a_i)$ of degree $d_i$ with $j(x) = j$ for some pair \verb|<a_i, d_i>| in the list.}
        \State Construct an elliptic curve $E/\Q$ with $j(E) = j.$  
        \State Use Zywina's algorithm \cite{zywina} to compute the adelic image $G$ of $E/\Q$ as a subgroup of $\GL_2(\widehat{\Z})$.
        \State\label{step:reducedlevel}Let $m$ be the product of 2, 3, and all primes $p$ for which the mod $p$ Galois representation of $E$ is not surjective. Find the level $m_0$ of the $m$-adic Galois representation associated to $E$, using \cite[Algorithm 2]{bourdon2024classification}.
        \For{\label{open for}each divisor $a$ of $m_0$}
        \For{each closed point $x$ on $X_0(a)$ corresponding to $E$}
        \State\label{step:primitive}Apply Algorithm \ref{algorithm:primitive} to obtain the primitive point of $x$; i.e., obtain the image of $x$ on $X_0(d_a)$, where $d_a$ is the smallest divisor of $a$ such that, for $f : X_0(a) \to X_0(d_a)$, the condition $\deg(x) = \deg(f) \cdot \deg(f(x))$ holds. The point $f(x)$ will be recorded as a pair \verb|<d_a, d>|, which corresponds to the degree $d$ point $f(x) \in X_0(d_a)$.
        \EndFor
        \EndFor\label{close for}
        \State\label{step:riemannroch}Filter the sequence of $($level, degree$)$ pairs from Step \ref{step:primitive} by only returning $($level, degree$)$ pairs for which $d \leq \genus(X_0(d_a))$.
        \State\label{step:genus0}Filter the sequence of $($level, degree$)$ pairs from Step \ref{step:riemannroch} by only returning $($level, degree$)$ pairs \verb|<a_i, d_i>| for which the mod $a_i$ Galois representation of $E/\Q$ is not genus 0.
        \State \textbf{Return} the sequence of $($level, degree$)$ pairs obtained from Step \ref{step:genus0}.
        \end{algorithmic}
\end{breakablealgorithm}

Note that Steps \ref{open for} to \ref{close for} will result in a sequence \verb|{<a_1, d_1>, ..., <a_j, d_j>}| consisting of $($smallest level, degree$)$ pairs, for all divisors $a$ of $m_0$. This is precisely the set of primitive points of $E$, denoted $\mathcal{P}(E)$, that we define in Section \ref{section:primitive}.

So suppose for some non-CM $j$-invariant $j$, the algorithm returns a non-empty list of 
\begin{center}
    \verb|{<a_1, d_1>, ..., <a_n, d_n>}|
\end{center} pairs. Then $j$ is the image of an isolated point on some $X_0(n)$ via the map to the $j$-line if and only if for some \verb|<a_i, d_i>| in this set, there exists a degree $d_i$ isolated point $x$ on $X_0(a_i)$ such that $j(x) = j$. Consequently, if Algorithm \ref{algorithm:main} returns an empty list \verb|{}|, then $j$ is non-isolated.

\begin{example}
    Let $j = -25/2$ be the given non-CM $j$-invariant. A priori, the curve $X_0(15)$ is known to have a sporadic rational point with $j$-invariant $-25/2$, so we should detect a degree 1 isolated point on $X_0(15)$ in our computation; see \cite[Table 4]{lozano-robledo}.
    
    We first construct an elliptic curve with $j(E) = -25/2$. We consider the elliptic curve over $\Q$ with LMFDB label 50.a3 which is given by the equation $y^2+xy+y=x^3-x-2$.

    The adelic level of $E$ is 120. Taking the complete preimage via $\GL_2(\widehat{\Z}) \to \GL_2(\Z/120\Z)$ of the subgroup of in $\GL_2(\Z/120\Z)$ generated by following matrices, we have a subgroup of $\GL_2(\widehat{\Z})$ which is precisely the adelic image $\im \rho_E.$
    \begin{center}
        $\begin{pmatrix}46 & 105 \\ 45& 1]\end{pmatrix}, \begin{pmatrix}1 & 42 \\ 90 & 61\end{pmatrix}, \begin{pmatrix}1 & 0 \\ 60 & 1\end{pmatrix}, \begin{pmatrix}81 & 10 \\ 40 & 81\end{pmatrix}, \begin{pmatrix}
            31& 90 \\ 15& 91
        \end{pmatrix}, \begin{pmatrix}
            1 & 72 \\ 0 & 1
        \end{pmatrix},$ \\ $\begin{pmatrix}41 & 80 \\ 40 & 81\end{pmatrix}, \begin{pmatrix}61 & 90 \\ 45 & 91\end{pmatrix}, \begin{pmatrix}
            1& 0 \\ 30 & 1
        \end{pmatrix}, \begin{pmatrix}
            73 & 90 \\ 0 & 49
        \end{pmatrix}, \begin{pmatrix}
            1 & 36 \\ 60 & 1
        \end{pmatrix}.$
    \end{center}
    Next, we compute the reduced level of $E$ — the level of the 120-adic Galois representation associated to $E$ — to also be $m_0= 120$. We next compute the set of primitive points of this elliptic curve, $\mathcal{P}(E),$ by Algorithm \ref{algorithm:primitive} which we describe with more detail in Section \ref{section:primitive}:
    \begin{align*}
        <15, 15>, <15, 5>, <15, 3>, <15, 1>, <5, 5>, <5, 1>, <3, 3>, <3, 1>, <1, 1>.
    \end{align*}
    Applying the filters in Steps \ref{step:riemannroch} and \ref{step:genus0} to the set $\mathcal{P}(E)$, we eliminate points which we know can be parameterized by $\mathbb{P}^1$. Note that $\genus (X_0(15)) = 1,$ $\genus (X_0(5)) = 0,$ $\genus (X_0(3)) = 0,$ and $\genus (X_0(1)) = 0.$ So we rule out all points in $\mathcal{P}(E)$, except for the degree 1 point on $X_0(15)$, from being isolated since they fail to satisfy the inequality in Step \ref{step:riemannroch}. The mod $15$ image has genus 1, so the degree 1 point on $X_0(15)$ is also not filtered by Step \ref{step:genus0}.
    
    Thus, from running Algorithm \ref{algorithm:main} on $j = -25/2$, we obtain the list $<15, 1>$. By Theorem \ref{primitive iso iff}, this example confirms that $j = -25/2$ is isolated if and only if a rational point on $X_0(15)$ is isolated. Since there are finitely many rational points on $X_0(15)$, the $j$-invariant $-25/2$ is isolated.
\end{example}

\section{Primitive points}\label{section:primitive}
In this section, we show that the problem of testing whether a rational, non-CM $j$-invariant is isolated can be reduced to testing whether a finite set of points — primitive points — are isolated.

\subsection{Definitions and preliminaries}\label{sec7 prelim}

Throughout, we take $m$ to be the product of 2, 3, and all primes $\ell$ for which $\rho_{E, \ell}$ is non-surjective, and $m \in \Z^+$ by Serre's Open Image Theorem \cite{serre}. For such $m$, we define the reduced level of $E$.
\begin{definition}
     Let $m \in \Z^+$ be the product of 2, 3, and all primes $\ell$ for which the mod $\ell$ Galois representation of $E$ is not surjective. The \textbf{reduced level of $E$} is the level of the $m$-adic Galois representation associated to $E.$
\end{definition}

The following theorem of Menendez is an analogue of \cite[Theorem 4.1]{belov} for the modular curve $X_0(n),$ stated for the reduced level $m_0$ of $E$. See a general statement and proof in Appendix \ref{appendix:Menendez}. Recall that $\Supp(m_0)$ denotes the set of prime divisors of $m_0.$
\begin{theorem}\cite[Theorem 5.1]{Zonia}\label{Zonia}
    Let $j$ be a non-CM $j$-invariant. Let $k \coloneqq \Q(j)$ and fix an elliptic curve $E/k$ with $j(E) = j$. Let $S_E$ denote the set
    \begin{align*}
    S_E \coloneqq \{2, 3\} \cup \{ \ell : \rho_{E, \ell^{\infty}} (\Gal(\overline{\Q}/\Q)) \neq \GL_2(\Z_{\ell})\}.
    \end{align*}
    Define
    \begin{align*}
        \mathfrak{m}_S \coloneqq \prod_{\ell \in S_E}\ell.
    \end{align*}
    Let $m_0 \in \Z^+$ denote the reduced level of $E$. If $x \in X_0(n)$ is a closed point with $j(x) = j(E)$, then $\deg(x) = \deg(f) \cdot \deg(f(x))$, where $f$ denotes the natural map $X_0(n) \to X_0(\gcd(n, m_0))$.
\end{theorem}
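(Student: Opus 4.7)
The plan is to reformulate the desired degree equality as a statement about Galois acting transitively on a geometric fiber of $f$, and then to verify this transitivity one prime at a time using the hypothesis that the $m$-adic representation has level $m_0$. Fix a geometric lift $\tilde{x} = (E, \langle P \rangle)$ of $x$, and set $N := \gcd(n, m_0)$. An orbit-stabilizer argument shows that $\deg(x) = \deg(f)\cdot \deg(f(x))$ is equivalent to $\stab_{\Gal(\overline{k}/k)}(f(\tilde{x}))$ acting transitively on the geometric fiber $f^{-1}(f(\tilde{x}))$. Decompose $n = n_1 n_2$ coprimely with $\Supp(n_1) \subseteq S_E$ and $\Supp(n_2) \cap S_E = \emptyset$, and split $\langle P\rangle = \langle P_1\rangle \oplus \langle P_2\rangle$ accordingly, noting that $N = \gcd(n_1, m_0)$. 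By the Chinese Remainder Theorem the fiber decomposes as a product over primes $\ell \mid n$: for $\ell \mid m_0$ the $\ell$-component consists of order-$\ell^{v_\ell(n)}$ cyclic subgroups of $E[\ell^{v_\ell(n)}]$ lying above a fixed order-$\ell^{\min(v_\ell(n), v_\ell(m_0))}$ subgroup, while for $\ell \nmid m_0$ it consists of all order-$\ell^{v_\ell(n)}$ cyclic subgroups.

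Next, handle the transitivity one prime at a time. For $\ell \in S_E$, set $a_\ell = v_\ell(n)$ and $b_\ell = v_\ell(m_0)$; the $m$-adic level hypothesis yields that $\im \rho_{E,\ell^\infty}$ contains the kernel of the reduction map $\GL_2(\Z_\ell) \to \GL_2(\Z/\ell^{b_\ell}\Z)$. A direct matrix computation in a basis adapted to the fixed order-$\ell^{b_\ell}$ subgroup shows that this kernel already acts transitively on the order-$\ell^{a_\ell}$ cyclic subgroups of $E[\ell^{a_\ell}]$ lying above a fixed order-$\ell^{\min(a_\ell, b_\ell)}$ subgroup. For $\ell \notin S_E$, the full $\ell$-adic image is $\GL_2(\Z_\ell)$, which acts transitively on cyclic subgroups of order $\ell^{a_\ell}$. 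In either case we obtain local transitivity of the relevant stabilizer on the $\ell$-component of the fiber.

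Finally, assemble these local transitivities into a single element of $\stab_{\Gal(\overline{k}/k)}(f(\tilde{x}))$ realizing any prescribed target on the product fiber. The main obstacle is ruling out entanglement between the various $\ell$-adic factors of the adelic image; this is exactly why $m$ is defined to include $2$, $3$, and every non-surjective prime, so that the nontrivial entanglement of $\im\rho_E$ is absorbed into the $m$-adic image. Using Serre's open image theorem together with a structural analysis of possible entanglements for non-CM $E/k$ (carried out in Appendix \ref{appendix:Menendez} following Menendez), one shows that $\im \rho_E$ is the fibered product over $\GL_2(\Z/m_0\Z)$ of its $m$-adic image with $\prod_{\ell \notin S_E} \GL_2(\Z_\ell)$. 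Applied to the stabilizer of $\langle (n/N)P\rangle$, this decomposition upgrades the local transitivities to joint transitivity on the fiber, yielding the desired degree equality.
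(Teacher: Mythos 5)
Your reformulation of the degree equality as transitivity of $\Gal(\overline{\Q}/\Q(f(x)))$ on the geometric fiber is correct, and the local analysis at each individual prime is fine: for $\ell\in S_E$ the $m$-adic level hypothesis does give $\ker\bigl(\GL_2(\Z_\ell)\to\GL_2(\Z/\ell^{b_\ell}\Z)\bigr)\subseteq\im\rho_{E,\ell^\infty}$, and for $\ell\notin S_E$ the $\ell$-adic image is all of $\GL_2(\Z_\ell)$. The gap is in the assembly. You assert that $\im\rho_E$ is ``the fibered product over $\GL_2(\Z/m_0\Z)$ of the $m$-adic image with $\prod_{\ell\notin S_E}\GL_2(\Z_\ell)$,'' citing Appendix~\ref{appendix:Menendez}. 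Since $\Supp(m_0)\subset S_E$, the factor $\prod_{\ell\notin S_E}\GL_2(\Z_\ell)$ admits no nontrivial map to $\GL_2(\Z/m_0\Z)$, so the only sensible reading is the direct product $\im\rho_E=\im\rho_{E,m^\infty}\times\prod_{\ell\notin S_E}\GL_2(\Z_\ell)$. That is false in general: surjective primes can be entangled with the $m$-part through the cyclotomic/determinant quotient. For example $\Q(\sqrt{\Delta_E})\subseteq\Q(E[2])$ while $\Q(\sqrt{\pm\ell})\subseteq\Q(\zeta_\ell)\subseteq\Q(E[\ell])$, so if $\Delta_E$ equals $\pm\ell$ up to squares then the $2$-torsion and $\ell$-torsion fields share a quadratic subfield even when $\rho_{E,\ell^\infty}$ is surjective. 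Appendix~\ref{appendix:Menendez} proves no such splitting, so the attribution is also off.

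The paper avoids any global splitting by stripping off the surjective primes one at a time. Proposition~\ref{PropSurjectivePrimes} treats each $\ell^s\parallel n$ with $\ell\notin S_E$: first, the argument of \cite[Proposition 5.7]{belov} shows $[k(P):k(\ell^sP)]=(\ell^2-1)\ell^{2s-2}$ is maximal — this is exactly where the abelian entanglement above is shown to be harmless, via the structure of $\SL_2(\Z_\ell)$ for $\ell\geq 5$; then a short chain of inequalities comparing $[k(P):k(\ell^sP)(\langle P\rangle)]\cdot[k(\ell^sP)(\langle P\rangle):k(\ell^sP)]$ against $\varphi(\ell^s)\cdot\ell^{s-1}(\ell+1)$ forces $[k(\langle P\rangle):k(\langle\ell^sP\rangle)]=\deg\bigl(X_0(a\ell^s)\to X_0(a)\bigr)$. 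Once all such $\ell$ are removed we have $\Supp(n)\subset S_E$, where the $m$-adic level hypothesis gives $[k(E[n]):k(E[d])]=\#\ker\bigl(\GL_2(\Z/n\Z)\to\GL_2(\Z/d\Z)\bigr)$ outright; a parallel count against $\#\ker(B(n)\to B(d))$ for the upper-triangular Borels finishes. To repair your version, you would need to replace the direct-product claim with the genuinely available fact that the $\ell$-adic image over any finite subextension of the $m$-power torsion field still contains $\SL_2(\Z_\ell)$ for $\ell\notin S_E$; that is the content of the cited lemma from \cite{belov}, not of any product decomposition of $\im\rho_E$.
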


     
     Consequently, if there exists an isolated point $x \in X_0(n)$, for any $n \in \Z^+$, then $f(x) \in X_0(\gcd(n, m_0))$ is isolated by Theorem \ref{belovdeg}, where $f: X_0(n) \to X_0(\gcd(n, m_0))$ denotes the natural map.

\subsection{Primitive points}\label{subsec:primitivepts}

Let $E$ be an elliptic curve over a number field $k$, such that $j = j(E)$ is a non-CM $j$-invariant. By Section \ref{sec7 prelim}, each isolated point $x \in X_0(n)$ with $j(x) = j$ must map to an isolated point on some $X_0(a)$, where $a$ is a divisor of the reduced level $m_0$ of $E$. We use this fact to characterize the finite set of $($level, degree$)$ pairs that will be sufficient to test for isolated points. First, we define the primitive point of $x \in X_0(n)$.

\begin{definition}\label{defn:myprimitivepoints}
    Let $E$ be an elliptic curve over a number field $k$, such that $j = j(E)$ is a non-CM $j$-invariant. Let $a$ be some divisor of the reduced level $m_0$, and let $x\in X_0(a)$ be a closed point. Suppose $d_a$ is the smallest divisor of $a$ (which could be equal to $a$ itself) such that, if $f: X_0(a) \to X_0(d_a)$ is the natural map, then the degree condition
    \begin{align*}
        \deg(x) = \deg(f) \cdot \deg(f(x))
    \end{align*}
    holds. We call the point $f(x) \in X_0(d_a)$ the \textbf{primitive point of $x$}. Denote the primitive point of $x \in X_0(a)$ as $\mathcal{P}(x)$.
\end{definition}

\begin{theorem}\label{primitive iso iff}
    Let $E/\Q$ be a non-CM elliptic curve, and let $m_0 \in \Z^+$ be the reduced level of $E$. Define $\mathcal{P}(E)$ to be given by the set of all $\mathcal{P}(x)$ for $x \in X_0(a)$ with $a \mid m_0$. Then the rational number $j(E)$ is isolated if and only if there exists an isolated point in the set $\mathcal{P}(E)$.
\end{theorem}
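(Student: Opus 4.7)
The plan is to prove the two implications separately, with the backward direction being essentially a restatement of definitions and the forward direction requiring the machinery of Theorem \ref{Zonia} (Menendez) and Theorem \ref{belovdeg} applied in succession.

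For the backward direction, suppose $y \in \mathcal{P}(E)$ is isolated. By construction, $y = \mathcal{P}(x)$ for some closed point $x \in X_0(a)$ with $a \mid m_0$ and $j(x) = j(E)$. Since $\mathcal{P}$ is defined via the natural projection $X_0(a) \to X_0(d_a)$, we have $j(y) = j(x) = j(E)$. By the definition of an isolated $j$-invariant, $j(E)$ is isolated.

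For the forward direction, assume $j(E)$ is isolated, so there exists some $n \in \Z^+$ and an isolated closed point $x \in X_0(n)$ with $j(x) = j(E)$. I would first apply Theorem \ref{Zonia} to the natural map $f_1: X_0(n) \to X_0(\gcd(n, m_0))$: since $j(x)$ is non-CM, the theorem guarantees the degree condition $\deg(x) = \deg(f_1) \cdot \deg(f_1(x))$. Theorem \ref{belovdeg} then implies that $f_1(x)$ is an isolated closed point on $X_0(a)$, where $a \coloneqq \gcd(n, m_0)$ divides $m_0$.

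Next, I would form the primitive point $\mathcal{P}(f_1(x))$: by Definition \ref{defn:myprimitivepoints}, let $d_a \mid a$ be the smallest divisor for which the natural map $f_2: X_0(a) \to X_0(d_a)$ satisfies $\deg(f_1(x)) = \deg(f_2) \cdot \deg(f_2(f_1(x)))$. Such a $d_a$ exists since $a$ itself always works (taking $f_2$ to be the identity). Applying Theorem \ref{belovdeg} a second time to $f_2$, the point $\mathcal{P}(f_1(x)) = f_2(f_1(x)) \in X_0(d_a)$ is isolated. Since $f_1(x) \in X_0(a)$ with $a \mid m_0$, this primitive point lies in $\mathcal{P}(E)$, producing the desired isolated member of $\mathcal{P}(E)$.

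The main obstacle, though modest, lies in confirming that the two successive applications of Theorem \ref{belovdeg} legitimately preserve isolatedness in the required sense; this reduces to verifying that each degree condition is met, which is precisely what Theorem \ref{Zonia} and the minimality in the definition of $d_a$ supply. No further subtlety is needed because $a = \gcd(n, m_0)$ automatically divides $m_0$, placing $f_1(x)$ in the correct range for the primitive-point construction.
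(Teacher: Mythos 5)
Your proof is correct and follows essentially the same route as the paper: the backward direction by the definition of an isolated $j$-invariant, and the forward direction by first applying Theorem \ref{Zonia} plus Theorem \ref{belovdeg} to pass to $X_0(\gcd(n,m_0))$, then applying the minimality in Definition \ref{defn:myprimitivepoints} together with Theorem \ref{belovdeg} once more to land on the primitive point. The only difference is presentational: you spell out explicitly that $d_a$ always exists and that the backward implication relies on $j(y)=j(E)$, both of which the paper leaves implicit.
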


\begin{proof}[Proof of Theorem \ref{primitive iso iff}]
    Let $E/\Q$ be a non-CM elliptic curve. If there exists an isolated point in $\mathcal{P}(E)$, then by definition $j(E)$ is isolated.

    Conversely, suppose $j(E)$ is isolated. Then there exists an isolated point $x \in X_0(n)$ with $j(x) = j(E)$, for some $n \in \Z^+$. So by Theorem \ref{Zonia}, we know that $\deg(x) = \deg(f) \cdot \deg(f(x))$ holds, where $f: X_0(n) \to X_0(\gcd(n, m_0))$ is the natural map. By Theorem \ref{belovdeg}, the point $f(x) \in X_0(\gcd(n, m_0))$ is isolated.

    Now, let $a \coloneqq \gcd(n, m_0)$, and take $d_a$ to be defined as in Definition \ref{defn:myprimitivepoints}, where $d_a$ is the smallest divisor of $\gcd(a)$ such that the degree condition holds; i.e., if $g : X_0(a) \to X_0(d_a)$ is the natural map, then the point $g(f(x)) \in X_0(d_a)$ is isolated by Theorem \ref{belovdeg}.
    
    The point $g(f(x)) \in X_0(d_a)$ is, by construction, in the set of primitive points $\mathcal{P}(E)$, and is isolated, as desired.
\end{proof}

\subsection{Connecting Definition \ref{defn:myprimitivepoints} to existing literature}\label{subsec:primitiveptsconnections}
In this subsection, we give another construction for the set $\mathcal{P}(E)$, analogous to the definition given in \cite[Section 5]{bourdon2024classification} with respect to the curve $X_1(n)$. In Section \ref{subsubsection:prelim}, we provide preliminary results, before proving in Section \ref{subsubsection:equivalence} that our definition of primitive points, Definition \ref{defn:myprimitivepoints}, is equivalent to the analogous definition in \cite{bourdon2024classification}.

Let $E/\Q$ be a non-CM elliptic curve, and let $m \geq 1$. We define the directed graph $G(E, m)$ as follows. First consider the set of closed points on $X_0(n)$, for all $n \mid m$, which correspond to the given elliptic curve $E$. We take \textbf{vertices} of $G(E, m)$ to be tuples $(x, n, d)$ such that:
\begin{itemize}
    \item[(i)] $n \mid m$,
    \item[(ii)] $x$ is a closed point on $X_0(n)$ of degree $d$, and
    \item[(iii)] $j(x) = j(E)$.
\end{itemize}
We connect the vertices $(x, n, d)$, $(x', n', d')$ with a \textbf{directed edge} from $(x, n, d)$ to $(x', n', d')$, if $f: X_0(n) \to X_0(n')$ denotes the natural map and the following conditions are true:
\begin{itemize}
    \item[(i)] $n' \mid n$ and $n' \neq n$,
    \item[(ii)] $x' = f(x)$,
    \item[(iii)] $d = d' \deg(f).$
\end{itemize}
The graph $G(E, m)$ is a directed acyclic graph.

\begin{proposition}
    Let $E/\Q$ be a non-CM elliptic curve. The graph $G(E, m)$ is transitive. That is, if $n_2| n_1|n$ are proper divisors and $(x, n, d)$, $(x_1, n_1, d_1)$, and $(x_2, n_2, d_2)$ are vertices of $G(E, m)$, then there is a directed edge from $(x, n, d)$ to $(x_2, n_2, d_2)$ if and only if there are directed edges from $(x, n, d)$ to $(x_1, n_1, d_1)$ and $(x_1, n_1, d_1)$ to $(x_2, n_2, d_2)$.
    \begin{proof}
    Let $f_1 : X_0(n) \to X_0(n_1)$ and $f_2 : X_0(n_1) \to X_0(n_2)$ denote the natural projection maps.
    
    First, suppose that there is an edge from $(x, n, d)$ to $(x_2, n_2, d_2)$. We want to show that $d = d_1 \cdot \deg(f_1)$. Suppose toward contradiction that $d < d_1 \cdot  \deg(f_1).$ Since $d_1 \leq d_2 \cdot \deg(f_2)$, we know that
    \begin{align*}
        d < d_2 \deg(f_1) \cdot \deg(f_2). 
    \end{align*}
    We have contradicted the assumption that $d = d_2 \cdot \deg(f_2)$.

    Conversely, suppose that there are directed edges from $(x, n, d)$ to $(x_1, n_1, d_1)$ and $(x_1, n_1, d_1)$ to $(x_2, n_2, d_2)$. We thus know that $d = d_1 \deg(f_1)$ and $d_1 = d_2 \deg(f_2)$, so
    \begin{align*}
        d &= d_2\deg(f_2)\cdot\deg(f_1).
    \end{align*}
    Thus, there is a directed edge from $(x, n, d)$ to $(x_2, n_2, d_2)$.
    \end{proof}
\end{proposition}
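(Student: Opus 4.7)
The plan is to reduce everything to the familiar inequality that for any finite morphism $\phi : X \to Y$ of smooth projective curves over $k$ and any closed point $y \in X$, one has $\deg(y) \leq \deg(\phi) \cdot \deg(\phi(y))$. This follows because the scheme-theoretic fiber of $\phi$ over $\phi(y)$ is an effective divisor on $X$ of degree $\deg(\phi) \cdot \deg(\phi(y))$ (over $k$) which contains $y$ with ramification multiplicity at least $1$. Setting $f_1 : X_0(n) \to X_0(n_1)$ and $f_2 : X_0(n_1) \to X_0(n_2)$ to be the natural projections, the composition $f_2 \circ f_1$ is exactly the natural projection $X_0(n) \to X_0(n_2)$, and degrees of finite maps are multiplicative, so $\deg(f_2 \circ f_1) = \deg(f_1)\deg(f_2)$.

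For the converse direction, I would just compose: assuming both intermediate edges, the image conditions $x_1 = f_1(x)$ and $x_2 = f_2(x_1)$ give $x_2 = (f_2 \circ f_1)(x)$, while the degree equalities $d = d_1 \deg(f_1)$ and $d_1 = d_2 \deg(f_2)$ multiply to $d = d_2 \deg(f_1)\deg(f_2) = d_2 \deg(f_2 \circ f_1)$. For the forward direction, assume the edge $(x,n,d) \to (x_2,n_2,d_2)$ exists, so $x_2 = (f_2 \circ f_1)(x)$ and $d = d_2 \deg(f_1)\deg(f_2)$. The middle vertex $(x_1, n_1, d_1)$ through which we wish to factor is necessarily the one with $x_1 = f_1(x)$ and $d_1 = \deg(x_1)$, since otherwise no edge from $(x,n,d)$ to $(x_1,n_1,d_1)$ exists at all. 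Apply the key inequality twice to obtain
\[
d \;=\; \deg(x) \;\leq\; \deg(f_1)\cdot \deg(f_1(x)) \;=\; \deg(f_1)\cdot d_1 \;\leq\; \deg(f_1)\deg(f_2)\cdot d_2 \;=\; d,
\]
which forces equality at every step. Equality in the first inequality is the degree condition for the edge $(x,n,d) \to (x_1,n_1,d_1)$, and equality in the second is the condition for $(x_1,n_1,d_1) \to (x_2,n_2,d_2)$.

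The only real subtlety — what one might mistake for a difficulty — is the graph-theoretic bookkeeping around the intermediate vertex: the statement only makes content-ful sense when $x_1 = f_1(x)$, for otherwise the edge from $(x,n,d)$ to $(x_1,n_1,d_1)$ cannot exist regardless of degrees. Once that is acknowledged, the substance of the proof is the squeeze argument above, and no deeper input from the theory of modular curves is required beyond the fact that the projections $X_0(n) \to X_0(n')$ compose naturally.
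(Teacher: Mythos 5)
Your proof is correct and takes essentially the same approach as the paper: both hinge on the basic inequality $\deg(y) \leq \deg(\phi)\deg(\phi(y))$ together with multiplicativity of degrees under composition of the natural projections. Your squeeze presentation, which forces equality at both steps simultaneously, is slightly cleaner than the paper's proof-by-contradiction (which explicitly establishes only $d = d_1\deg(f_1)$ and leaves $d_1 = d_2\deg(f_2)$ implicit), and your remark that the middle vertex must satisfy $x_1 = f_1(x)$ is a helpful clarification the paper leaves tacit, but the substance is the same.
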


\begin{remark}
    Recall that for a fixed vertex $(x, n, d) \in G(E, m)$, its \textbf{descendants} are defined to be all vertices $(x', n', d')$ which can be reached via a path from $(x, n, d)$ to $(x', n', d')$. The vertex $(x, n, d)$ and its descendants induce a subgraph of $G(E, m)$ which is directed, acyclic, and has a single source (namely, $(x, n, d)$).
\end{remark}

\begin{definition}\label{defn:algorithmsprimitivepoints}
    Define the set $S$ to be the union of sinks of the graphs $G(E, m)$, for all $m \in \Z^+.$ The set $S$ is, in particular, analogous to the set of primitive points given in \cite[Section 5]{bourdon2024classification}. 
\end{definition}

\subsubsection{Preliminary results}\label{subsubsection:prelim}

We establish the following theorem, which is concerning the residue field of points on the modular curve $X_0(n)$, which we use in the proof of Theorem \ref{thm:degreegcd}.

\begin{theorem}\label{theorem:residuefieldsequal}
    Let $n_1, n_2 \in \Z^+$ and $n = \lcm(n_1, n_2)$, and let $g = \gcd(n_1, n_2)$. Suppose $x = (E, \langle P \rangle) \in X_0(n)$ for an elliptic curve $E$ with $j(E) \not\in \{0, 1728\}$. Next, define $x_1 = (E, \langle \frac{n_2}{g} P \rangle) \in X_0(n_1)$ and $x_2 = (E, \langle \frac{n_1}{g}P\rangle) \in X_0(n_2)$. Then the residue field $\Q(x)$ is equal to the composite field $\Q(x_1) \Q(x_2).$
    \begin{proof}
        First, we show that $\Q(x) \subseteq \Q(x_1)\Q(x_2)$. Note that by the Fundamental Theorem of Galois Theory, it suffices to show an element $\sigma$ that is both in $\Gal(\overline{\Q}/\Q(\langle \frac{n_2}{g}P\rangle))$ and 
        $\Gal(\overline{\Q}/\Q(\langle \frac{n_2}{g}P\rangle))$ will fix $\langle P \rangle$; that is, we have $\rho_{E, \frac{n_2}{g}}$ and $ \rho_{E, \frac{n_1}{g}}$ are upper triangular. In particular, since $\rho_{E, \frac{n_1}{g}}$ is upper triangular, the image of $\sigma$ is also upper triangular mod $n_1.$
        
        Now, note that lcm$(n_1, n_2) = \frac{n_2}{g}n_1$, so it follows that since $\sigma$ is upper triangular mod $n_1$ and mod $\frac{n_2}{g}$, it is also upper triangular mod lcm$(n_1, n_2).$ Thus $\langle P \rangle$ is fixed by the action of $\Gal(\overline{\Q}/\Q(x_1)\Q(x_2))$, and so $x$ is defined over the composite field $\Q(x_1)\Q(x_2)$.

        Since we also have that $\Q(x_i) \subseteq \Q(x)$ for each $i \in \{1, 2\}$, it follows that the compositum $\Q(x_1)\Q(x_2)$ is also contained in $\Q(x)$.
    \end{proof}
\end{theorem}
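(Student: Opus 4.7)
The plan is to establish the two inclusions $\Q(x_1)\Q(x_2)\subseteq \Q(x)$ and $\Q(x)\subseteq \Q(x_1)\Q(x_2)$ separately via the Galois-theoretic description of the residue field of a noncuspidal point on $X_0(m)$: once a model of $E$ over $\Q$ is fixed, the residue field $\Q(y)$ of $y=(E,\langle R\rangle)\in X_0(m)$ is the fixed field of the setwise stabilizer of $\langle R\rangle$ inside $\Gal(\overline{\Q}/\Q)$, where the action is via $\rho_{E,m}$.

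The easier inclusion $\Q(x_1)\Q(x_2)\subseteq \Q(x)$ follows from the observation that both $\langle \tfrac{n_2}{g}P\rangle$ and $\langle \tfrac{n_1}{g}P\rangle$ are subgroups of $\langle P\rangle$. Any $\sigma\in\Gal(\overline{\Q}/\Q(x))$ fixes $\langle P\rangle$ setwise, so it fixes these two subgroups as well, giving $\Gal(\overline{\Q}/\Q(x))\subseteq \Gal(\overline{\Q}/\Q(x_i))$ for $i=1,2$, and the Galois correspondence yields the claim.

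For the reverse inclusion, I would translate the stabilizer conditions into a congruence on matrix entries. Fix a basis $\{P,Q\}$ of $E[n]$. Since $n/n_i=n_{3-i}/g$, the scaled pairs $\{\tfrac{n_2}{g}P,\tfrac{n_2}{g}Q\}$ and $\{\tfrac{n_1}{g}P,\tfrac{n_1}{g}Q\}$ are bases of $E[n_1]$ and $E[n_2]$, and the matrix of any $\sigma\in\Gal(\overline{\Q}/\Q)$ on $E[n_i]$ in these bases is literally the entrywise reduction mod $n_i$ of $\rho_{E,n}(\sigma)=\begin{pmatrix}a&b\\c&d\end{pmatrix}$. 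Thus $\sigma$ stabilizing $\langle \tfrac{n_{3-i}}{g}P\rangle$ is equivalent to $c\equiv 0\pmod{n_i}$. If $\sigma$ lies in both stabilizers, then $c\equiv 0\pmod{n_1}$ and $c\equiv 0\pmod{n_2}$, so $c\equiv 0\pmod{\lcm(n_1,n_2)}=n$, which is precisely the condition that $\sigma$ stabilizes $\langle P\rangle$ in $E[n]$. This gives $\Q(x)\subseteq \Q(x_1)\Q(x_2)$.

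The main thing to be careful about is aligning the chosen bases of $E[n_1]$ and $E[n_2]$ with the basis of $E[n]$ so that the relevant matrices reduce entrywise --- under an arbitrary change of basis the ``upper triangular'' shape could shift and the argument would break. The hypothesis $j(E)\notin\{0,1728\}$ enters only to ensure $\Aut(E)=\{\pm 1\}$, so that the stabilizer description of the residue field is accurate with no interference from extra automorphisms. Once this bookkeeping is in place, the heart of the proof is simply the elementary fact that $c\equiv 0\pmod{n_i}$ for both $i=1,2$ forces $c\equiv 0\pmod{\lcm(n_1,n_2)}$.
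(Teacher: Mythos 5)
Your proposal is correct and follows essentially the same route as the paper: the easy inclusion $\Q(x_1)\Q(x_2)\subseteq\Q(x)$ via containment of the cyclic subgroups, and the reverse inclusion via the observation that the lower-left entry $c$ of $\rho_{E,n}(\sigma)$ vanishing mod $n_1$ and mod $n_2$ forces it to vanish mod $\lcm(n_1,n_2)=n$. If anything, your write-up is sharper than the paper's: the paper's proof contains a repeated-index typo in the two Galois groups, and phrases the key step as "upper triangular mod $n_1$ and mod $\tfrac{n_2}{g}$ implies upper triangular mod $\lcm(n_1,n_2)$," which is not literally correct (e.g.\ $n_1=4$, $n_2=8$ gives $\lcm(n_1,\tfrac{n_2}{g})=4\neq 8$), whereas your version -- $c\equiv 0\pmod{n_1}$ and $c\equiv 0\pmod{n_2}$ imply $c\equiv 0\pmod{n}$ -- states the intended congruence fact correctly, and your remark about aligning the scaled bases $\{\tfrac{n_{3-i}}{g}P,\tfrac{n_{3-i}}{g}Q\}$ of $E[n_i]$ with $\{P,Q\}$ makes explicit the bookkeeping that the paper leaves implicit.
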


We show that given a vertex $(x, n, d)$ with an outgoing edge to two distinct vertices $(x_1, n_1, d_1)$ and $(x_2, n_2, d_2)$, there must exist a unique third vertex $(x_3, \gcd(n_1, n_2), d_3)$ such that they both connect to via a directed edge. In Theorem \ref{thm:degreegcd}, we show the existence of such a vertex, and in Theorem \ref{thm:graphuniqueness} we show its uniqueness.

\begin{theorem}\label{thm:degreegcd}
    Let $E/\Q$ be a non-CM elliptic curve. Let $x = (E, \langle P \rangle) \in X_0(n)$ be a closed point. Let $n_1, n_2 $ be divisors of $n$ such that $n = \lcm(n_1, n_2)$, and denote $f_1 : X_0(n) \to X_0(n_1)$ and $f_2 : X_0(n) \to X_0(n_2)$. Suppose the degree conditions
    \begin{align*}
        \deg(x) &= \deg(f_1) \cdot \deg (f_1(x)),\\
        \deg(x) &= \deg(f_2) \cdot \deg (f_2(x))
    \end{align*}
    hold. Then we must have $\deg(x) = \deg(f)\cdot \deg(f(x))$, where $f : X_0(n) \to X_0(\gcd(n_1, n_2))$.
    \begin{proof}
        Note that if $\gcd(n_1, n_2) = n_1$ or $n_2$ in Theorem \ref{thm:degreegcd}, then the theorem immediately holds.
    
        Let $n_1, n_2$ be distinct, proper divisors of $n$. Then by the given degree conditions, we have vertices $(x, n, \deg(x))$, $(x_1, n_1, \deg(x_1))$, and $(x_2, n_2, \deg(x_2))$ in the graph $G(E, m)$, with directed edges from $(x, n, \deg(x))$ to $(x_i, n_i, \deg(x_i))$ for each $i \in \{1, 2\}$. We want to show that there is a directed edge \begin{align*}(x_i, n_i, \deg(x_i)) \to (f(x), \gcd(n_1, n_2), \deg(f(x))),\end{align*} where $i \in \{1, 2\}$. Denote $g_i : X_0(n_i) \to X_0(\gcd(n_1, n_2))$, where $i \in \{1, 2\}.$ We have the following maps.
\[\begin{tikzcd}
	& {X_0(n)} \\
	{X_0(n_1)} && {X_0(n_2)} \\
	& {X_0(\gcd(n_1, n_2))}
	\arrow["{f_1}"', from=1-2, to=2-1]
	\arrow["{f_2}", from=1-2, to=2-3]
	\arrow["f", from=1-2, to=3-2]
	\arrow["{g_1}"', from=2-1, to=3-2]
	\arrow["{g_2}", from=2-3, to=3-2]
\end{tikzcd}\]

        Since $g \coloneqq \gcd(n_1, n_2)$ is a proper divisor of $n_1, n_2$, there exists $k_1, k_2$ such that $n_1 = g \cdot k_1$ and $n_2 = g \cdot k_2.$
            
By Theorem \ref{theorem:residuefieldsequal}, we know that for $x_1 \in X_0(n_1),$ $x_2 \in X_0(n_2),$ we have $\Q(x) = \Q(x_1)\Q(x_2).$ Thus, $\deg(f_1) = [\Q(x_1)\Q(x_2) : \Q(x_1)],$ and $\deg(f_2) = [\Q(x_1)\Q(x_2) : \Q(x_2)].$ Next observe that we now have the following field extensions.
\[\begin{tikzcd}
	& {\mathbb{Q}(x)} \\
	& {\mathbb{Q}(x_1)\mathbb{Q}(x_2)} \\
	{\mathbb{Q}(x_1)} && {\mathbb{Q}(x_2)} \\
	& {\mathbb{Q}(f(x))} \\
	& {\mathbb{Q}}
	\arrow["1", no head, from=1-2, to=2-2]
	\arrow["{\deg(f_1)}"', no head, from=2-2, to=3-1]
	\arrow["{\deg(f_2)}", no head, from=2-2, to=3-3]
	\arrow["{\frac{\deg(x_1)}{\deg(f(x))}}"', no head, from=3-1, to=4-2]
	\arrow["{\frac{\deg(x_2)}{\deg(f(x))}}", no head, from=3-3, to=4-2]
	\arrow["{\deg(f(x))}"', no head, from=4-2, to=5-2]
\end{tikzcd}\]
    Note for $p \mid k_1$, we have $p \nmid g$ if and only if $p \nmid n_2$. Similarly, for $p \mid k_2$, we have $p \nmid g$ if and only if $p \nmid n_1$. Also, we have $n = n_2 \cdot k_1 = n_1 \cdot k_2$. So, we have $\deg(f_2) = \deg(g_1)$ and $\deg(f_1) = \deg(g_2)$, which gives the following lower bounds:
    \begin{align*}
        \frac{\deg(x_2)}{\deg(f(x))} &\leq \deg(f_1), \\
        \frac{\deg(x_1)}{\deg(f(x))} &\leq \deg(f_2).
    \end{align*}
    
    From the above diagram, we also know by properties of composite fields that
    \begin{align*}
        \deg(f_1) &\leq \frac{\deg(x_2)}{\deg(f(x))},\\
        \deg(f_2) &\leq \frac{\deg(x_1)}{\deg(f(x))}.
    \end{align*}
    
    It follows that
    \begin{align*}
        \deg(x_1) &= \deg(f(x)) \cdot \deg(g_1), \\ \deg(x_2) &= \deg(f(x)) \cdot \deg(g_2).
    \end{align*}
    Thus, we have that $\deg(x) = \deg(f(x)) \cdot \deg(f_2) \cdot \deg(g_2) = \deg(f(x)) \cdot \deg(f).$
That is, $\deg(x) = \deg(f) \cdot \deg(f(x))$, as desired.
\end{proof}
\end{theorem}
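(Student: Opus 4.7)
The plan is to translate the hypothesis $\deg(x) = \deg(f_i) \cdot \deg(f_i(x))$ into residue-field language — equivalently, $[\Q(x) : \Q(f_i(x))] = \deg(f_i)$ — and then apply Theorem \ref{theorem:residuefieldsequal} to identify $\Q(x)$ with the compositum $\Q(x_1)\Q(x_2)$, where $x_i \coloneqq f_i(x)$. Set $g \coloneqq \gcd(n_1, n_2)$ and let $g_i : X_0(n_i) \to X_0(g)$ denote the natural projection, so that $f = g_1 \circ f_1 = g_2 \circ f_2$ and $f(x) = g_1(x_1) = g_2(x_2)$. The desired conclusion then reduces to showing $[\Q(x_1) : \Q(f(x))] = \deg(g_1)$, since multiplying through by $\deg(f_1)$ yields $\deg(x) = \deg(f_1) \cdot \deg(g_1) \cdot \deg(f(x)) = \deg(f) \cdot \deg(f(x))$. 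The degenerate case $g \in \{n_1, n_2\}$ can be dispensed with separately, as the statement is then immediate.

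The two nontrivial steps are a combinatorial identity of map degrees and a two-sided pinch on $[\Q(x_1) : \Q(f(x))]$. For the first, writing $k_i = n_i/g$, the formula from Section \ref{section:degreeofmap} gives
\[
\deg(f_j) = k_i \prod_{p \mid k_i,\, p \nmid n_j} \left(1 + \tfrac{1}{p}\right), \qquad \deg(g_i) = k_i \prod_{p \mid k_i,\, p \nmid g}\left(1 + \tfrac{1}{p}\right),
\]
for $\{i, j\} = \{1, 2\}$, and these products coincide because any prime $p \mid k_i$ already satisfies $p \mid n_i$, so $p \mid n_j$ iff $p \mid \gcd(n_1, n_2) = g$. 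In particular $\deg(f_2) = \deg(g_1)$. For the pinch, the upper bound $[\Q(x_1) : \Q(f(x))] \leq \deg(g_1)$ is immediate from the existence of the map $g_1$, and the lower bound comes from the standard compositum inequality $[L_1 L_2 : L_2] \leq [L_1 : L_1 \cap L_2]$ together with the inclusion $\Q(f(x)) \subseteq \Q(x_1) \cap \Q(x_2)$:
\[
\deg(f_2) = [\Q(x) : \Q(x_2)] = [\Q(x_1)\Q(x_2) : \Q(x_2)] \leq [\Q(x_1) : \Q(x_1) \cap \Q(x_2)] \leq [\Q(x_1) : \Q(f(x))].
\]
Combining these forces $[\Q(x_1) : \Q(f(x))] = \deg(g_1)$, completing the proof.

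The main obstacle I expect is bookkeeping rather than conceptual: verifying the Euler-factor identity $\deg(f_j) = \deg(g_i)$ requires a careful prime-by-prime argument, and it is precisely what converts the compositum inequality into the equality we need. A secondary subtlety is ensuring Theorem \ref{theorem:residuefieldsequal} applies, which requires $j(E) \notin \{0, 1728\}$ — automatic since $E$ is non-CM. With these in place, the conclusion follows by a single multiplication.
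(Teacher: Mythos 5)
Your proof is correct and follows essentially the same route as the paper: invoke Theorem \ref{theorem:residuefieldsequal} to identify $\Q(x)$ with the compositum $\Q(x_1)\Q(x_2)$, establish the Euler-factor identity $\deg(f_j) = \deg(g_i)$ by a prime-by-prime check, and then pinch $[\Q(x_1) : \Q(f(x))]$ between the map-degree upper bound and the compositum lower bound. Your write-up is, if anything, slightly cleaner than the paper's in its explicit statement of the compositum inequality $[L_1L_2 : L_2] \leq [L_1 : L_1 \cap L_2]$ and in flagging that Theorem \ref{theorem:residuefieldsequal} needs $j(E)\notin\{0,1728\}$, which holds automatically for non-CM curves.
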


\begin{theorem}\label{thm:graphuniqueness}
    Let $E/\Q$ be a non-CM elliptic curve. For any vertex $(x, n, d)$, the subgraph induced by $(x, n, d)$ and its descendants has a unique sink.
    \begin{proof}
           Fix a vertex $(x, n, d)$ of the graph $G(E, m)$. Suppose the induced subgraph of its descendants has two distinct sinks, $(x_1, n_1, d_1)$ and $(x_2, n_2, d_2).$

           Let $g \coloneqq \gcd(n_1, n_2).$ Since $(x_1, n_1, d_1)$ and $(x_2, n_2, d_2)$ are both sinks, there is no outgoing edge from either vertex. Observe that we cannot not have that $g$ is a proper divisor of $n_1$ or $n_2$. If $g$ were a proper divisor of both $n_1, n_2$, then there would be an outgoing edge from both $(x_1, n_1, d_1)$ and $(x_2, n_2, d_2)$ to $(x_3, \gcd(n_1, n_2), d_3),$ by Theorem \ref{thm:degreegcd}.
\[\begin{tikzcd}
	& {X_0(n)} \\
	& {X_0(\text{lcm}(n_1, n_2))} \\
	{X_0(n_1)} && {X_0(n_2)} \\
	& {X_0(\gcd(n_1, n_2))}
	\arrow[from=1-2, to=3-1]
	\arrow[from=1-2, to=3-3]
	\arrow[from=2-2, to=3-1]
	\arrow[from=2-2, to=3-3]
	\arrow[from=3-1, to=4-2]
	\arrow[from=3-3, to=4-2]
\end{tikzcd}\]
           
           So, we must have $g = n_1$ or $g = n_2.$ Suppose that $n_1 = g.$ Then, since $g | n_2$, we have that there is a directed edge from $(x_2, n_2, d_2)$ to $(x_1, n_1, d_1)$.
           
           This contradicts the assumption that $(x_2, n_2, d_2)$ is a sink; thus, there is a unique sink of the induced subgraph of descendants of $(x, n, d).$
    \end{proof}
\end{theorem}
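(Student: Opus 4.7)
My plan is to argue by contradiction: suppose the subgraph induced by $(x, n, d)$ and its descendants contains two distinct sinks $(x_1, n_1, d_1)$ and $(x_2, n_2, d_2)$. The strategy is to produce a common descendant of both sinks at the level $g = \gcd(n_1, n_2)$, contradicting the sinkness of at least one. I would split the analysis based on whether $g$ is one of $n_1, n_2$ or a proper divisor of both.

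First I would dispose of the easy case where $g = n_1$ (the case $g = n_2$ being symmetric). Then $n_1$ is a proper divisor of $n_2$. Because both $(x_1, n_1, d_1)$ and $(x_2, n_2, d_2)$ arise as images of $(x, n, d)$ under natural projections, commutativity of the projection maps $X_0(n) \to X_0(n_2) \to X_0(n_1)$ forces the image of $x_2$ in $X_0(n_1)$ to equal $x_1$. Combining the edge $(x, n, d) \to (x_1, n_1, d_1)$ with the edge $(x, n, d) \to (x_2, n_2, d_2)$ via the transitivity result already proven for $G(E, m)$ then yields an edge $(x_2, n_2, d_2) \to (x_1, n_1, d_1)$, contradicting that $(x_2, n_2, d_2)$ is a sink.

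In the remaining case where $g$ is a proper divisor of both $n_1$ and $n_2$, I would reduce to the setup of Theorem \ref{thm:degreegcd} by first passing through the intermediate level $\lcm(n_1, n_2)$. Since $\lcm(n_1, n_2)$ divides $n$, there is either a directed edge from $(x, n, d)$ to a vertex $(x', \lcm(n_1, n_2), d')$ (or that vertex is $(x, n, d)$ itself). Transitivity then gives edges from $(x', \lcm(n_1, n_2), d')$ to each of the sinks $(x_i, n_i, d_i)$, so the degree hypotheses of Theorem \ref{thm:degreegcd} are satisfied at the common ancestor $(x', \lcm(n_1, n_2), d')$. Applying Theorem \ref{thm:degreegcd} produces a further vertex $(x_3, g, d_3)$ in the graph with the degree condition relating $(x', \lcm(n_1, n_2), d')$ and $(x_3, g, d_3)$. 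By another application of transitivity, this yields outgoing edges from each sink $(x_i, n_i, d_i)$ to $(x_3, g, d_3)$, the desired contradiction.

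The main obstacle is the bookkeeping required to invoke Theorem \ref{thm:degreegcd} cleanly: that theorem is phrased with respect to a source vertex at level $\lcm(n_1, n_2)$, so one must verify that the degree conditions inherited from $(x, n, d)$ descend correctly to $(x', \lcm(n_1, n_2), d')$. This is exactly what the transitivity of $G(E, m)$ provides, but it requires being careful to observe that if $(x, n, d) \to (x_i, n_i, d_i)$ factors through $(x', \lcm(n_1, n_2), d')$, then both component edges automatically satisfy the multiplicative degree condition. Once this is in place, the contradiction is immediate from Theorem \ref{thm:degreegcd}.
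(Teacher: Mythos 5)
Your proof is correct and follows the same two-case structure as the paper's: handle the case $g = n_i$ directly, then derive a contradiction from Theorem~\ref{thm:degreegcd} when $g$ is a proper common divisor of $n_1$ and $n_2$. Your write-up is in fact more careful than the paper's prose at two points worth noting: you justify the edge $(x_2,n_2,d_2) \to (x_1,n_1,d_1)$ in the first case via commutativity of the projections and transitivity of $G(E,m)$ (where the paper simply asserts the edge), and you explicitly descend to the level $\lcm(n_1,n_2)$ before invoking Theorem~\ref{thm:degreegcd}, which is needed since that theorem requires the source vertex to sit exactly at the lcm of the two target levels --- a reduction the paper's diagram hints at but its text does not spell out.
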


\subsubsection{Proof of equivalence of definitions}\label{subsubsection:equivalence}

Let $m_0$ denote the reduced level of an elliptic curve $E$.

Recall that by Theorem \ref{Zonia}, we have that if $x \in X_0(n)$ and $f : X_0(n) \to X_0(\gcd(n, m_0))$, then
    \begin{align*}
        \deg(x) &= \deg(f) \cdot \deg(f(x)).
    \end{align*}
    It follows that if $v$ is a sink of $G(E, n)$ for some $n \in \Z^+$, then $n$ must be, in particular, a divisor of $m_0$. Thus, the set of sinks of $G(E, m)$ for $m \geq 1$ is, in particular, the set of sinks of $G(E, m_0).$

\begin{theorem}\label{theorem:defnequivalence}
    The set of points on some $X_0(n)$ corresponding to a sink in $S$ from Definition \ref{defn:algorithmsprimitivepoints} is equal to the set of primitive points given in Definition \ref{defn:myprimitivepoints}.
    \begin{proof}
        Let $E/\Q$ be a non-CM elliptic curve, and denote its reduced level by $m_0$.
        
        First, we show that if a point $x\in X_0(n)$ corresponds to a sink on the graph $G(E, m)$, then $x \in X_0(n)$ is in the set of primitive points $\mathcal{P}(E)$ from Definition \ref{defn:myprimitivepoints}. Let $x \in X_0(n)$ be a a sink in the set $S$. That is, the vertex $(x, n, d)$ is a sink of $G(E, m)$, for some $m \in \Z^+$.
        
        First note that $n|m_0$, and observe that $\gcd(n, m_0) = n$. We could not have $\gcd(n, m_0)$ be a proper divisor $n$. If it were, there would be an outgoing edge $(x, n, d) \to (x', \gcd(n, m_0), \deg(x')),$ where $x'$ is the image of $x$ on $X_0(\gcd(n, m_0))$ under the natural projection map, by Theorem \ref{thm:degreegcd}. This contradicts the fact that the vertex $(x, n, d)$ is a sink of $G(E, m)$.
        
        In particular, the vertex $v$ is a sink of $G(E, m_0).$ Thus, there is no proper divisor $a$ of $n$ for which it is true that $\deg(x) = \deg(f)\cdot\deg(f(x))$, where $f : X_0(n) \to X_0(a)$ denotes the natural map. So $x \in X_0(n)$ belongs to the set of primitive points $\mathcal{P}(E)$ from Definition \ref{defn:myprimitivepoints}.

        Next, we show that if $x \in X_0(d_n)$ is a primitive point, then we must have that $x$ corresponds to a sink in the set $S$ from Definition \ref{defn:algorithmsprimitivepoints}. Let $x \in X_0(d_n)$ be a primitive point, where $d_n | m_0.$
        
        It follows that $(x, d_n, d)$ must be a sink of the graph $G(E, n)$. Toward contradiction, suppose there were an outgoing edge connecting to a vertex $(x', n', d')$, with $n'$ a proper divisor of $d_n$. In particular, $n'$ divides $n.$ Then we immediately contradict the fact that $d_n$ is the \textit{smallest} divisor of $n$ for which the degree condition holds.
    \end{proof}
\end{theorem}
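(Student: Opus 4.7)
The plan is to prove the two containments separately, using the observation stated just before the theorem that sinks of $G(E, m)$ for any $m \geq 1$ must lie at a level dividing $m_0$ (a consequence of Theorem \ref{Zonia}); this lets me reduce to working exclusively with $G(E, m_0)$, so each sink $(x,n,d)$ can be taken to have $n \mid m_0$.

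For the forward containment, I assume $(x, n, d)$ is a sink of $G(E, m_0)$. Being a sink means no edge exits $(x,n,d)$, so by the edge condition in Definition \ref{defn:algorithmsprimitivepoints}, no proper divisor $a$ of $n$ satisfies $\deg(x) = \deg(f) \cdot \deg(f(x))$ for the natural map $f : X_0(n) \to X_0(a)$. Since the degree condition holds trivially at $a = n$, the smallest such divisor $d_n$ in Definition \ref{defn:myprimitivepoints} must equal $n$, so $\mathcal{P}(x) = x$ lies in $\mathcal{P}(E)$.

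For the reverse containment, let $x = \mathcal{P}(y) \in X_0(d_a)$ be a primitive point arising from some $y \in X_0(a)$ with $a \mid m_0$, where $d_a$ is minimal as in Definition \ref{defn:myprimitivepoints}. I show $(x, d_a, \deg(x))$ is a sink of $G(E, m_0)$. Suppose toward contradiction that there is an outgoing edge to a vertex $(x', n', d')$ with $n'$ a proper divisor of $d_a$. Writing $f : X_0(a) \to X_0(d_a)$ and $g : X_0(d_a) \to X_0(n')$ for the natural projections, the existence of this edge combined with the defining degree condition for $\mathcal{P}(y) = x$ and multiplicativity of map degrees yields
\begin{align*}
\deg(y) \;=\; \deg(f) \cdot \deg(x) \;=\; \deg(f) \cdot \deg(g) \cdot \deg(g(x)) \;=\; \deg(g \circ f) \cdot \deg((g \circ f)(y)).
\end{align*}
Since $n'$ is a proper divisor of $d_a$ and hence a proper divisor of $a$, this contradicts the minimality of $d_a$, so no such edge exists and $(x, d_a, \deg(x))$ is a sink.

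The main obstacle is simply unpacking the two definitions and recognizing that each is a minimality statement of a different flavor: Definition \ref{defn:myprimitivepoints} takes the minimal divisor of a fixed $a$ along which the degree condition survives, while Definition \ref{defn:algorithmsprimitivepoints} takes the terminal vertex of a descending path of edges. Once the reduction to $G(E, m_0)$ is in place, the argument requires only multiplicativity of degrees, and neither Theorem \ref{thm:degreegcd} nor Theorem \ref{thm:graphuniqueness} is needed for this particular equivalence — those are structural facts about the graph but are orthogonal to the formal comparison of the two minimality conditions.
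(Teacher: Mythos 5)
Your proof is correct and follows essentially the same route as the paper: reduce to $G(E, m_0)$ using the observation preceding the theorem (which rests on Theorem \ref{Zonia}), then unwind the two minimality conditions in each direction, with the reverse containment resting on multiplicativity of degrees along a composite projection. One small point in your favor: you correctly observe that Theorem \ref{thm:degreegcd} is not actually needed here; the paper's proof does cite it when arguing that a sink cannot have $\gcd(n, m_0)$ a proper divisor of $n$, but the statement being invoked there is Theorem \ref{Zonia}, not Theorem \ref{thm:degreegcd} (which requires two degree conditions as hypotheses and is not applicable in that step) -- your version is the cleaner and more accurately attributed argument.
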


\section{Algorithm for computing primitive points}\label{section:primitivealgorithm}
Let $E/\Q$ be a non-CM elliptic curve, and let $m \in \Z^+$. In this section, we reduce the problem of testing whether $j(E) \in \Q$ is non-isolated to testing whether a finite set of $($level, degree$)$ pairs all correspond to points which are non-isolated. See Theorem \ref{primitive iso iff}.

Recall that the reduced level is the level of the $m$-adic Galois representation associated to $E$, where $m \in \Z^+$ is the product of $2, 3$, and all primes $p$ for which the mod $p$ Galois representation of $E$ is not surjective. Throughout, we will use $m_0$ to denote the reduced level.

\subsection{Algorithm for primitive points}

We describe an algorithm for computing the set of primitive points associated to a non-CM elliptic curve. In Algorithm \ref{algorithm:primitive}, we compute the set of primitive points associated to each $x \in X_0(n)$, for each divisor $n$ of $m_0$. This process returns a sequence
\begin{center}
    \verb|{<a_1, d_1>, ..., <a_k, d_k>}|.
\end{center}

\begin{breakablealgorithm}   
\label{algorithm:primitive}\caption{PrimitiveDegreesOfPoints}
    \begin{algorithmic}[1]
        \Input $G \leq \GL_2(\Z/m_0\Z)$, where $G \coloneqq \im \rho_{E, m_0}$
        \Output{A sequence of primitive level-degree pairs \verb|<a_1, d_1>, ..., <a_n, d_n>| corresponding to closed points $x \in X_0(a_i)$.}
        \For{each divisor $a$ of $m_0$}
        \State Compute $C$, the set of all cyclic submodules $v$ of order $a$ of $(\Z/a \Z)^2$. Each submodule is a possible subgroup of order $a$ corresponding to a point on $X_0(a_i),$ where $a_i$ divides $a.$
        \For {each $v \in C$}
        \State Let $H$ denote the subgroup of $\GL_2(\Z/a\Z)$ given by $G$ mod $a.$
        \State Compute the list of divisors of $a$, given by $a_1, \dots, a_k$ ordered such that $a_1 \leq \dots \leq a_k$.
        \For{ each divisor $a_i$ of $a$}\label{step:divisorlist}
        \State\label{Step:degreexi}In this step, we compute $\deg(f(x))$, where $f : X_0(a) \to X_0(a_i)$ is the natural projection map and $f(x) \in X_0(a_i).$ Define $H_{i} \leq \GL_2(\Z/a_i\Z)$ to be the subgroup $H$ modulo $a_i.$ Also let $L \leq (\Z/a_i\Z)^2$ be generated by the generator of $v$ modulo $a_i$. The degree of $f(x)$ is given by $|\orb_{H_i}(L)|$.
        \State\label{Step:degequal}Now, we check if $\deg(x) = \deg(f) \cdot \deg(f(x)).$ Check if
        \begin{align*}
            | \orb_H(v) | &= \deg(f) \cdot |\orb_{H_i} (L)|.
        \end{align*}
        \State \textit{Case 1:} If we have equality in Step $\ref{Step:degequal}$, then we have found that $a_i$ is the smallest divisor for which the degree condition holds (denoted $d_a$ in Definition \ref{defn:myprimitivepoints}), because our list of divisors from Step \ref{step:divisorlist} is ordered from smallest to largest. Thus, we record the level-degree pair, ($a_i, |\orb_{H_i}(L)|)$, as a primitive point. End the for-loop through the list of divisors of $a_i$ of $a$.
        \State \textit{Case 2:} If equality in Step \ref{Step:degequal} did not hold and $a_i < a$, then we continue to the next divisor in Step \ref{step:divisorlist}. \State \textit{Case 3:} Otherwise, if $a_i = a$, then we must have found that equality in Step \ref{step:divisorlist} did not hold for any proper divisors $a_i < a$. In this case, the point $x \in X_0(a)$ is its own primitive point, and so we record the level-degree pair $(a, |\orb_H(v)|)$ as a primitive point.
        \EndFor
        \EndFor
        \EndFor
    \end{algorithmic}
\end{breakablealgorithm}

To test whether $\deg (x) = \deg (f)\cdot\deg f(x)$ in Step \ref{Step:degequal}, we use the degree computations for the natural projection map $f$ from Section \ref{section:degreeofmap}.

\subsection{Worked example}

\begin{example}
    Let $j = 43307231/82944$, a non-CM $j$-invariant. We construct an elliptic curve $E$ with the given $j$-invariant. We can consider the elliptic curve over $\Q$ with LMFDB label 726.a1, which is given by the equation $y^2+xy=x^3+x^2+21657x-1855179.$
    
    The elliptic curve $E$ has adelic level and reduced level both equal to 4. To compute the set $\mathcal{P}(E)$, we consider the 3 divisors of the reduced level: 1, 2, and 4. So applying our algorithm, we check the closed points above $j = 43307231/82944$ of $X_0(2)$ and $X_0(4)$. Using computations from Section \ref{section:degreeofmap}, the following degrees of the natural projection maps are given as follows.

\[\begin{tikzcd}
	{X_0(4)} \\
	& {X_0(2)} \\
	{X_0(1)}
	\arrow["{\deg(h)=2}", from=1-1, to=2-2]
	\arrow["{\deg(g) = 6}"', from=1-1, to=3-1]
	\arrow["{\deg(f) = 3}", from=2-2, to=3-1]
\end{tikzcd}\]
    
    We first begin with closed points on $X_0(2)$. There is one closed point of degree $3$ above $j = 43307231/82944$ on $X_0(2)$. Since the only divisor of $2$ strictly smaller than 2 is 1, we only consider the degree of $f(x)$ where $f$ is given by the natural projection map from $X_0(2) \to X_0(1)$. The map $f: X_0(2) \to X_0(1)$ is of degree 3, so we have that for each of these points, the degree condition $\deg(x) = \deg(f) \cdot \deg(f(x))$ holds when mapping to a degree 1 point on $X_0(1)$. Thus, the primitive point of the closed degree 3 point on $X_0(2)$ is represented by the (level, degree) pair $<$1, 1$>$.

    Next, we consider the closed points above above $j = 43307231/82944$ on $X_0(4)$, all of which are degree 3. Note that for $X_0(4)$, there are two maps to consider: the map $g: X_0(4) \to X_0(1)$, and the map $h: X_0(4) \to X_0(2),$ where $\deg(g) = 6$ and $\deg(h) = 2$. The primitive points are defined such that we take the image of the point on the modular curve with the smallest level, so we check the degree condition on $g$ before $h$. Observe that $\deg(x) \neq \deg(g)\cdot\deg(g(x))$, so $g$ cannot give rise to a primitive point.

    Next, observe that $\deg(x) \neq \deg(h)\cdot \deg(h(x))$, so $h$ also cannot give rise to a primitive point. Hence, the degree 3 points on $X_0(4)$ are their own primitive points.
    
    So the resulting set $\mathcal{P}(E)$, recorded as (level, degree) pairs, is given by $<1, 1>$ and $<4, 3>$.
\end{example}

\section{Validity of main algorithm}\label{section:validity}
In this section, we prove the validity of our main algorithm, Algorithm $\ref{algorithm:main}$.

\begin{theorem}\label{validity}
    Let $j \in \Q$ be a non-CM $j$-invariant. If Algorithm \ref{algorithm:main} returns
\begin{center}
    \verb|<a_1, d_1>, ..., <a_k, d_k>|,
\end{center}
then any isolated point $x \in X_0(n)$ for $n \in \Z^+$ with $j(x) = j$ maps, via the natural map, to an isolated point of degree $d_i$ on $X_0(a_i)$ for some $1 \leq i \leq k$.

\begin{proof}
    Let $E/\Q$ be an elliptic curve with $j(E) = j$. Note that the degrees of closed points do not depend on choice of an equation, so the choice of $E$ is arbitrary.

    Let $a \in \Z^+$ denote the level of the adelic Galois representation of $E$. We can obtain the image of the adelic Galois representation $G = \im \rho_E$ via Zywina's algorithm \cite{zywina}, and the output of this computation is represented by $G(a)$, where $G(a) \leq \GL_2(\Z/a\Z).$
    As proven in Proposition 22 of \cite{bourdon2024classification}, we can use the adelic image $G(a)$ to now compute the level of the $m$-adic Galois representation that is associated to $E$, where $m$ is the product of $2, 3$, and all primes $p$ for which the mod $p$ Galois representation of $E$ is not surjective. We let $m_0$ denote the level of this $m$-adic Galois representation.

    Recall that by Theorem \ref{primitive iso iff}, there exists a closed point $x \in X_0(d)$ for some $d \in \Z^+$ which is isolated if and only if there exists an isolated point corresponding to a point in the set $\mathcal{P}(E)$. So we compute the set of (level, degree) pairs corresponding to points in $\mathcal{P}(E)$, by applying Algorithm \ref{algorithm:primitive} to $G = G(m_0)$ and each divisor $a$ of $m_0$. For each closed point on $X_0(a)$, we record its associated primitive point $\mathcal{P}(x)$ on $X_0(d_a)$, as defined in Section \ref{subsec:primitivepts}, thus obtaining the set
    \verb|{<a_1, d_1>, ..., <a_m, d_m>}|.
    
    In the next set of filters, we eliminate (level, degree) pairs corresponding to points in $\mathcal{P}(E)$ which we know must correspond to parameterized points.
    
    First, we rule out points from being isolated via the Riemann-Roch Theorem. Pairs $(a_i, d_i)$ for which $d_i$ is strictly greater than $\genus(X_0(a_i))$ are non-isolated, by Lemma \ref{thm:riemannrochlemma}. So now consider points $x$ such that $\deg(x) \leq g(X_0(n))$.

    We next remove any (level, degree) pairs $(a_i, d_i)$ from our list for which the mod $a_i$ Galois representation $\im \rho_{E, a_i}$ is genus 0, which are not $\mathbb{P}^1$-isolated by Theorem \ref{thm38}. Denote the resulting list of $($level, degree$)$ pairs \verb|{<a_1, d_1>, ..., <a_k, d_k>}|.
    
    Any isolated point $x \in X_0(n)$ for any $n \in \Z^+$ with $j(x) = j$ maps via the natural projection map to an isolated point of degree $d_i$ on $X_0(a_i)$ for some $i \in \{1, \dots, k\}$.
\end{proof}
\end{theorem}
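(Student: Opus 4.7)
The plan is to trace Algorithm \ref{algorithm:main} step by step and verify that any isolated closed point $x \in X_0(n)$ with $j(x) = j$ descends, under the natural projection, to a (level, degree) pair appearing in the algorithm's output.

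First I would fix any elliptic curve $E/\Q$ with $j(E) = j$; the choice of model is harmless since degrees of closed points above a fixed $j$-invariant depend only on $j$. Next I would invoke Zywina's algorithm \cite{zywina} to produce the adelic image $G = \im \rho_E$, and compute the reduced level $m_0$ of $E$ from $G$, using the subroutine of \cite[Proposition 22]{bourdon2024classification}. This provides the mod $m_0$ image, which is the input to the primitive points subroutine, Algorithm \ref{algorithm:primitive}.

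The main reduction comes from Theorem \ref{primitive iso iff}, together with the mechanism behind it. If $x \in X_0(n)$ is isolated with $j(x) = j$, Theorem \ref{Zonia} forces the degree condition $\deg(x) = \deg(f)\cdot \deg(f(x))$ for the natural map $f: X_0(n) \to X_0(\gcd(n, m_0))$, so by Theorem \ref{belovdeg} the image $f(x)$ is isolated on $X_0(\gcd(n, m_0))$. Iterating downward until the degree condition first fails yields the primitive point $\mathcal{P}(x)$, which remains isolated by Theorem \ref{belovdeg} and lives on $X_0(d_a)$ for some divisor $d_a$ of $m_0$. By construction, Algorithm \ref{algorithm:primitive} enumerates exactly such pairs $(d_a, \deg \mathcal{P}(x))$ as it ranges over divisors of $m_0$, so $\mathcal{P}(x)$ appears in the list before any filtering is applied.

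The remaining task is to check that neither filter in Steps \ref{step:riemannroch} and \ref{step:genus0} can discard the pair corresponding to an isolated primitive point. The Riemann--Roch filter removes pairs $\verb|<a_i, d_i>|$ with $d_i$ exceeding $\genus(X_0(a_i))$; by Lemma \ref{thm:riemannrochlemma} any such primitive point is $\mathbb{P}^1$-parameterized and therefore not isolated. The genus-$0$ image filter removes pairs for which the mod $a_i$ image of $E$ has genus zero; by Theorem \ref{thm38} every closed point on $X_0(a_i)$ sharing the $j$-invariant of $E$ is then $\mathbb{P}^1$-parameterized, so again nothing isolated is lost. The subtle point I expect to be the main obstacle is verifying that both filtering results apply to the primitive point $\mathcal{P}(x)$ rather than to the original $x$; this is ultimately immediate, because $\mathcal{P}(x)$ is itself a closed point on $X_0(a_i)$ with $j$-invariant $j(E)$, which is exactly the hypothesis of both lemmas. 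Combining the primitive-points reduction with the soundness of the two filters yields the claim.
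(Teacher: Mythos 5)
Your proposal is correct and follows essentially the same route as the paper's proof: reduce to the finite set of primitive points via Theorem~\ref{primitive iso iff}, then verify that the Riemann--Roch and genus-$0$ filters only discard $\mathbb{P}^1$-parameterized (hence non-isolated) pairs via Lemma~\ref{thm:riemannrochlemma} and Theorem~\ref{thm38}. The only stylistic difference is that you unwind the mechanism behind Theorem~\ref{primitive iso iff} (citing Theorems~\ref{Zonia} and~\ref{belovdeg} directly, with a slightly imprecise ``iterate downward until the degree condition fails'' gloss on the definition of $\mathcal{P}(x)$), whereas the paper simply invokes the theorem as a black box; the content is the same.
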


\begin{corollary}\label{corollaryempty}
    Let $j\in \Q$ be a non-CM $j$-invariant. If applying Algorithm \ref{algorithm:main} to $j$ yields an empty list \verb|{}|, then $j$ is not the image of an isolated point on $X_0(n)$ for any $n\in \Z^+$; that is, $j$ is not isolated.
\end{corollary}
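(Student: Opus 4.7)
The plan is to derive this corollary as the direct contrapositive of Theorem \ref{validity}, which has already established that every isolated point above $j$ on any $X_0(n)$ must project onto an isolated point recorded in the algorithm's output list. All the substantive work — reducing to primitive points via Theorem \ref{primitive iso iff}, pruning by the Riemann--Roch bound of Lemma \ref{thm:riemannrochlemma}, and pruning by the genus $0$ result of Theorem \ref{thm38} — is already packaged inside Theorem \ref{validity}, so the corollary should follow in just a few lines.

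Concretely, I would argue by contradiction. Suppose $j \in \Q$ is a non-CM $j$-invariant for which Algorithm \ref{algorithm:main} returns the empty list, and suppose toward contradiction that $j$ is isolated. By the definition of an isolated $j$-invariant, there exist some $n \in \Z^+$ and an isolated closed point $x \in X_0(n)$ with $j(x) = j$. Applying Theorem \ref{validity} to this point $x$, I would obtain an index $i$ with $1 \leq i \leq k$ and a pair \verb|<a_i, d_i>| in the output such that the natural projection $X_0(n) \to X_0(a_i)$ sends $x$ to an isolated point of degree $d_i$ on $X_0(a_i)$.

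Since the returned list is empty, however, we have $k = 0$, so no such index $i$ exists. This contradiction forces the conclusion that no isolated closed point $x \in X_0(n)$ with $j(x) = j$ exists for any positive integer $n$; equivalently, $j$ is not an isolated $j$-invariant on any $X_0(n)$.

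There is essentially no obstacle in this argument, since Theorem \ref{validity} was stated strongly enough that the empty-list case is covered by the vacuous interpretation of the existential quantifier over $i$. The only care needed is to make explicit that ``$j$ is isolated'' unwinds to the existence of some isolated $x$ on some $X_0(n)$ with $j(x) = j$, so that Theorem \ref{validity} can indeed be invoked; this follows directly from the definition of an isolated $j$-invariant given in Section \ref{subsec:isolated}.
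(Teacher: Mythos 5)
Your argument is correct and matches the paper's intent: the paper states Corollary~\ref{corollaryempty} as an immediate consequence of Theorem~\ref{validity} with no separate proof, and the contrapositive reading you give is exactly the implicit justification. Nothing further is needed.
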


\section{An isolated $j$-invariant for $X_1(n)$ that is not isolated for $X_0(n)$}\label{section:351/4}
Let $E$ be the elliptic curve over $\Q$ with LMFDB label 338.c2, which is given by the equation $y^2 + xy = x^3 - x^2 + x + 1,$ and has $j$-invariant $\frac{351}{4}$.

The adelic level of $E$ is 364, and the reduced level of $E$ is 28. Applying Algorithm \ref{algorithm:primitive}, we obtain the following list of (level, degree) pairs corresponding to the primitive points of $E$:
\begin{align*}
    <28, 21>, <4, 3>, <7, 7>, <1, 1>, <28, 3>, <7, 1>.
\end{align*}
Now, we apply both filters to this set. Note that $\genus(X_0(28)) = 2,$ $\genus(X_0(4)) = 0,$ $\genus(X_0(7)) = 0,$ and $\genus(X_0(1)) = 0.$
Lemma \ref{thm:riemannrochlemma} gives us that all points corresponding to this set of (level, degree) pairs are $\mathbb{P}^1$-parameterized. Hence, Algorithm \ref{algorithm:main} returns an empty list which shows that $j = 351/4$ is non-isolated for $X_0(n)$.

Notably, this $j = 351/4$ corresponds to an isolated point of degree 9 on $X_1(28)$, by \cite[Theorem 2]{odddegree}. Also recall that it was shown by \cite{bourdon2024classification} that all of the 14 exceptional $j$-invariants (which do give isolated points for $X_0(n)$) are non-isolated for $X_1(n)$. Therefore, we can conclude the following.

\begin{theorem}\label{subsets result revisited}
    The set of rational $j$-invariants corresponding to isolated points on $X_1(n)$ is neither a subset nor a superset of those corresponding to isolated points on $X_0(n)$.
\end{theorem}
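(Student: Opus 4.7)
The plan is to prove the theorem by exhibiting explicit $j$-invariants that witness each of the two non-containments. The introduction already records the relevant candidates: the 14 rational non-CM isolated $j$-invariants for $X_0(n)$ listed in Table \ref{table1}, and the $j$-invariant $351/4$, which is known to give an isolated point of degree $9$ on $X_1(28)$ by \cite[Theorem 2]{odddegree}. Thus the task reduces to checking (i) that some element of Table \ref{table1} is \emph{not} isolated on $X_1(n)$ for any $n$, and (ii) that $351/4$ is \emph{not} isolated on $X_0(n)$ for any $n$.

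For (i), the plan is to appeal directly to the analogous algorithm of Bourdon, Hashimoto, Keller, Klagsbrun, Lowry-Duda, Morrison, Najman, and Shukla \cite{bourdon2024classification}, which decides non-isolation on $X_1(n)$: they establish that only three of the fourteen $j$-invariants in Table \ref{table1} (namely $-140625/8$, $-9317$, and $-162677523113838677$) are isolated on $X_1(n)$. Any one of the remaining eleven $j$-invariants therefore serves as a witness that the set of isolated $j$-invariants for $X_0(n)$ is not contained in the set of isolated $j$-invariants for $X_1(n)$.

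For (ii), the plan is to run Algorithm \ref{algorithm:main} on $j = 351/4$. First, realize the $j$-invariant by a convenient model (the curve \texttt{338.c2}, $y^2+xy = x^3 - x^2 + x + 1$), invoke Zywina's algorithm \cite{zywina} to obtain the adelic image, and use \cite[Algorithm 2]{bourdon2024classification} to reduce to the level of the $m$-adic representation, where $m$ is the product of $2$, $3$, and the non-surjective primes. Next, apply Algorithm \ref{algorithm:primitive} across the divisors of the reduced level to enumerate the finite set $\mathcal{P}(E)$ of primitive points. By Theorem \ref{primitive iso iff}, $j(E)$ will be isolated if and only if some element of $\mathcal{P}(E)$ is isolated, so the remaining step is to eliminate every member of $\mathcal{P}(E)$ using the two available filters: Lemma \ref{thm:riemannrochlemma} (if the degree exceeds the genus of the relevant $X_0(a)$, the point is $\mathbb{P}^1$-parameterized) and Theorem \ref{thm38} (if the mod-$a$ image is of genus $0$, any lying-over point is $\mathbb{P}^1$-parameterized). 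If every primitive level-degree pair is killed by one of these filters, the algorithm returns the empty list and Corollary \ref{corollaryempty} yields the desired non-isolation.

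The main obstacle I expect is step (ii): one must correctly compute the reduced level and then verify that the Riemann--Roch and genus-$0$ filters together suffice to eliminate every primitive pair. Given the very small conductor and the fact that the non-surjective primes are expected to be only $2$ and $7$, the reduced level should be manageable (I would guess $28$), and the relevant curves $X_0(1), X_0(2), X_0(4), X_0(7), X_0(14), X_0(28)$ have genera $0, 0, 0, 0, 1, 2$, making it plausible that all primitive degrees exceed the corresponding genus or sit above a genus-$0$ mod-$a$ image. If any stubborn primitive pair survives both filters, a supplementary argument (for instance, a direct examination of quadratic or cubic points on the relevant $X_0(a)$) would be needed, but I anticipate the filters alone to close the argument.
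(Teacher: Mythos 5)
Your proposal follows essentially the same route as the paper: one direction is witnessed by the eleven Table~\ref{table1} $j$-invariants that \cite[Theorem~2]{bourdon2024classification} shows are not isolated on any $X_1(n)$, and the other by running Algorithm~\ref{algorithm:main} on $j = 351/4$ with the curve \texttt{338.c2}, where the reduced level is indeed $28$ and the Riemann--Roch filter (Lemma~\ref{thm:riemannrochlemma}) alone eliminates every primitive level-degree pair, yielding the empty list and hence non-isolation via Corollary~\ref{corollaryempty}. Your anticipated steps, including the guess of $m_0 = 28$ and the relevant genera, match the paper's computation.
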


\begin{appendices}
\section{On work of Menendez, by Abbey Bourdon}\label{appendix:Menendez}
The results of this section appear in the PhD thesis of Zonia Menendez \cite{Zonia}. Since the resulting paper is not yet available on arXiv and the notation and conventions of \cite{Zonia} are somewhat inconsistent with the present work, we include a short proof of the results needed for ease of reference.

For a non-CM elliptic curve $E$ over a number field $k$, define
\[
S_E \coloneqq \{2,3\} \cup \{ \ell : \rho_{E,\ell^{\infty}}(\Gal_{k}) \not\supset \SL_2(\mathbb{Z}_{\ell})\} \cup \{5, \text{ if } \rho_{E,5^{\infty}}(\Gal_{k}) \neq \GL_2(\mathbb{Z}_{5})\}.
\]

\begin{theorem}[Menendez, \cite{Zonia}]\label{MainThm}
Fix a non-CM elliptic curve $E$ defined over $k=\Q(j(E))$. Let $S=S_{E}$, and let $\mathfrak{m}_S \coloneqq \prod_{\ell \in S} \ell$. Let $M$ be a positive integer with $\Supp(M) \subset S$ satisfying
\[
\im \rho_{E, \mathfrak{m}_S^{\infty}}=\pi^{-1}(\im \rho_{E,M}).
\]
If $x \in X_0(n)$ is a closed point with $j(x)=j(E)$, then $\deg(x)=\deg(f) \cdot \deg(f(x))$, where $f$ denotes the natural map $X_0(n) \rightarrow X_0(\gcd(n,M))$.
\end{theorem}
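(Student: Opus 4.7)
The plan is to translate the degree equality into a statement about the size of Galois orbits on cyclic subgroups of $E[n]$, and then verify the equality by exploiting the hypothesis on $M$ separately on the ``$S$-part'' and the ``$S'$-part'' of $n$.

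First, I would set up the translation. Let $x=(E,\langle P\rangle)\in X_0(n)$ and let $\langle Q\rangle = \langle (n/n_0)P\rangle$ where $n_0 = \gcd(n,M)$, so $y \coloneqq f(x) = (E, \langle Q\rangle) \in X_0(n_0)$. Then $\deg(x) = |\orb_{\im\rho_{E,n}}(\langle P\rangle)|$ and $\deg(y) = |\orb_{\im\rho_{E,n_0}}(\langle Q\rangle)|$, while $\deg(f)$ equals the number of cyclic order-$n$ subgroups of $E[n]$ projecting to $\langle Q\rangle$ via multiplication by $n/n_0$. The inequality $\deg(x)\le\deg(f)\cdot\deg(y)$ follows automatically from the way fibers of finite maps decompose, so it suffices to show that the preimage in $\im\rho_{E,n}$ of $\stab_{\im\rho_{E,n_0}}(\langle Q\rangle)$ acts transitively on the set of cyclic order-$n$ lifts of $\langle Q\rangle$.

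Second, decompose. Factor $n = n_S\cdot n_{S'}$ with $\Supp(n_S)\subseteq S$ and $\gcd(n_{S'},\mathfrak{m}_S)=1$. Because $\Supp(M)\subseteq S$, one has $n_0=\gcd(n_S,M)$, and the Chinese Remainder Theorem gives $E[n]\cong E[n_S]\oplus E[n_{S'}]$; the cyclic order-$n$ lifts of $\langle Q\rangle$ are then in bijection with pairs (cyclic order-$n_S$ lift of $\langle Q\rangle$ in $E[n_S]$, cyclic order-$n_{S'}$ subgroup of $E[n_{S'}]$). Reducing the hypothesis $\im\rho_{E,\mathfrak{m}_S^\infty}=\pi^{-1}(\im\rho_{E,M})$ modulo $n_S$ yields $\im\rho_{E,n_S}=\pi^{-1}(\im\rho_{E,n_0})$, and a short linear-algebra argument shows that a full preimage of this form acts transitively on cyclic lifts within $E[n_S]$. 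For the $S'$-factor, the defining property of $S$ gives $\rho_{E,\ell^\infty}(\Gal(\overline{k}/k))\supseteq\SL_2(\Z_\ell)$ for every $\ell\mid n_{S'}$, so $\im\rho_{E,n_{S'}}\supseteq\SL_2(\Z/n_{S'}\Z)$, which already acts transitively on cyclic order-$n_{S'}$ subgroups of $E[n_{S'}]$.

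Finally, combine the two factors. This is where the main obstacle lies: $\im\rho_{E,n}$ is generally a proper subgroup of $\im\rho_{E,n_S}\times\im\rho\_{E,n_{S'}}$ because the determinant of $\rho_{E,n}$ is fixed to be the cyclotomic character, coupling the two factors. To get joint transitivity it is enough to exhibit, inside the stabilizer of $\langle Q\rangle$ in $\im\rho_{E,n}$, elements of the form $(1_{S},g_{S'})$ with $g_{S'}$ ranging over a transitive subgroup of $\SL_2(\Z/n_{S'}\Z)$ acting on cyclic subgroups of $E[n_{S'}]$. The natural candidate is the image $\rho_{E,n}(\Gal(\overline{k}/k(\zeta_n)))$, on which the determinant is trivial; using that $\rho_{E,n_{S'}}(\Gal(\overline{k}/k(\zeta_{n_{S'}})))\supseteq\SL_2(\Z/n_{S'}\Z)$ and a Goursat-type argument (noting that $\SL_2(\Z/n_{S'}\Z)$ is its own commutator subgroup in the relevant range and shares no abelian quotient with the $n_S$-part), one deduces that the joint image contains $\{1\}\times\SL_2(\Z/n_{S'}\Z)$. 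Together with the $S$-part transitivity this gives the required joint transitivity. The technical heart of the proof is therefore this final combining step, essentially an application of Serre's lemma to the adelic image of a non-CM elliptic curve; the two orbit computations on either side are comparatively routine once the hypothesis on $M$ is in hand.
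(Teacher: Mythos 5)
Your overall plan --- rephrase the degree equality as transitivity of the Galois action on cyclic lifts of $\langle Q\rangle$, separate $n = n_S n_{S'}$ via CRT, and then combine the two halves --- is a legitimate alternative route to the paper's proof. The paper instead strips off the primes $\ell \notin S_E$ one at a time using Proposition \ref{PropSurjectivePrimes} (which cites \cite[Proposition 5.7]{belov} for the maximality of $[k(P):k(\ell^sP)]$) and then runs a pure degree count over $k(E[d])$ for the remaining $S$-part; no joint-transitivity statement ever has to be formulated. The price of your route is that the ``combining'' step --- which you correctly flag as the technical heart --- must be proved from scratch, and the justification you give for it has a genuine gap.

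You claim Goursat applies because ``$\SL_2(\Z/n_{S'}\Z)$ is its own commutator subgroup in the relevant range and shares no abelian quotient with the $n_S$-part.'' This is true but vacuous: a perfect group has \emph{no} nontrivial abelian quotients, so ``sharing no abelian quotient'' rules out nothing. What Goursat actually requires is that the $n_S$-projection and $\SL_2(\Z/n_{S'}\Z)$ share no nontrivial common quotient at all. Since $\SL_2(\Z/n_{S'}\Z)$ is perfect, any nontrivial quotient is again perfect, hence non-solvable, hence has some $\PSL_2(\F_\ell)$ with $\ell \mid n_{S'}$ among its composition factors; you therefore need to rule out $\PSL_2(\F_\ell)$, for $\ell \mid n_{S'}$, occurring as a composition factor of a subgroup of $\GL_2(\Z/n_S\Z)$. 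For $\ell \geq 7$ this is Dickson's classification of subgroups of $\mathrm{PGL}_2(\F_p)$, but $\ell = 5$ is a genuine exception ($\PSL_2(\F_5) \cong A_5$ does embed in $\mathrm{PGL}_2(\F_p)$ for $p \equiv \pm 1 \pmod{5}$), which is precisely why the appendix's definition of $S_E$ imposes the stronger full-$\GL_2(\Z_5)$ condition at $5$. Your sketch addresses neither the $\PSL_2$ issue nor the special role of $5$. Two further points in the same vein: the containment $\im\rho_{E,n_{S'}} \supseteq \SL_2(\Z/n_{S'}\Z)$ already requires a Goursat argument across the distinct primes dividing $n_{S'}$, and passing from $\rho_{E,n_{S'}}(\Gal(\overline{k}/k(\zeta_{n_{S'}})))$ to the $n_{S'}$-projection of $\rho_{E,n}(\Gal(\overline{k}/k(\zeta_n)))$ needs its own linear-disjointness step. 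In short, the two ``routine'' orbit computations really are routine, but the combining step is exactly what \cite[Proposition 5.7]{belov} encapsulates, and your sketch does not reproduce its content.
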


\subsection{Linear disjointness of residue fields} 
\begin{proposition}[Menendez, \cite{Zonia}]\label{PropSurjectivePrimes}
Let $E$ be a non-CM elliptic curve over $k=\Q(j(E))$, let $\ell$ be a prime not contained in $S_E$, and let $a$ and $s$ be positive integers. Let $x \in X_0(a\ell^s)$ be a closed point with $j(x)=j(E)$. Then
\[
\deg(x)=\deg(f)\deg(f(x)),
\]
where $f\colon X_0(a \ell^s) \rightarrow X_0(a)$ is the natural map.
\end{proposition}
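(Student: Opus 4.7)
The plan is to translate the identity $\deg(x) = \deg(f) \cdot \deg(f(x))$ into a statement about Galois orbits of cyclic subgroups of $E$ and then reduce to a linear disjointness assertion enabled by the hypothesis $\ell \notin S_E$. I will begin by decomposing the cyclic subgroup $C$ of order $a\ell^s$ corresponding to $x$ as $C = C_\ell \oplus C'$, where $C_\ell \subseteq E[\ell^t]$ is the $\ell$-primary part (with $t = s + v_\ell(a)$) and $C'$ is the prime-to-$\ell$ part. The map $f$ sends $C$ to $\ell^s C = D_\ell \oplus C'$, where $D_\ell = C_\ell[\ell^{v_\ell(a)}]$, so the residue fields satisfy $k(x) = k(C_\ell) \cdot k(C')$ and $k(f(x)) = k(D_\ell) \cdot k(C')$. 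The target identity then reduces to the assertion $[k(C_\ell)k(C') : k(D_\ell)k(C')] = \deg(f)$.

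The key reduction is via Goursat's lemma. Setting $n_\ell = \ell^t$ and $n' = a/\ell^{v_\ell(a)}$, the CRT decomposition gives $\rho_{E, a\ell^s}(\Gal_k) \subseteq G_\ell \times G'$, where $G_\ell \supseteq \SL_2(\Z/n_\ell\Z)$ by the hypothesis. Goursat identifies the image as a fiber product over a common quotient, and this common quotient is precisely $\Gal(k(E[n_\ell]) \cap k(E[n'])/k)$. Once we show this common quotient is trivial, the image is the direct product $G_\ell \times G'$, the Galois orbit of $(C_\ell, C')$ factors as a product of orbits, and the desired index becomes the ratio of orbit sizes on the $\ell$-adic side. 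Transitivity of the $\SL_2(\Z/n_\ell\Z)$-action on cyclic subgroups of $E[\ell^t]$ of order $\ell^t$, together with a direct count of cyclic subgroups lifting $D_\ell$, then yields $[k(C_\ell) : k(D_\ell)] = \ell^s$ when $\ell \mid a$ and $\ell^{s-1}(\ell+1)$ when $\ell \nmid a$, matching the formula $\deg(f) = \ell^s \prod_{p \mid \ell^s,\, p \nmid a}(1 + 1/p)$ recalled earlier in the paper.

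The hard part will be proving the linear disjointness $k(E[\ell^\infty]) \cap k(E[M]) = k$ for all positive integers $M$ coprime to $\ell$. Let $L$ denote this intersection, a finite Galois extension of $k$, so that $\Gal(L/k)$ is a common quotient of $\rho_{E,\ell^\infty}(\Gal_k)$ and $\rho_{E,M}(\Gal_k)$. Since $\ell \geq 5$ and $\rho_{E,\ell^\infty}(\Gal_k) \supseteq \SL_2(\Z_\ell)$, the group $\SL_2(\Z_\ell)$ is topologically perfect and its only non-abelian simple subquotients are copies of $\PSL_2(\F_\ell)$; on the other hand, the non-abelian composition factors of $\rho_{E,M}(\Gal_k) \subseteq \GL_2(\Z/M\Z)$ are $\PSL_2(\F_p)$ for primes $p \mid M$ with $p \geq 5$, and $\PSL_2(\F_\ell) \not\cong \PSL_2(\F_p)$ for $\ell \neq p$ both $\geq 5$ by order considerations. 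This rules out common non-abelian composition factors and forces $\Gal(L/k)$ to be solvable. The abelian case then reduces to checking that $k(\mu_{\ell^\infty})$ meets the maximal abelian subextension of $k(E[M])/k$ only in $k$; the main subtlety arises at $\ell = 5$, where the exceptional isomorphism $\PSL_2(\F_4) \cong \PSL_2(\F_5) \cong A_5$ and the inclusion $\Q(\sqrt{5}) \subset \Q(\mu_5)$ could in principle permit small residual entanglements with $k(E[2^\infty])$. This is precisely why $S_E$ includes $5$ unless the full surjectivity $\rho_{E,5^\infty}(\Gal_k) = \GL_2(\Z_5)$ is known, a hypothesis that rigidifies the 5-adic image enough to rule out any such sporadic entanglements and complete the argument.
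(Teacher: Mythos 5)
Your proposed approach is genuinely different from the paper's. The paper does not attempt to control the full Galois image via Goursat's lemma; it quotes \cite[Proposition~5.7]{belov} to conclude that $[k(P):k(\ell^s P)] = (\ell^2-1)\ell^{2s-2}$ is as large as possible, and then converts this point-degree statement into the cyclic-subgroup-degree statement $[k(\langle P\rangle):k(\langle\ell^s P\rangle)] = \ell^{s-1}(\ell+1)$ by a short inequality-squeeze: if the latter index were smaller, the tower $k(\ell^s P) \subseteq k(\ell^s P)(\langle P\rangle) \subseteq k(P)$ would force $[k(P):k(\ell^s P)] < \varphi(\ell^s)\cdot\ell^{s-1}(\ell+1) = (\ell^2-1)\ell^{2s-2}$, a contradiction. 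In contrast, you attempt to compute the orbit of $\langle P\rangle$ directly.

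The genuine gap is in the linear disjointness you rely on. The claim $k(E[\ell^\infty]) \cap k(E[M]) = k$ for all $M$ coprime to $\ell$ is false in general, even when $\rho_{E,\ell^\infty}$ is fully surjective. Already over $\Q$: if $\Delta_E \in -\ell\cdot(\Q^\times)^2$ (which happens for many $E$ with $\rho_{E,\ell}$ surjective), then $\Q(\sqrt{-\ell}) = \Q(\sqrt{\Delta_E}) \subseteq \Q(E[2])$, and since $\Q(\sqrt{\ell^*}) \subseteq \Q(\mu_\ell) \subseteq \Q(E[\ell])$ via the Weil pairing, the intersection $\Q(E[\ell^\infty]) \cap \Q(E[2^\infty])$ contains this quadratic field. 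So the Goursat common quotient can certainly be a nontrivial abelian group, your image is not a direct product, and the reduction ``the Galois orbit of $(C_\ell, C')$ factors as a product of orbits'' does not follow. Your composition-factor argument correctly rules out a non-abelian common quotient, but the subsequent reduction to solvable quotients and the invocation of $\PSL_2(\F_4)\cong\PSL_2(\F_5)$ and $\Q(\sqrt 5)\subset\Q(\mu_5)$ cannot close the gap, because the abelian entanglement above is real and ubiquitous.

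What saves the day — and what you should have aimed for instead of full linear disjointness — is the strictly weaker statement that $\SL_2(\Z/\ell^s\Z)$ lies in the kernel of the Goursat quotient map on the $\ell$-adic side, equivalently that $\rho_{E,\ell^s}(\Gal(\overline k/k(E[a]))) \supseteq \SL_2(\Z/\ell^s\Z)$. That suffices, since $\SL_2(\Z/\ell^s\Z)$ already acts transitively on order-$\ell^s$ cyclic subgroups. And it follows from your composition-factor observation alone: the image of $\SL_2(\Z/\ell^s\Z)$ in the common quotient $Q$ is a quotient of $\SL_2(\Z/\ell^s\Z)$, and for $\ell\geq 5$ every nontrivial quotient of $\SL_2(\Z/\ell^s\Z)$ surjects onto the simple group $\PSL_2(\F_\ell)$ (the normal subgroups are exactly the $\Gamma(\ell^i)$ and $\{\pm I\}\Gamma(\ell^i)$), which is not a subquotient of $\GL_2(\Z/a\Z)$ when $\ell\nmid a$. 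The abelian entanglement you worried about is harmless: it lives in the determinant direction, where it cannot shrink the orbit of a cyclic subgroup.
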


\begin{proof}
If $a=b\ell^t$ with $\ell \nmid b$, we let $g\colon X_0(a) \rightarrow X_0(b)$ and $h\colon  X_0(a \ell^s) \rightarrow X_0(b)$ so $h=g \circ f$. Since $\deg(h)=\deg(f)\cdot\deg(g)$, it suffices to consider the case where $\ell \nmid a$. Let $P \in E$ be a point of order $a \ell^s$ such that $x=[E, \langle P \rangle]$. As in the proof of \cite[Proposition 5.7]{belov}, the assumptions imply that
\[
[k(P):k(\ell^sP)]=\#\{Q\in E:\ell^sQ=\ell^sP, Q \text{ order } a\ell^s\}=(\ell^2-1)\ell^{2s-2}.
\]
That is, this extension is as large as possible. We will show this implies 
\[
[k(\langle P\rangle):k(\langle \ell^sP \rangle)]=\ell^{s-1}(\ell+1).
\]

Note $[k(\langle P\rangle):k(\langle \ell^sP \rangle)] \leq \deg(X_0(a\ell^s) \rightarrow X_0(a))=\ell^{s-1}(\ell+1)$. So suppose for the sake of contradiction that $[k(\langle P\rangle):k(\langle \ell^sP \rangle)] <\ell^{s-1}(\ell+1)$. This implies
\begin{align*}
(\ell^2-1)\ell^{2s-2}&=[k(P):k(\ell^sP)]\\
&=[k(P):k(\ell^sP)(\langle P \rangle)]\cdot [k(\ell^sP)(\langle P \rangle) : k(\ell^sP)]\\
&< \varphi(\ell^s)\cdot \ell^{s-1}(\ell+1)\\
&=(\ell^2-1)\ell^{2s-2},
\end{align*}
and we have a contradiction. So $[k(\langle P\rangle):k(\langle \ell^sP \rangle)]=\ell^{s-1}(\ell+1)$, as claimed. Since $k(\langle P \rangle)=\Q(x)$ and $k(\langle \ell^sP \rangle)=\Q(f(x))$, the result follows.
\end{proof}

\subsection{Proof of Theorem \ref{MainThm}}
Let $x=[E,\langle P\rangle ] \in X_0(n)$ be a closed point with $j(x)=j(E)$. By applying Proposition \ref{PropSurjectivePrimes} for each $\ell^s \parallel n$ with $\ell \not\in S_E$, we may assume $\Supp(n) \subset S_E$.
Let $d=\gcd(n,M)$. By assumption, the mod $n$ Galois representation is as large as possible given the image of the mod $d$ Galois representation. That is, for $n=p_1^{a_1} \cdots p_k^{a_k}$ and $d=p_1^{b_1} \cdots p_k^{b_k}$, we have
\begin{align*}
[k(E[n]):k(E[d])]&=\#\ker(\GL_2(\Z/n\Z) \rightarrow \GL_2(\Z/d\Z))\\
&=\frac{\# \GL_2(\Z/n\Z)}{\# \GL_2(\Z/d\Z)}\\
&=\displaystyle\prod_{p_i \mid d}p_i^{4(a_i-b_i)}\cdot \displaystyle\prod_{p_i \mid n, \,p_i\nmid{d}}p_i^{4(a_i-1)}(p_i^2-1)(p_i^2-p_i)
\end{align*}

Let $B(a)$ denote the subgroup of upper triangular matrices in $\GL_2(\Z/a\Z)$. Then
\begin{align*}
[k(E[n]):k(E[d])(\langle P \rangle)]&\leq\#\{M \in \GL_2(\Z/n\Z)\, |\, M \text{ upper triangular }, M \equiv I \pmod{d}\}\\
&=\# \ker(B(n) \rightarrow B(d))\\
&=\frac{\#B(n)}{\#B(d)}\\
&= \frac{\varphi(n)\cdot n \cdot \varphi(n)}{\varphi(d)\cdot d \cdot \varphi(d)}\\
&=\displaystyle \prod_{p_i \mid d} p_i^{3(a_i-b_i)} \cdot \displaystyle \prod_{p_i \mid n, p_i \nmid d} p_i^{3a_i-2}(p_i-1)^2.
\end{align*}
Comparing sizes shows that 
\begin{align*}
[k(E[d])(\langle P \rangle):k(E[d])]&\geq \displaystyle\prod_{p_i \mid n}p_i^{(a_i-b_i)}\cdot \displaystyle\prod_{p_i \mid n, \,p_i\nmid{d}}\left(1+\frac{1}{p_i}\right)\\
&=\deg(X_0(n) \rightarrow X_0(d)),
\end{align*}
and so $[k(\langle P \rangle):k(\langle \frac{n}{d}P \rangle)] \geq \deg(X_0(n) \rightarrow X_0(d))$. Since this latter extension has degree at most $\deg(X_0(n) \rightarrow X_0(d))$, equality holds. As $k(\langle P \rangle)=\Q(x)$ and $k(\langle  \frac{n}{d}P \rangle)=\Q(f(x))$, the result follows.

\end{appendices}

\bibliographystyle{amsplain}
\bibliography{references}

\end{document}